\documentclass[12pt]{amsart}
\textwidth=16cm
\hoffset=-2cm
\usepackage[T2A]{fontenc}
\usepackage{inputenc}
\usepackage{amsmath}
\usepackage{amsfonts,amssymb}
\usepackage{amsthm}
\usepackage[matrix,arrow,curve]{xy}
\numberwithin{equation}{section}
\theoremstyle{plain}
\newtheorem{theorem}{Theorem}[section]
\newtheorem{lemma}[theorem]{Lemma}
\newtheorem{predl}[theorem]{Proposition}

\theoremstyle{definition}
\newtheorem{definition}[theorem]{Definition}
\newtheorem{remark}[theorem]{Remark}
\newtheorem{example}[theorem]{Example}

\newcommand{\quot }{/\!\!/}

\newcommand{\N}{\mathbb N}
\newcommand{\Z}{\mathbb Z}
\newcommand{\Gm}{\mathbb G_m}

\renewcommand{\P}{\mathbb P}
\renewcommand{\AA}{\mathcal A}
\newcommand{\BB}{\mathcal B}
\newcommand{\CC}{\mathcal C}

\newcommand{\ZZ}{\mathcal Z}
\newcommand{\D}{\mathcal D}
\newcommand{\Dp}{\mathcal D^{\mathrm{\perf}}}
\newcommand{\EE}{\mathcal E}
\newcommand{\FF}{\mathcal F}

\newcommand{\LL}{\mathcal L}
\newcommand{\NN}{\mathcal N}
\newcommand{\TT}{\mathsf T}
\renewcommand{\O}{\mathcal O}

\renewcommand{\k}{\mathsf k}
\newcommand{\Mod}{\mathrm{{-}Mod}}
\newcommand{\mmod}{\mathrm{{-}mod}}

\newcommand{\coh}{\mathrm{coh}}
\newcommand{\qcoh}{\mathrm{qcoh}}
\newcommand{\perf}{\mathrm{perf}}
\newcommand{\Id}{\mathrm{Id}}
\newcommand{\Kern}{\mathrm{Kern}}

\newcommand{\ra}{\mathbin{\rightarrow}}

\newcommand{\xra}{\xrightarrow}
\renewcommand{\le}{\leqslant}
\renewcommand{\ge}{\geqslant}

\newcommand{\bul}{\bullet}

\def\a{\alpha}
\def\b{\beta}

\newcommand{\e}{\varepsilon}
\newcommand{\s}{\sigma}

\DeclareMathOperator{\Hom}{\textup{Hom}}

\DeclareMathOperator{\Tor}{\mathrm{Tor}} \DeclareMathOperator{\Spec}{\mathrm{Spec}}
 \DeclareMathOperator{\Ob}{\mathrm{Ob}}
 
\DeclareMathOperator{\repr}{\mathrm{rep}} 


\begin{document}

\title{Descent theory for semiorthogonal decompositions}
\author{└lexey ELAGIN}
\address{Institute for Information Transmission Problems (Kharkevich Institute)\\
National Research University Higher School of Economics}
\email{alexelagin@rambler.ru}
\thanks{This work was partially supported by 
RFBR {(grants nos. 09-01-00242-a, 10-01-93110-═╓═╚╦-a, 10-01-93113-═╓═╚╦-a, and 11-01-92613-╩╬-р)}, Russian Presidential grant ═╪-5139.2012.1, the Academic Fund of the National Research University ``Higher School of
Economics'' (grant no. 10-09-0015), and Dmitry Zimin's Foundation ``Dynasty''. The author was also partially supported by
AG Laboratory NRU-HSE, RF government 
grant, ag. 11.G34.31.0023}
\date{}

\begin{abstract}
\sloppy
In this paper a method of constructing a semiorthogonal decomposition of the derived category of 
$G$-equivariant sheaves on a variety $X$ is described, provided that the derived category of sheaves on $X$ admits a semiorthogonal decomposition, whose components are preserved by the action of the group $G$ on $X$. 
Using this method, semiorthogonal decompositions of equivariant derived categories were obtained for projective bundles and for blow-ups with a smooth center, and also for varieties with a full exceptional collection, preserved by the action of the group. 
As a main technical instrument, descent theory for derived categories is used.

Reference: 12 items.
\end{abstract}

\maketitle

\markright{Descent theory for semiorthogonal decompositions}

\sloppy
\section{Introduction}

This paper is devoted to studying the derived category of coherent sheaves on an algebraic variety, acted by an algebraic group. Its main aim is to construct  a semiorthogonal decomposition of this category.

We present a construction producing a semiorthogonal decomposition of the equivariant derived category starting from a semiorthogonal decomposition of the standard derived category. This problem can be naturally generalized to the following setup:
what is the relation between the derived categories of the base variety and the covering variety? In the case of the derived category of  $G$-equivariant sheaves on a variety 
$X$, acted   by a group~$G$, the role of the covering variety is played by $X$, and the role of the base is played by the stack $X\quot G$ (the quotient stack of $X$ over the action of the group $G$). This is why one should work in the category of stacks, not only algebraic varieties or schemes.

For a morphism of stacks $p\colon X\ra S$, there is a standard way to 
describe the category of sheaves on $S$ in terms of the category of sheaves on $X$. Namely, if the morphism $p$ is faithfully flat, then giving a sheaf on~$S$ is equivalent to giving a sheaf $F$ on $X$ with the following gluing data: an isomorphism $p_1^*F\ra p_2^*F$ on
$X\times_SX$, satisfying the cocycle condition. 
The similar result holds for an object of the derived category of sheaves on $S$ under the following splitting assumption: the natural morphism $\O_S\ra Rp_*\O_X$ is an embedding of the sheaf $\O_S$ as a direct summand. In other words, under this assumption the derived category of sheaves on the base $S$ is equivalent to a certain descent category, associated with the morphism $p\colon X\ra S$. 
Thus, the language of descent data can be used for understanding the relation between semiorthogonal decompositions of the derived categories of sheaves on the base and on the covering variety. For the morphism $X\to X\quot G$, the  splitting assumption 
means linear reductivity of the group $G$, i.e. that the category of linear representations of $G$ is semisimple.

The descent category, associated with the morphism $p\colon X\ra S$, has two equivalent definitions. The first one: it is the classic descent category $\D(X)/p$ formed by pairs, which consist of an object $F$ in the unbounded derived category $\D(X)$ of quasi-coherent sheaves on $X$ and of an isomorphism $p_1^*F\ra p_2^*F$, obeying the cocycle condition. The second definition is: the descent category  is the category $\D(X)_{\TT_p}$ of comodules over the comonad
$\TT_p=(T_p,\e,\delta)$ on the category $\D(X)$, where $\TT_p$ is the comonad  associated with the adjoint pair of functors $p^*$ and $p_*$. The language of comodules over  a comonad is sometimes more convenient: in terms of comodules over a comonad we formulate Theorem~\ref{th_sodt}, where a semiorthogonal decomposition of the descent category is constructed, provided that some semiorthogonal decomposition of the initial category is compatible in a certain sense with the functor $T_p$.

From Theorem~\ref{th_sodt} we deduce  our main results on the relation between derived categories of the base variety and the covering variety:  Theorems~\ref{th_sodschemesbig}, \ref{th_sodschemes} and \ref{th_sodschemescoh} for a covering of schemes and Theorems~\ref{th_sodforequivbig}--\ref{th_sodforequivcoh} for an  equivariant derived category. Assume that a semiorthogonal decomposition of the category $\Dp(X)$ of perfect complexes on  $X$ is preserved by an action of a linearly reductive group $G$ on $X$. Then Theorem~\ref{th_sodforequiv} produces a semiorthogonal decomposition of $\D^{\perf,G}(X)$, the category of 
$G$-equivariant perfect complexes on $X$. The components of this decomposition can be described in terms of descent theory. 

We provide applications of Theorem~\ref{th_sodforequiv} in the following situations: action of a group on the projectivization of an equivariant vector bundle and on the blow-up of a smooth subvariety. In both cases Theorem~\ref{th_sodforequiv} is applied to the semiorthogonal decompositions of the derived category of sheaves on the above varieties, constructed by D.\,Orlov. In these cases we describe explicitly the components of the resulting decompositions as certain equivariant derived categories.

Another important application Theorem~\ref{th_sodforequiv} has in  the case of an action 
of a linearly reductive group which preserves a full exceptional collection on the variety. This is the case of the simplest  semiorthogonal decomposition, invariant under the action of the group. In this case we also can describe explicitly 
(Theorem~\ref{th_sodforexcoll}) the components of the decomposition from Theorem~\ref{th_sodforequiv}. They are equivalent to the derived categories of representations of the group, twisted into certain cocycles. Earlier the similar result was obtained in~\cite{El_C1} under the assumption that the exceptional collection is formed by sheaves. Using descent theory allows to drop off this assumption.

The author is grateful to 
D.\,O.\,Orlov for his constant attention to this work and for useful discussions,  and to A.\,G.\,Kuznetsov for valuable remarks, refereeing this text and for his kind support.

\section{Preliminaries}
\label{section_pre}
All schemes in this paper are supposed to be quasi-projective over an arbitrary field $\k$.
By an algebraic group we understand an affine group scheme of finite type over $\k$. The structure morphism in a group $G$ we denote by $\mu\colon G\times G\ra G$, and the action of a group $G$ on a scheme $X$  by $a\colon G\times X\ra X$.
The projections of products $G\times X$, $G\times G\times X$,  etc onto factors we denote by $p_1$, $p_2$, $p_{12}$, etc.

We will consider three versions of derived category of sheaves on a scheme $X$: the unbounded derived category of quasi-coherent sheaves $\D(X)=\D(\qcoh(X))$, the bounded derived category of coherent sheaves $\D^b(X)=\D^b(\coh(X))$ and the category of perfect complexes $\D^{\perf}(X)$. The latter is by definition a full subcategory in $\D(X)$ formed by complexes that are locally quasi-isomorphic to a bounded complex of vector bundles. One has $\D^{\perf}(X)\subset \D^b(X)$ and for a smooth scheme one has $\D^{\perf}(X)=\D^b(X)$, see~\cite{BvdB}. Let $f\colon X\ra Y$ be a morphism of schemes, then by Spaltenstein the derived  pullback and pushforward functors between categories $\D(X)$ and $\D(Y)$ are well-defined (see~\cite{Sp}). These functors are adjoint, we denote them by $f^*$ and $f_*$ by abuse of notation. Note that $f^*$ sends $\D^{\perf}(Y)$ to
$\D^{\perf}(X)$, and for  a morphism~$f$ of finite $\mathrm{Tor}$-dimension (in particular,
for flat $f$) it sends $\D^b(Y)$ to $\D^b(X)$. If $f$ is proper,  then $f_*$ sends $\D^b(X)$ to $\D^b(Y)$.

\begin{definition}
An \emph{equivariant sheaf} on a scheme $X$ acted by a group~$G$ is by definition a pair of a sheaf $F$ and of an isomorphism $\theta\colon p_2^*F\ra a^*F$ of sheaves on $G\times X$ that satisfy on $G\times G\times X$ the cocycle condition: $(1\times a)^*\theta\circ p_{23}^*\theta=(\mu\times 1)^*\theta$.
A \emph{morphism} of equivariant sheaves from  $(F_1,\theta_1)$ to $(F_2,\theta_2)$ is by definition a morphism $f\colon F_1\ra F_2$ compatible with~$\theta$: that is,
$\theta_2\circ p_2^*f=a^*f\circ \theta_1$.
\end{definition}
Coherent (quasi-coherent) $G$-equivariant sheaves on a scheme $X$ form an abelian category $\coh^G(X)$ (resp. $\qcoh^G(X)$). We denote by $\D^G(X)=\D(\qcoh^G(X))$ the unbounded derived category of $G$-equivariant quasi-coherent sheaves on $X$, we denote by $\D^b(\coh^G(X))$ the bounded derived category of $G$-equivariant coherent sheaves on $X$. We denote by $\D^{\perf,G}(X)$ the category of $G$-equivariant perfect complexes, it is a full subcategory in $\D^G(X)$ formed by complexes that lie in $\D^{\perf}(X)$ after forgetting of the group action.

One can view  $G$-equivariant sheaves on a scheme $X$ as sheaves on the stack $X\quot G$, the quotient stack of $X$ over the action of $G$. Definition of $X\quot G$ can be found in~\cite{La}, under our hypotheses $X\quot G$ is an algebraic stack of finite type over $\k$.
There is a canonical flat morphism of stacks $p\colon X\ra X\quot G$, the pullback under $p$ is forgetting of the equivariant structure.
This approach allows one to describe the derived category of equivariant sheaves on a scheme in terms of the derived category of sheaves.
There are two ways of such description (which are essentially equivalent): using cosimplicial categories and using comonads. We present these ways below.

Let  $\Delta$ denote a category whose objects are sets $[1,\ldots,n], n\in\N$, and
whose  morphisms are non-decreasing maps between them.

By definition, a cosimplicial object in a category $\CC$ (for example, a cosimplicial set, a cosimplicial scheme, etc) is a functor from $\Delta$
to~$\CC$. If we take $\CC$ to be the 2-category of categories $\mathrm{Cats}$, we get a definition of cosimplicial category.

\begin{definition}
A \emph{cosimplicial category} is a covariant 2-functor $\Delta \ra \mathrm{Cats}$ into the 2-category of categories. More explicitly,
a cosimplicial category $\CC_{\bul}$ consists of the following data:
\begin{enumerate}
\item a family of categories $\CC_k$, $k=0,1,2,\ldots$, which correspond to objects of $\Delta$ (note that the category $\CC_k$ corresponds to the set $[1,\ldots,k+1]$);
\item a family of functors $P_f^*\colon\CC_m\ra \CC_n$, which correspond to morphisms in $\Delta$, that is, to non-decreasing maps
$f\colon [1,\ldots,m+1]\ra [1,\ldots,n+1]$;
\item a family of isomorphisms of functors $\epsilon_{f,g}\colon P_f^*P_g^*\ra P_{fg}^*$, which correspond to composable pairs of maps $f,g$.
\end{enumerate}
Isomorphisms in (3) should obey the cocycle condition: the diagram
$$\xymatrix{P_f^*P_g^*P_h^* \ar[r]^{\epsilon_{f,g}}\ar[d]^{\epsilon_{g,h}} & P_{fg}^*P_h^*\ar[d]^{\epsilon_{fg,h}}\\ P_f^*P_{gh}^* \ar[r]^{\epsilon_{f,gh}} & P_{fgh}^*}$$
is commutative for any composable triple of maps $f,g,h$.
\end{definition}

For us, the main example of a cosimplicial category is the following.
\begin{example}
\label{example_standard}
Suppose a morphism of schemes (or stacks) $X\ra S$ is given. Then the schemes (or stacks) $X,X\times_S X,X\times_S X\times_S X,\ldots$ and the morphisms
$$p_f\colon \underbrace{X\times_SX\times\ldots\times X}_{n\ \text{times}}\ra \underbrace{X\times_SX\times\ldots\times X}_{m\ \text{times}}$$
between them given by the rule
$$p_f(x_1,\ldots,x_n)=(x_{f(1)},\ldots,x_{f(m)})$$
for $f\in \Hom_{\Delta}([1,\ldots,m],[1,\ldots,n])$,
form a simplicial scheme (or stack). And the categories of sheaves (abelian categories of coherent, quasi-coherent sheaves or corresponding derived categories)
on these schemes (stacks) and pullback functors between them form a cosimplicial category.
\end{example}
We will denote such cosimplicial categories by
$$[\D(X),\D(X\times_SX),\D(X\times_SX\times_SX),\ldots,p_f^*].$$
It is natural to use for functors $P_f^*$ the notation, reminding of pullback functors for the categories of sheaves.
For instance, we denote by $P_{13}^*$ the functor $P_{f}^*\colon \CC_1\ra \CC_2$ for the map $f\colon [1,2]\ra [1,2,3]$ such that $f(1)=1$, $f(2)=3$.

For any cosimplicial category $\CC_{\bul}=[\CC_{0}, \CC_1,
\ldots, P_{\bul}^*]$, the descent category $\Kern(\CC_{\bul})$ is defined, see~\cite[19.3]{KSh}.

\begin{definition}[{Classic descent category}]
\label{def_dxbar} An \emph{object} of $\Kern(\CC_{\bul})$ is a pair 
$(F,\theta)$, where $F\in \Ob  \CC_0$ and  $\theta$ is an isomorphism $P_1^*F\ra
P_2^*F$ satisfying the cocycle condition: the following diagram is commutative $$\xymatrix{P_{12}^*P_1^*F \ar[rd]_{P_{12}^*\theta}&P_{13}^*P_1^*F \ar[r]^{P_{13}^*\theta} \ar@{-}[l]_{\sim} &
P_{13}^*P_2^*F \ar@{-}[rd]^{\sim}&\\
&P_{12}^*P_2^*F \ar@{-}[r]_{\sim} & P_{23}^*P_1^*F \ar[r]_{P_{23}^*\theta}&  P_{23}^*P_2^*F.}$$
In this diagram lines with $\sim$ denote functorial isomorphisms from the definition of a cosimplicial category.
A \emph{morphism} in $\Kern(\CC_{\bul})$ from $(F_1,\theta_1)$ to
$(F_2,\theta_2)$ is a morphism $f\in \Hom_{\CC_0}(F_1,F_2)$ such that 
$P_2^*f\circ \theta_1=\theta_2\circ P_1^*f$.
\end{definition}

For the canonical morphism of stacks $X\ra X\quot G$, the simplicial stack from Example~\ref{example_standard} is actually a simplicial scheme and has the form 
\begin{equation}
\label{equation_X/G}
[X_0,X_1,X_2,\ldots,p_{\bul}]=[X,G\times X,G\times G\times X,\ldots,p_{\bul}],
\end{equation}
where the morphisms $p_{\bul}$ are defined as follows. For a non-decreasing map $f\colon [1,\ldots,m]\ra [1,\ldots,n]$ the morphism of schemes 
$$p_f\colon \underbrace{G\times \ldots\times G\times X}_{n\ \text{times}}\ra
\underbrace{G\times \ldots\times G\times X}_{m\ \text{times}}$$
is given by the rule
$$(g_n,\ldots,g_2,x_1)\mapsto (g_{f(m)}\cdot\ldots\cdot g_{f(m-1)+1},\ldots,
g_{f(2)}\cdot\ldots\cdot g_{f(1)+1},g_{f(1)}\cdot\ldots\cdot g_2x_1).$$
For small $n=m\pm 1$ and strictly increasing $f$ the morphisms $p_{\bul}$
have the form 
$$\xymatrix{
G\times G\times X \ar@<-10mm>[rr]^-{p_{23}} \ar[rr]^-{\mu\times 1} \ar@<10mm>[rr]^-{1\times a} &&
G\times X \ar@<-5mm>[rr]^-{p_2} \ar@<5mm>[rr]^-{a}\ar@<5mm>[ll]_-{e\times 1}
\ar@<-5mm>[ll]_-{p_1\times e\times p_2}&& X \ar[ll]_-{e\times 1}}.$$

By the definition, for a cosimplicial category $$[\qcoh(X),\qcoh(G\times X),\qcoh(G\times G\times X),\ldots,p^*],$$
formed by abelian categories of quasi-coherent sheaves on~(\ref{equation_X/G}),
the descent category $\Kern$ is exactly the category of $G$-equivariant quasi-coherent sheaves on $X$.
We will see below that for a linearly reductive group $G$
the descent category 
\begin{equation}
\label{equation_DX/G}
\D(X)^G=\Kern([\D(X),\D(G\times X),\D(G\times G\times X),\ldots,p^*])
\end{equation}
is equivalent to the derived category of equivariant sheaves $\D^G(X)$.
Informally, taking descent category commutes with taking derived category.

\bigskip
Now we recall necessary definitions and facts concerning comonads and comodules over a comonad. More information can be found in the books of Barr-Wells~\cite[ch. 3]{TTT} and MacLane~\cite[ch.
6]{ML}.

Let $\CC$ be a category.
\begin{definition}
\label{def_comonad}
A \emph{comonad} $\TT=(T,\e,\delta)$ on the category $\CC$ consists of a functor
$T\colon \CC\ra \CC$ and of two natural transformations of functors 
$\e\colon T\ra \Id_{\CC}$ and $\delta\colon T\ra T^2=TT$ such that the diagrams
$$\xymatrix{
T \ar[r]^{\delta} \ar@{=}[rd] \ar[d]^{\delta} & T^2 \ar[d]^{T\e} \\
T^2 \ar[r]^{\e T} & T, }\qquad
\xymatrix{T \ar[r]^{\delta} \ar[d]^{\delta} & T^2 \ar[d]^{T\delta}\\
T^2 \ar[r]^{\delta T} & T^3 }
$$
commute.
\end{definition}

The main (and, essentially, the only) example of a comonad is the following
\begin{example}
\label{mainexample} Consider a pair of adjoint functors 
$P^*
\colon\BB\ra \CC$ (left) and $P_*\colon \CC\ra \BB$ (right).
Let $\eta\colon \Id_{\BB}\ra P_*P^*$ and $\e\colon P^*P_*\ra \Id_{\CC}$ 
be the natural adjunction morphisms. Define a triple 
$(T,\e,\delta)$: take $T=P^*P_*$, take 
$\e\colon P^*P_*\ra \Id_{\CC}$ and $\delta=P^*\eta P_* \colon P^*P_*\ra
P^*P_*P^*P_*$.
Then $\TT=(T,\e,\delta)$ is a comonad on the category~$\CC$.
\end{example}

In fact, any comonad can be obtained from an adjoint pair in this way. It follows from the construction due to Eilenberg and Moore.

\begin{definition}
\label{def_comodule}
Suppose $\TT=(T,\e,\delta)$ is a comonad on the category~$\CC$.
A \emph{comodule} over~$\TT$ (or a \emph{$\TT$-coalgebra}) is a pair 
$(F,h)$, where  $F\in \Ob  \CC$ and $h\colon F\ra TF$ is a morphism satisfying two conditions: the composition
$$F\xra{h} TF \xra{\e F} F$$
is the identity, the diagram 
$$\xymatrix{
F \ar[r]^h \ar[d]^h  &  TF \ar[d]^{Th} \\
TF \ar[r]^{\delta F} & T^2F }$$ is commutative.
A \emph{morphism} between comodules
$(F_1,h_1)$ and $(F_2,h_2)$ is a morphism 
$f\colon F_1\ra F_2$ in the category $\CC$ such that $h_2\circ f=Tf\circ h_1$.
\end{definition}

Comodules over a fixed comonad $\TT$ on $\CC$ form a category, which is denoted by $\CC_{\TT}$. Define $P^*\colon \CC_{\TT}\to \CC$ to be the forgetful functor: 
$(F,h)\mapsto F$. Define a functor 
$P_*\colon \CC\to \CC_{\TT}$ by the rule $F\mapsto (TF,\delta F)$. 
It can be shown (see~\cite[section 3.2]{TTT}) that the functors $P^*$ and $P_*$ 
are adjoint and that the comonad~$\TT$ is isomorphic to the comonad, constructed from the adjoint pair $P^*,P_*$.

Also we will need more general definition:
\begin{definition}
\label{def_dxt'}
Let $\TT$ be a comonad on $\CC$ and  $\CC'\subset\CC$ be a subcategory. 
Define
$\CC'_{\TT}$ to be the full subcategory in the category of comodules over $\TT$
formed by comodules $(F,h)$ such that $F\in \CC'$.
\end{definition}

Consider the commutative square in the category  $\Delta$
$$\xymatrix{[1,\ldots,m+n-r]&[1,\ldots,n] \ar[l]_-{f'}\\
[1,\ldots,m] \ar[u]_{g'} & [1,\ldots,r], \ar[l]_{f}
\ar[u]_g}.$$
\begin{definition}
If maps $f$ and $f'$ are injective and $[1,\ldots,m+n-r]=\mathrm{Im}\ f'\cup \mathrm{Im}\ g'$, then this square is called \emph{exact Cartesian}.
\end{definition}

\begin{definition}[see~\cite{El_C4}, section 2]
\label{def_basechangecat}
Let $[\CC_0,\CC_1,\ldots,P^*_{\bul}]$  be a cosimplicial category.
Suppose that 
\begin{enumerate}
\item the  functors $P_f^*$ have right 
adjoint functors, denote them by $P_{f*}$;
\item for any exact Cartesian square in $\Delta$
the natural base change morphism 
$$P_f^*P_{g*}\ra P_{g'*}P_{f'}^*$$
is a functor isomorphism.
\end{enumerate}
Then $[\CC_0,\CC_1,\ldots,P^*_{\bul}]$  is called
a \emph{cosimplicial category with base change}.
\end{definition}
Condition (2) above is 
an axiomatization of the flat base change theorem.

Cosimplicial categories formed by categories of sheaves (either abelian or derived) from Example~\ref{example_standard} are cosimplicial categories with base change.

For any cosimplicial category with base change $\CC_{\bul}=[\CC_0,\CC_1,\ldots,P_{\bul}^*]$  one can construct a comonad 
on the category $\CC_0$. Let $P^*\colon \Kern(\CC_{\bul})\ra\CC_0$ be the forgetful functor, which assigns the object $F$ to a pair $(F,\theta)$.
By~\cite[prop. 2.9]{El_C4} the functor $P^*$ has a right adjoint functor $P_*$.
\begin{definition}
\label{def_simpl>comonad}
Define $\TT_{\CC_{\bul}}=(P^*P_*,\e,\delta)$ to be the comonad on $\CC_0$
associated with the adjoint pair $(P^*,P_*)$.
\end{definition}
For cosimplicial categories from Example~\ref{example_standard}, 
this comonad coincides with the comonad associated with the adjoint pair of pullback and pushforward functors between the categories of sheaves on $X$ and on~$S$.

\begin{definition}[comonad descent category]
For a cosimplicial category with base change $\CC_{\bul}$, let 
$\TT_{\CC_{\bul}}$ be the comonad on
$\CC_0$, associated with~$\CC_{\bul}$ as above.
Define $\CC_{\bul\TT}$ to be the category of comodules over $\TT_{\CC_{\bul}}$.
\end{definition}
\begin{predl}[{see \cite{El_C4}, prop. 4.2}]
\label{prop_simpl>comonad}
For any cosimplicial category with base change $\CC_{\bul}$ 
the descent categories $\Kern(\CC_{\bul})$ and $\CC_{\bul\TT}$ are equivalent.
\end{predl}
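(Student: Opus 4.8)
The plan is to construct a pair of functors relating $\Kern(\CC_{\bul})$ and $\CC_{\bul\TT}$ and to check they are mutually quasi-inverse equivalences. By construction $\TT_{\CC_{\bul}}$ is the comonad attached to the adjoint pair $(P^*,P_*)$, where $P^*\colon \Kern(\CC_{\bul})\ra\CC_0$ is the forgetful functor and $P_*$ its right adjoint (which exists by \cite[prop.~2.9]{El_C4}). The Eilenberg--Moore construction, recalled in the excerpt, produces from any adjoint pair a canonical \emph{comparison functor} from the source of the right adjoint to the category of comodules over the induced comonad. Applying this here, I would first write down the comparison functor $K\colon \Kern(\CC_{\bul})\ra \CC_{\bul\TT}$, which sends a descent datum $(F,\theta)$ to the comodule $(P^*(F,\theta), P^*\eta_{(F,\theta)})=\bigl(F,\ P^*\eta_{(F,\theta)}\bigr)$, where $\eta$ is the unit of $(P^*,P_*)$. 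The entire content of the proposition is that this comparison functor is an equivalence, i.e. that the adjunction $(P^*,P_*)$ is \emph{comonadic}.

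The second step is to build a candidate quasi-inverse $L\colon \CC_{\bul\TT}\ra\Kern(\CC_{\bul})$. The idea is that the comonad structure morphism $h\colon F\ra T_{\CC_{\bul}}F=P^*P_*F$, together with the explicit description of $P_*$ coming from the cosimplicial structure, should let one extract a gluing isomorphism $\theta\colon P_1^*F\ra P_2^*F$. Concretely, I expect $P_*F$ to be computed (as in Example~\ref{example_standard} and the base-change axiom) via a limit over the cosimplicial diagram, so that a comodule structure on $F$ unwinds, using the exact Cartesian squares and the base-change isomorphisms of Definition~\ref{def_basechangecat}(2), precisely into the datum of an isomorphism $\theta$ satisfying the cocycle condition of Definition~\ref{def_dxbar}. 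Verifying that the cocycle/counit axioms of Definition~\ref{def_comodule} translate term-by-term into the cocycle condition for $\theta$ and into $\theta$ being an isomorphism is a bookkeeping computation that I would carry out by tracking each face map $P_1^*,P_2^*,P_{12}^*,P_{13}^*,P_{23}^*$ through the base-change identifications.

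The third step is to check $K\circ L\cong \Id$ and $L\circ K\cong \Id$. Both reduce to the observation that the data "$\theta$" and "$h$" are reconstructed from one another by the mutually inverse manipulations in step two, and that morphisms on both sides are exactly morphisms $f$ in $\CC_0$ commuting with the respective extra structure, so the functors are fully faithful and essentially surjective by the same computation. Alternatively, one can avoid the explicit $L$ entirely and instead invoke a comonadic form of the Beck monadicity theorem: it suffices to check that $P^*\colon\Kern(\CC_{\bul})\ra\CC_0$ has a right adjoint (done, \cite[prop.~2.9]{El_C4}), reflects isomorphisms, and that $\Kern(\CC_{\bul})$ has, and $P^*$ preserves, equalizers of $P^*$-split pairs; each of these is straightforward from the fact that limits in $\Kern(\CC_{\bul})$ are computed on the underlying object $F\in\CC_0$ with $\theta$ carried along.

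The main obstacle will be step two: making precise the claim that a $\TT_{\CC_{\bul}}$-comodule structure on $F$ is the "same data" as a descent isomorphism $\theta$. This is where the base-change axiom (Definition~\ref{def_basechangecat}(2)) does the real work — one must use the exact Cartesian squares in $\Delta$ to identify the various composites $P_f^*P_{g*}$ with $P_{g'*}P_{f'}^*$ and thereby see that $P^*P_*F$ carries the two pullbacks $P_1^*F$ and $P_2^*F$ as recognizable summands/factors of the relevant (co)limit, so that $h$ becomes an isomorphism between them. Once this identification is set up carefully, matching the counit and coassociativity diagrams of Definition~\ref{def_comodule} with the triangle and the cocycle pentagon of Definition~\ref{def_dxbar} is routine diagram-chasing; I would relegate that verification to the cited companion paper \cite{El_C4} rather than reproduce it in full.
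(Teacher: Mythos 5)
The paper offers no proof of this proposition: it is imported verbatim from \cite{El_C4} (Prop.~4.2), so there is no in-text argument to compare yours against. Your outline is exactly the strategy of the cited proof (and of the classical B\'enabou--Roubaud/Beck comonadicity argument): the Eilenberg--Moore comparison functor in one direction, and in the other direction the transposition of a comodule structure $h\colon F\to P^*P_*F=P_{1*}P_2^*F$ through the adjunction and the base-change isomorphisms of Definition~\ref{def_basechangecat}(2) into a gluing map $\theta\colon P_1^*F\to P_2^*F$; your fallback via the dual Beck theorem also goes through, since $P^*$ reflects isomorphisms and $P^*$-split equalizers are absolute and lift to $\Kern(\CC_{\bul})$. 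The one point I would not file under ``bookkeeping'' is the invertibility of the transposed $\theta$: it is not part of the comodule data and must be extracted by combining the counit axiom with the cocycle identity restricted along a suitable degeneracy (this is precisely what ``$P^*$ reflects isomorphisms'' is doing for you in the Beck formulation), so it deserves an explicit sentence rather than being absorbed into the term-by-term translation.
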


Clearly, if $\TT=(T,\e,\delta)$ is a comonad on an abelian category $\CC$ and the functor $T$ is exact then the category of comodules $\CC_{\TT}$ is also abelian.
But it is not clear why the category~$\CC_{\TT}$ should be triangulated if $\CC$ 
is a triangulated category, $\TT=(T,\e,\delta)$ is a comonad on $\CC$ and the functor $T$ is exact.                                                                   There is a good candidate of triangulated structure on $\CC_{\TT}$:
\begin{definition}
\label{def_quasitriang}
Suppose $\CC$ is a triangulated category and $\TT=(T,\e,\delta)$ is a comonad on~$\CC$. Define shift functor on $\CC_{\TT}$ by formulas
$(F,h)[1]=(F[1],h[1])$, $f[1]=f[1]$. Say that a triangle $(F',h')\ra (F,h)\ra (F'',h'')\ra (F',h')[1]$ in $\CC_{\TT}$ is \emph{distinguished} if the triangle 
$F'\ra F\ra F''\ra F'[1]$ is distinguished in $\CC$.
\end{definition}
Since taking cones is not functorial, we cannot check without additional assumptions that any morphism in
$\CC_{\TT}$ fits into a distinguished triangle.
But later we will see that in some interesting situations the above definition does introduce a triangulated structure on $\CC_{\TT}$.

\medskip
Suppose $p\colon X\ra S$ is a morphism of schemes and 
$$\CC_{\bul}=[\D(X),\D(X\times_SX),\ldots,p_{\bul}^*]$$
is the cosimplicial category formed by derived categories of quasi-coherent sheaves from Example~\ref{example_standard}. Denote by $\TT_p$ the corresponding comonad on $\D(X)$. There is a canonical functor 
$$\Phi\colon \D(S)\ra \D(X)_{\TT_p},$$
called a \emph{comparison functor}, which sends a complex $H$ to the pair  $(p^*H,h)$, where $h\colon p^*H\to p^*p_*p^*H$ is the canonical adjunction morphism.

\medskip
Now we state the main theorems about equivalence between derived descent category and derived category of the base scheme.

\begin{theorem}[{see~\cite[th.~7.3]{El_C4}}]
\label{th_descentfor}
Suppose that for a morphism $p\colon X\ra S$ of quasi-projective schemes over a field the canonical map $\O_S\to Rp_*\O_X$ is a split embedding. Then
the comparison functors $$\Phi\colon \D(S)\ra \D(X)_{\TT_p},\qquad
\Phi\colon \D^{\perf}(S)\ra \D^{\perf}(X)_{\TT_p}, \qquad
\Phi\colon \D^b(S)\ra \D^b(X)_{\TT_p}$$
are equivalences.
\end{theorem}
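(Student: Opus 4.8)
The plan is to prove the three equivalences simultaneously by first establishing the statement for the unbounded category $\D(S)$ and then cutting it down to $\D^{\perf}$ and $\D^b$ using boundedness arguments. The starting point is the observation that the splitting hypothesis $\O_S\to Rp_*\O_X$ being a split embedding is exactly a \emph{Beck-type} condition: it forces the comparison functor $\Phi\colon\D(S)\to\D(X)_{\TT_p}$ to be both faithful (because a complex $H$ can be recovered from $p^*H$ as a direct summand of $p_*p^*H=H\otimes Rp_*\O_X$, using the projection formula) and full. So the first step is to verify the projection formula $p_*p^*H\cong H\otimes^{\mathbf L}_{\O_S} Rp_*\O_X$ for $H\in\D(S)$, and then split off $H$ as a direct summand of $T_p$-applied-to-$H$'s pushforward; this yields that $\Phi$ is fully faithful on $\D(S)$.

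The second step, the heart of the argument, is essential surjectivity of $\Phi\colon\D(S)\to\D(X)_{\TT_p}$. Here I would invoke the Barr--Beck / comonadicity machinery in its triangulated (Lurie-style, or the version available in \cite{El_C4}) incarnation: a comodule $(E,h)$ over $\TT_p$ should be reconstructed as the totalization (homotopy limit) of the cosimplicial object obtained by iterating $p_*p^*$ on $E$, i.e. $E$ glues to a complex on $S$ because the cosimplicial category $\CC_{\bul}$ has base change and the splitting assumption guarantees that the relevant cosimplicial diagram has a limit computed by the usual alternating-sum complex. Concretely one forms the ``Čech'' complex $p_*E\to p_*p^*p_*E\to\cdots$ in $\D(S)$, takes its totalization $H$, and checks — using flat base change (condition (2) of Definition~\ref{def_basechangecat}) and the split surjection $Rp_*\O_X\to\O_S$ — that $\Phi(H)\cong(E,h)$. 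I expect this to be the main obstacle: in a general triangulated setting totalizations of cosimplicial objects need not exist or need not be functorial, so one must either work with a model/enhancement or exploit the fact that the split embedding truncates the cosimplicial diagram so that only finitely many terms matter, making the homotopy limit reduce to a finite limit (a genuine cone), which \emph{is} available in any triangulated category. This finiteness-via-splitting trick is precisely what \cite[th.~7.3]{El_C4} supplies, so the bulk of this step is citing and unpacking that result.

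The third step is to restrict to the perfect and bounded subcategories. For perfect complexes: $\Phi$ manifestly sends $\D^{\perf}(S)$ into $\D^{\perf}(X)_{\TT_p}$ since $p^*$ preserves perfectness; conversely, if $(E,h)\in\D^{\perf}(X)_{\TT_p}$ and $H=\Phi^{-1}(E,h)$, then $H$ is a direct summand of $p_*E$, and $p_*E=E\otimes^{\mathbf L} Rp_*\O_X$ is a bounded complex with coherent-in-each-degree cohomology over $S$ (using properness/finiteness of $Rp_*$ on $\D^b$) that is locally of finite Tor-dimension — hence $H$, being a summand of a perfect complex on a quasi-projective scheme, is perfect. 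The bounded-coherent case is the same argument with ``perfect'' replaced by ``bounded with coherent cohomology'': $p^*$ is exact (or of finite Tor-dimension) so preserves $\D^b$, and on the other side $H$ is a summand of the bounded complex $p_*E$. In both cases the only subtlety is the descent of the finiteness property along the faithfully-split map $\O_S\to Rp_*\O_X$, which is a direct-summand argument, so no real obstacle remains once the unbounded equivalence is in hand.

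Finally I would remark that Proposition~\ref{prop_simpl>comonad} lets one phrase all of this equivalently in terms of the classical descent category $\Kern(\CC_{\bul})$, which is how the result will be used in the sequel; this requires no extra work beyond transporting the equivalence $\Phi$ along the equivalence $\Kern(\CC_{\bul})\simeq\CC_{\bul\TT}=\D(X)_{\TT_p}$.
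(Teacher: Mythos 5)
The paper does not prove Theorem~\ref{th_descentfor} at all: it is imported verbatim from \cite[th.~7.3]{El_C4}, and since your proposal ultimately defers the one genuinely hard step (essential surjectivity of $\Phi$, i.e.\ producing the descended object as a limit of the cobar construction) to that same reference, your route and the paper's coincide where it matters. Two cautions about the parts you do spell out, since they are your own reconstruction rather than anything checkable against this text: the formula $p_*E\cong E\otimes^{\mathbf L}Rp_*\O_X$ does not typecheck for a general $E\in\D(X)$ (you mean $p_*p^*H\cong H\otimes^{\mathbf L}Rp_*\O_X$), and fullness of $\Phi$ is not an immediate consequence of $H$ being a direct summand of $p_*p^*H$ --- one needs the retraction $\rho_H\colon p_*p^*H\to H$ coming from the chosen splitting of $\O_S\to Rp_*\O_X$ to be natural in $H$, and then must verify $p^*\bigl(\rho_{H_2}\circ p_*f\circ\eta_{H_1}\bigr)=f$ using the comodule compatibility of $f$. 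Likewise, your claim that the splitting ``truncates the cosimplicial diagram to finitely many terms'' is a guess; the more standard mechanism is that the splitting furnishes extra (co)degeneracies making the relevant cosimplicial object split, so its limit is an absolute limit available in any category, and your boundedness/perfectness descent in the last step quietly uses flatness and properness of $p$, which are not among the stated hypotheses. None of this contradicts the present paper, which treats the theorem as a black box.
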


To formulate another result we need to recall
\begin{definition}[{see~\cite[def.~1.4]{MF}}]
A group scheme $G$ over a field~$\k$ is \emph{linearly reductive} if
the category of finite dimensional representations of $G$ over $\k$ is semisimple.
\end{definition}

\begin{theorem}[{see~\cite[th.~9.6]{El_C4}}]
\label{th_descentfor2}
Suppose that a linearly reductive group scheme $G$ of finite type over a field $\k$ acts on a quasi-projective scheme $X$ over $\k$. Then the comparison functors $$\Phi\colon \D^G(X)\ra \D(X)_{\TT_G},\quad
\Phi\colon \D^{\perf,G}(X)\ra \D^{\perf}(X)_{\TT_G},\quad
\Phi\colon \D^b(\coh^G(X))\ra \D^b(X)_{\TT_G}$$
are equivalences (here $\TT_G$ is a comonad on $\D(X)$, associated with
(\ref{equation_DX/G})).
\end{theorem}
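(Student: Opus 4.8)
The plan is to deduce Theorem~\ref{th_descentfor2} from Theorem~\ref{th_descentfor} by factoring the canonical morphism of stacks $p\colon X\ra X\quot G$ through a suitable atlas and reducing the stacky statement to a statement about schemes. Recall that $X\quot G$ is an algebraic stack of finite type over $\k$ and $p$ is a flat morphism; the cosimplicial category attached to $p$ in Example~\ref{example_standard} is exactly~(\ref{equation_X/G}), so the comonad $\TT_G$ on $\D(X)$ is the one associated with the adjoint pair $(p^*,p_*)$ between $\D(X\quot G)=\D^G(X)$ and $\D(X)$. Thus the comparison functor $\Phi\colon\D^G(X)\to\D(X)_{\TT_G}$ of the theorem is literally the comparison functor of the general formalism applied to the flat morphism $p\colon X\to X\quot G$. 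Since Theorem~\ref{th_descentfor} is stated for schemes, the first task is to extend its hypothesis and conclusion to the morphism $p$; the second task is to verify that hypothesis under the linear reductivity assumption.

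First I would address the splitting hypothesis. One must check that the canonical map $\O_{X\quot G}\to Rp_*\O_X$ is a split embedding in $\D(X\quot G)=\D^G(X)$. By adjunction and the projection formula this amounts to the claim that, for a linearly reductive $G$, the functor $Rp_*$ carries $\O_X$ to $\O_{X\quot G}$ together with a complement, equivalently that the "averaging" / invariants functor $(-)^G$ is exact and splits off the trivial summand. Concretely, $Rp_*\O_X$ computes equivariant cohomology, and for $G$ linearly reductive the higher equivariant cohomology of $X$ with coefficients in the equivariant structure sheaf agrees with the ordinary cohomology of $X$, so that $Rp_*\O_X\cong p_*\O_X$ and the natural map $\O_{X\quot G}\to p_*\O_X$ is the inclusion of $G$-invariants, which is split because the category of representations is semisimple. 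I would extract this from~\cite{El_C4} if it is proved there; otherwise I would verify it by descent, reducing to the affine case where $p_*\O_X=\O(X)$ as a $G$-representation, which decomposes as $\O(X)^G\oplus(\text{complement})$ functorially in $X$ by Reynolds-operator / semisimplicity considerations.

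Next I would handle the passage from schemes to the stack $X\quot G$. Two routes are available. The clean route is to observe that Theorem~\ref{th_descentfor}, although phrased for schemes in the excerpt, is really a statement about a morphism for which the splitting holds, and its proof in~\cite{El_C4} (\cite[th.~7.3]{El_C4}) is expected to work verbatim for a flat morphism of quasi-projective algebraic stacks once the splitting $\O_S\to Rp_*\O_X$ is given; so one invokes that generality with $S=X\quot G$. The alternative, self-contained route is to choose a smooth affine atlas $U\to X\quot G$ (which exists since $X\quot G$ is of finite type over $\k$ with $X$ quasi-projective; e.g. take $U$ related to a presentation of $X$ together with a faithful representation of $G$), form the fibre product $V=U\times_{X\quot G}X$ which is an honest quasi-projective scheme, and note that $V\to X$ and $V\to U$ are faithfully flat morphisms of schemes. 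One then compares the comonad $\TT_G$ on $\D(X)$ with the comonad associated with $V\to X$ (or with $V\to U$) using the base-change axioms of Definition~\ref{def_basechangecat}, applies Theorem~\ref{th_descentfor} on the scheme level, and descends the resulting equivalence back along the atlas, checking that the splitting hypothesis is inherited after base change. I expect the main obstacle to be precisely this comparison: verifying that the derived pushforward $p_*$ along the stacky morphism $p\colon X\to X\quot G$ behaves well (finite cohomological dimension, compatibility with base change, and the splitting), i.e. that "derived invariants" for a linearly reductive group are as tame as ordinary pushforward. Once that is in hand, the bounded and perfect versions follow formally: $p^*$ preserves perfect complexes and, since the splitting gives $\D^b(X\quot G)$ and $\D^{\perf}(X\quot G)$ as full triangulated subcategories of $\D(X\quot G)$ cut out by membership after forgetting equivariance, the three comparison functors in the statement are simultaneously equivalences, and the identification $\D(X\quot G)\simeq\D^G(X)$, $\D^{\perf}(X\quot G)\simeq\D^{\perf,G}(X)$, $\D^b(\coh(X\quot G))\simeq\D^b(\coh^G(X))$ together with Theorem~\ref{th_descentfor} yields the claim.
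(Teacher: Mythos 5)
The paper does not actually prove Theorem~\ref{th_descentfor2}: it is imported verbatim from~\cite[th.~9.6]{El_C4}, just as Theorem~\ref{th_descentfor} is imported from~\cite[th.~7.3]{El_C4}, so your task was to reconstruct the argument of that reference. Measured against that, your sketch has one essentially correct ingredient and one genuine gap. The correct ingredient is the splitting. Since $G$ is affine, $p\colon X\to X\quot G$ is an affine morphism, so $Rp_*\O_X=p_*\O_X$ automatically \textemdash{} this has nothing to do with equivariant cohomology agreeing with ordinary cohomology, contrary to your phrasing. Under $\qcoh(X\quot G)\cong\qcoh^G(X)$ the object $p_*\O_X$ is $\O(G)\otimes_\k\O_X$ carrying the regular representation, the canonical map from $\O_{X\quot G}$ is induced by the unit $\k\to\O(G)$, and a splitting is an invariant integral $\O(G)\to\k$, which exists precisely because $G$ is linearly reductive. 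You arrive at the right conclusion here, though by a somewhat muddled route.

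The gap is that everything after this is deferred. Your ``clean route'' \textemdash{} that the proof of Theorem~\ref{th_descentfor} works verbatim for the flat morphism of stacks $p\colon X\to X\quot G$ \textemdash{} is not a reduction but a restatement of what has to be proved: the content of~\cite[th.~9.6]{El_C4} is exactly that the split-comonad descent machinery (full faithfulness of $\Phi$ via the retraction supplied by the splitting, essential surjectivity via exhibiting every comodule as a direct summand of a free comodule $P_*F$) applies to the simplicial scheme $[X,G\times X,G\times G\times X,\ldots]$ and identifies $\D(\qcoh^G(X))$, the derived category of the \emph{abelian} descent category, with the descent category $\D(X)_{\TT_G}$ of the \emph{derived} category; none of that argument appears in your sketch. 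Your ``alternative route'' via an atlas would actually fail: the natural flat covering of $X\quot G$ is $X$ itself, and applying Theorem~\ref{th_descentfor} to $V=U\times_{X\quot G}X\to U$ for some other atlas $U$ proves descent along a different morphism of schemes; it says nothing about the comonad $\TT_G=p^*p_*$ on $\D(X)$ or about $\D^G(X)$, and there is no scheme-level quotient to descend to when the action is not free (if it were free, $X\quot G$ would be a scheme and the statement would already be an instance of Theorem~\ref{th_descentfor}). Your closing remark \textemdash{} that the perfect and bounded versions follow formally from the unbounded one because both sides are full subcategories cut out by a condition on the underlying object of $\D(X)$ \textemdash{} is fine.
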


The equivalence between $\D(X)_{\TT_p}$ and the triangulated category
$\D(S)$ obtained above allows to carry triangulated structure from $\D(S)$ 
to $\D(X)_{\TT_p}$, the same for $\D^{\perf}(X)_{\TT_p}$ and $\D^b(X)_{\TT_p}$. 
The resulting triangulated structure on $\D(X)_{\TT_p}$ coincides with the one
from Definition~\ref{def_quasitriang}, see~\cite[prop. 3.13]{El_C4}.

The operation of taking  descent category is functorial. 
To say it formally, a notion of ``morphism'' for cosimplicial categories and for categories with a comonad is needed.

\begin{definition}
\label{def_functorcosimpl}
A \emph{functor between cosimplicial categories} $\CC_{\bul}^{(1)}$ and $\CC_{\bul}^{(2)}$ consists of a family of functors     $$\Psi_k\colon
\CC_k^{(1)}\ra \CC_k^{(2)},\qquad k=0,1,\ldots$$
and a family of isomorphisms of functors $$\b_f\colon \Psi_n\circ P_f^{(1)*}\xra{\sim} P_f^{(2)*}\circ \Psi_m,$$ parametrized by  maps 
$f\colon [1,\ldots,m+1]\ra[1,\ldots,n+1]$ in $\Delta$. The isomorphisms should be  compatible with the composition of maps in~$\Delta$.
\end{definition}

\begin{definition}
Let $\CC_i, i=1,2$, be two categories and $\TT_i=(T_i,\e_i,\delta_i)$ be two comonads on them.  We say that a functor $\Psi\colon \CC_1\ra\CC_2$ is \emph{compatible} with $\TT_1$ and
$\TT_2$ if there exists an isomorphisms of functors $\b\colon\Psi T_1\ra T_2\Psi$ such that the diagrams 
$$\xymatrix{
\Psi T_1\ar[rr]^{\b} \ar[rd]_{\Psi\e_{1}}&& T_2\Psi \ar[ld]^{\e_2\Psi} \\ &\Psi,&
}\qquad\xymatrix{ \Psi T_1 \ar[rr]^{\b} \ar[d]_{\Psi\delta_1}&& T_2\Psi
\ar[d]^{\delta_2\Psi}\\ \Psi T_1^2 \ar[r]^{\b T_1} & T_2\Psi T_1 \ar[r]^{T_2\b} &
T_2^2\Psi }$$ are commutative.
\end{definition}
More formally, one should think of the isomorphism $\b$ as of a part of the data and define morphisms in the 2-category of categories with a comonad as pairs $(\Psi,\b)$. But we will not make this difference.

The following fact can be easily checked.
\begin{lemma}
\label{lemma_psipsi} Let $(\Psi_k)$ be a functor between cosimplicial categories with base change $\CC_{\bul}^{(1)}$ and 
$\CC_{\bul}^{(2)}$. Let $\TT_1$ and $\TT_2$ be the comonads on $\CC_0^{(1)}$ and $\CC_0^{(2)}$, defined in Definition~\ref{def_simpl>comonad}. Then the functor $\Psi_0\colon
\CC^{(1)}_0\ra\CC^{(2)}_0$ is compatible with comonads $\TT_1$ and
$\TT_2$.
\end{lemma}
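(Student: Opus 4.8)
The plan is to unwind the two definitions and show that the natural comparison isomorphism $\b_f$ coming with the functor $(\Psi_k)$ of cosimplicial categories gives, at the level of the associated comonads, the required isomorphism $\b\colon \Psi_0 T_1 \ra T_2 \Psi_0$ together with the two compatibility diagrams. Recall from Definition~\ref{def_simpl>comonad} that $T_i = P^{(i)*}P^{(i)}_*$, where $P^{(i)*}\colon \Kern(\CC^{(i)}_\bul)\ra \CC^{(i)}_0$ is the forgetful functor and $P^{(i)}_*$ its right adjoint, constructed in \cite[prop. 2.9]{El_C4}. By Proposition~\ref{prop_simpl>comonad} we may also think of $T_i$ concretely: for a cosimplicial category with base change, $P^{(i)}_*$ is given (up to the equivalence $\Kern \simeq \CC_{\bul\TT}$) by $F \mapsto (P^{(i)*}_{s}P^{(i)*}_{t}F, \ldots)$ for the two coface maps $s,t\colon [1]\to[1,2]$, so that $T_i F \cong P^{(i)*}_{s} P^{(i)}_{t*} F$, the "base change" expression for the comonad on $\CC^{(i)}_0$. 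This is the description I would actually compute with.

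First I would pin down the functor $P^{(i)}_*$ explicitly enough to see how $\Psi$ interacts with it. The key input is that a functor $(\Psi_k)$ between cosimplicial categories induces a functor $\~\Psi\colon \Kern(\CC^{(1)}_\bul)\ra\Kern(\CC^{(2)}_\bul)$ on descent categories: on an object $(F,\theta)$ it is $(\Psi_0 F, \b_{P_2}\circ \Psi_1(\theta)\circ \b_{P_1}^{-1})$, the cocycle condition being preserved precisely by the compatibility of the $\b_f$ with composition in $\Delta$ (the diagram in Definition~\ref{def_functorcosimpl}). Moreover the square
$$\xymatrix{\Kern(\CC^{(1)}_\bul) \ar[r]^{\~\Psi}\ar[d]_{P^{(1)*}} & \Kern(\CC^{(2)}_\bul)\ar[d]^{P^{(2)*}}\\ \CC^{(1)}_0 \ar[r]^{\Psi_0} & \CC^{(2)}_0}$$
commutes on the nose (both ways send $(F,\theta)$ to $\Psi_0 F$). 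Then I would pass to right adjoints: since $\Psi_0 P^{(1)*} = P^{(2)*}\~\Psi$, taking right adjoints of both sides and using that all four functors have right adjoints (the outer two by \cite[prop. 2.9]{El_C4}, $\~\Psi$ need not, but we only need the composite), we get a canonical natural transformation, in fact an isomorphism, $\Psi_0 P^{(1)}_* \ra P^{(2)}_* \Psi'$ for a suitable right adjoint $\Psi'$ of $\~\Psi$; more cleanly, one checks directly that $P^{(2)}_* \Psi_0$ and $\Psi_0 P^{(1)}_*$ agree after composing with $P^{(2)*}$, using the base-change form of $P_*$ and the isomorphisms $\b_f$ for the two coface maps. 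Concretely: $P^{(2)*} P^{(2)}_* \Psi_0 F = T_2\Psi_0 F \cong P^{(2)*}_s P^{(2)*}_t \Psi_0 F \cong P^{(2)*}_s \Psi_1 P^{(1)*}_t F$ (by $\b_t^{-1}$) $\cong \Psi_1 P^{(1)*}_s P^{(1)*}_t F$ (by $\b_s$) $\cong \Psi_1 T_1 F$... and here I would be careful: the target should be $\Psi_0 T_1 F$, so one uses instead that $T_i$ lands in $\CC_0$ via the appropriate identifications and that $\Psi_0$ on $\CC_0$ is what appears — i.e. I would track through the adjunction $(P^{(i)*}, P^{(i)}_*)$ rather than the base-change formula, to avoid confusing $\Psi_0$ with $\Psi_1$. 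Defining $\b$ as the mate of the identity $2$-cell $\Psi_0 P^{(1)*} = P^{(2)*}\~\Psi$ under the two adjunctions $P^{(i)*}\dashv P^{(i)}_*$ is the clean statement.

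Second, I would verify the two compatibility diagrams. The counit $\e_i\colon T_i\ra \Id$ is, by construction of the Eilenberg--Moore comonad, the counit of $P^{(i)*}\dashv P^{(i)}_*$; the comultiplication $\delta_i = P^{(i)*}\eta_i P^{(i)}_*$ where $\eta_i$ is the unit. Since $\b$ is defined as a mate of a (strictly commuting) square of functors, both required diagrams reduce to the standard naturality/compatibility of mates with units and counits — this is a formal $2$-categorical fact about adjunctions (sometimes packaged as: mates compose, and the mate construction is compatible with the triangle identities). So the verification is bookkeeping with units, counits, and the interchange law, exactly the kind of diagram chase the paper calls "easily checked".

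The main obstacle, and the only place demanding care, is the very first one: making precise the right adjoint $P^{(i)}_*$ of the forgetful functor from the descent category and checking that the square $\Psi_0 P^{(1)*} = P^{(2)*}\~\Psi$ really does commute strictly (not just up to the $\b_f$), so that "take mates" is legitimate. Once $\~\Psi$ is constructed from the $\b_f$ and one observes that it covers $\Psi_0$ over the forgetful functors, everything else is the formal calculus of mates. I do not expect to need any hypothesis beyond those already in the statement: base change on both cosimplicial categories guarantees the right adjoints $P^{(i)}_*$ exist (via \cite[prop. 2.9]{El_C4}), and the compatibility of the $\b_f$ with composition in $\Delta$ is exactly what makes $\~\Psi$ well-defined on cocycles.
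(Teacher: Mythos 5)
Your overall strategy\dash build the induced functor $\~\Psi$ on the descent categories from the data $\b_f$, observe that $\Psi_0 P^{(1)*}=P^{(2)*}\~\Psi$ strictly over the forgetful functors, take the mate under the adjunctions $P^{(i)*}\dashv P^{(i)}_*$, and verify the counit/comultiplication diagrams by the formal calculus of mates\dash is the natural one, and the last step (the two diagrams) is indeed routine once $\b$ exists. The paper offers no argument at all here ("easily checked"), so there is nothing to compare against. But your proof has a genuine gap at the one step you flag and then talk yourself out of: the mate of the identity $2$-cell is a priori only a natural transformation $\~\Psi P^{(1)}_*\ra P^{(2)}_*\Psi_0$, hence $\b\colon \Psi_0 T_1\ra T_2\Psi_0$ is a priori only a natural transformation, whereas the definition of compatibility demands an \emph{isomorphism}. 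Definition~\ref{def_functorcosimpl} supplies isomorphisms $\b_f$ only against the pullback functors $P_f^*$ and says nothing about the right adjoints $P_{f*}$. Writing $T_i\cong P^{(i)}_{s*}P_t^{(i)*}$ for the two cofaces $s,t\colon[1]\ra[1,2]$ (your formula $P_s^*P_{t*}$ has the stars transposed and does not typecheck), one sees that $\b$ factors as the given isomorphism $\b_t$ followed by the Beck--Chevalley map $\Psi_0P^{(1)}_{s*}\ra P^{(2)}_{s*}\Psi_1$; your displayed chain of isomorphisms silently replaces $P_{s*}$ by $P_s^*$ at exactly this point, which is why it appears to need only the given $\b$'s.

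That Beck--Chevalley map is not invertible under the stated hypotheses. Take $\CC^{(1)}_{\bul}$ to be the constant cosimplicial category attached to $\mathrm{id}\colon\Spec\k\ra\Spec\k$ and $\CC^{(2)}_{\bul}$ the one attached to $q\colon Y\ra\Spec\k$ for a connected projective $Y$ with $H^{>0}(Y,\O_Y)\ne0$, with $\Psi_k$ the pullbacks along $Y^{k+1}\ra\Spec\k$; both cosimplicial categories have base change and $(\Psi_k)$ is a functor between them, yet $\Psi_0T_1V=V\otimes\O_Y$ while $T_2\Psi_0V=V\otimes R\Gamma(Y,\O_Y)\otimes\O_Y$, which are not isomorphic. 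So the invertibility of $\b$ is an additional condition on $(\Psi_k)$\dash compatibility with the pushforwards $P_{f*}$, i.e.\ invertibility of the relevant mates\dash and cannot be deduced formally, as your write-up asserts ("in fact an isomorphism"). In every application in the paper (pullback along an equivariant morphism, tensoring with an equivariant object, kernel functors) the relevant squares of schemes are Cartesian with flat legs, so flat base change and the projection formula supply exactly these isomorphisms; a correct proof must either add this as a hypothesis or verify it case by case, rather than derive it from Definition~\ref{def_functorcosimpl} and Definition~\ref{def_simpl>comonad} alone.
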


The following lemma will be used later for description of components of semiorthogonal decompositions.
\begin{lemma}
\label{lemma_fffunctor}
Let $\TT_1$ and $\TT_2$ be comonads on categories $\CC_1$ and $\CC_2$ respectively, let $\CC'_1\subset\CC_1$ and $\CC'_2\subset \CC_2$ be subcategories.
Suppose that the functor $\Psi\colon \CC_1\ra \CC_2$ sends $\CC_1'$ to $\CC_2'$.
If $\Psi$ is compatible with the comonads  $\TT_1$ and $\TT_2$, then $\Psi$ induces a functor $\Psi_{\TT}\colon\CC'_{1\TT_1}\ra\CC'_{2\TT_2}$. Moreover, if $\Psi$ 
is fully faithful and is an equivalence between $\CC_1'$ and some full subcategory $\AA'$ in $\CC_2'$, then $\Psi_{\TT}$ is an equivalence 
$\CC_{1\TT_1}'\ra\AA_{\TT_2}'\subset \CC_{2\TT_2}'$.
\end{lemma}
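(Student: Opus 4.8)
The plan is to unwind the definitions of compatibility and of the categories $\CC'_{i\TT_i}$, construct $\Psi_\TT$ on objects and morphisms using the natural isomorphism $\b\colon \Psi T_1 \xra{\sim} T_2\Psi$, and then verify the comodule axioms by diagram chasing. First I would define $\Psi_\TT$ on an object $(F,h)$ of $\CC'_{1\TT_1}$, i.e. $F\in\CC_1'$ and $h\colon F\ra T_1F$ a $\TT_1$-comodule structure. Since $\Psi$ sends $\CC_1'$ into $\CC_2'$, we have $\Psi F\in\CC_2'$; the candidate comodule structure is the composite $\Psi F \xra{\Psi h} \Psi T_1 F \xra{\b F} T_2\Psi F$. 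On a morphism $f\colon (F_1,h_1)\ra(F_2,h_2)$ one takes $\Psi_\TT(f)=\Psi(f)$, and compatibility of $f$ with the comodule structures ($h_2\circ f = T_1f\circ h_1$) together with naturality of $\b$ gives that $\Psi(f)$ is a morphism of $\TT_2$-comodules. I would then check the two comodule axioms for $(\Psi F,\b F\circ\Psi h)$: the counit axiom $\e_2\Psi F\circ\b F\circ\Psi h = \Id$ follows from the first compatibility triangle (which says $\e_2\Psi\circ\b=\Psi\e_1$) applied at $F$, combined with $\Psi$ of the counit axiom for $(F,h)$; the coassociativity axiom follows from the second compatibility diagram for $(\Psi,\b)$ together with $\Psi$ applied to the coassociativity square for $(F,h)$ and naturality of $\b$. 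This gives a well-defined functor $\Psi_\TT\colon \CC'_{1\TT_1}\ra\CC'_{2\TT_2}$, and functoriality (respect for composition and identities) is immediate since on morphisms it is just $\Psi$.

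For the second assertion, suppose $\Psi$ is fully faithful and restricts to an equivalence of $\CC_1'$ with a full subcategory $\AA'\subset\CC_2'$. The target of $\Psi_\TT$ visibly lands in $\AA'_{\TT_2}$ (a comodule $(\Psi F,\b F\circ\Psi h)$ has underlying object in $\AA'$), so $\Psi_\TT$ may be regarded as a functor $\CC'_{1\TT_1}\ra\AA'_{\TT_2}$. Full faithfulness of $\Psi_\TT$: given objects $(F_1,h_1),(F_2,h_2)$, the map $\Hom((F_1,h_1),(F_2,h_2))\ra\Hom(\Psi_\TT(F_1,h_1),\Psi_\TT(F_2,h_2))$ is the restriction of the bijection $\Hom_{\CC_1}(F_1,F_2)\xra{\sim}\Hom_{\CC_2}(\Psi F_1,\Psi F_2)$ to the subsets cut out by the comodule-morphism condition; since $\b$ is an isomorphism and is natural, $\Psi f$ is compatible with the induced comodule structures if and only if $f$ is compatible with $h_1,h_2$, so the restriction remains a bijection. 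For essential surjectivity onto $\AA'_{\TT_2}$: take $(G,k)$ with $G\in\AA'$ and $k\colon G\ra T_2G$ a $\TT_2$-comodule structure. Choose $F\in\CC_1'$ and an isomorphism $\phi\colon\Psi F\xra{\sim}G$. Transport $k$ along $\phi$ to a comodule structure $k'$ on $\Psi F$, then set $h=(\b F)^{-1}\circ k'\colon \Psi F\ra \Psi T_1 F$ composed with the inverse of the faithful bijection $\Hom_{\CC_1}(F,T_1F)\xra{\sim}\Hom_{\CC_2}(\Psi F,\Psi T_1F)$ to obtain $h\colon F\ra T_1F$; one checks $h$ satisfies the $\TT_1$-comodule axioms precisely because $k'$ satisfies the $\TT_2$-axioms, again using the two $(\Psi,\b)$-compatibility diagrams and that $\Psi$ reflects identities and commutativity of diagrams built from $\Hom$-sets via its full faithfulness. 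Then $\Psi_\TT(F,h)\cong(G,k)$ via $\phi$.

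The routine bookkeeping — pasting the compatibility diagrams with the comodule axioms — is straightforward but slightly tedious; the one point deserving care is that reflecting the $\TT_1$-comodule axioms from the $\TT_2$-axioms in the essential-surjectivity step uses that $\Psi$ is \emph{faithful} on the relevant $\Hom$-sets (so that an equality of morphisms in $\CC_2$ of the form $\Psi(\alpha)=\Psi(\beta)$ forces $\alpha=\beta$), and that all the morphisms appearing in those axioms (e.g. $T_1h$, $\delta_1 F$) are matched under $\b$ with the corresponding morphisms for $\Psi F$ — this is exactly the content of the second compatibility diagram together with naturality of $\b$ applied to $h$. I expect this reflection step to be the main (mild) obstacle: one must be careful that $\b$ is natural in its argument so that $\b(T_1F)\circ\Psi(T_1h)=T_2(\b F)\circ\b(F')\circ\ldots$ type identities hold, but since $\b$ is given as a natural isomorphism this goes through. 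No hypotheses beyond those stated are needed, and in particular no triangulated or abelian structure is used — this is a purely formal statement about comonads and comodules.
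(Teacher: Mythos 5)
Your proof is correct; the paper itself dismisses this lemma with the single word ``Evident,'' and your argument is precisely the routine verification that is being left to the reader: transport the comodule structure via $\b F\circ\Psi h$, check the two axioms using the two compatibility diagrams plus naturality of $\b$, and use full faithfulness of $\Psi$ (plus invertibility of $\b$) to reflect the comodule axioms and morphism conditions back for full faithfulness and essential surjectivity onto $\AA'_{\TT_2}$. Nothing is missing.
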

\begin{proof}
Evident.
\end{proof}

\section{Semiorthogonal decompositions for categories of comodules}
\label{section_sodt}

Let $\TT$ be a comonad on a triangulated category $\CC$.
In this section we show that a semiorthogonal decomposition of the category $\CC$ induces a semiorthogonal decomposition of the category of comodules $\CC_{\TT}$ under the following assumptions: the category $\CC_{\TT}$ is triangulated and the initial semiorthogonal decomposition is compatible with~$\TT$.

\begin{definition}
\label{def_compatible}
We say that a functor $T\colon \CC\ra\CC$ is
\emph{upper triangular} with respect to a semiorthogonal decomposition $\CC=
\langle\AA_1,\ldots,\AA_n\rangle$ if
\begin{equation}
T\AA_k\subset
\langle\AA_1,\ldots,\AA_k\rangle\quad\text{for all} \quad k, 1\le
k\le n.
\end{equation}
\end{definition}

\begin{theorem}
\label{th_sodt}Let $\TT=(T,\e,\delta)$ be a comonad on a triangulated category $\CC$, let $\CC'\subset\CC$ be a triangulated subcategory.
Suppose that the functor $T$ is exact and that the category $\CC_{\TT}$ is triangulated in the sense of Definition~\ref{def_quasitriang}.
Suppose that the functor $T$ is upper triangular with respect to a semiorthogonal decomposition
$\CC=\langle\AA_1,\ldots,\AA_n\rangle$. Suppose also that this decomposition induces a semiorthogonal decomposition $\CC'=\langle\AA'_1,\ldots,\AA'_n\rangle$,
where $\AA'_k=\AA_k\cap\CC'$. Then the category $\CC'_{\TT}$ is triangulated  and admits a semiorthogonal decomposition $\CC'_{\TT}=\langle\AA'_{1\TT},\ldots,\AA'_{n\TT}\rangle$.
\end{theorem}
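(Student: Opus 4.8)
The plan is to define the candidate subcategories $\AA'_{k\TT}\subset\CC'_\TT$ and check the two defining properties of a semiorthogonal decomposition: semiorthogonality of the sequence, and generation. First I would set $\AA'_{k\TT}=\CC'_{k\TT}$, where $\CC'_k$ is the full triangulated subcategory of $\CC'$ generated by $\AA'_k$ together with the $T$-coaction restricted appropriately; more precisely, I would let $\AA'_{k\TT}$ be the full subcategory of $\CC'_\TT$ consisting of comodules $(F,h)$ with $F\in\AA'_k$, i.e. $\AA'_{k\TT}=(\AA'_k)_{\TT}$ in the notation of Definition~\ref{def_dxt'}. For this to make sense one first has to observe that the upper-triangularity hypothesis $T\AA_k\subset\langle\AA_1,\ldots,\AA_k\rangle$, combined with $\AA'_k=\AA_k\cap\CC'$ and the fact that $T$ is exact and preserves $\CC'$ (which follows since $\CC'_\TT$ is being asked to be triangulated, but more safely one should assume $T(\CC')\subset\CC'$ — this is implicit in the setup where $\CC'_\TT$ inherits a triangulated structure), does \emph{not} immediately give $T\AA'_k\subset\AA'_k$; that is the whole point of the ``upper triangular'' rather than ``block diagonal'' hypothesis.

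The key step is therefore the semiorthogonality. I would argue as follows: given $(F,h)\in\AA'_{i\TT}$ and $(G,g)\in\AA'_{j\TT}$ with $i>j$, a morphism $(F,h)\to(G,g)$ in $\CC'_\TT$ is in particular a morphism $F\to G$ in $\CC'$; since $F\in\AA'_i\subset\AA_i$ and $G\in\AA'_j\subset\AA_j$ with $i>j$, semiorthogonality of $\langle\AA_1,\ldots,\AA_n\rangle$ forces $\Hom_{\CC}(F,G)=0$, hence $\Hom_{\CC'_\TT}((F,h),(G,g))=0$. So $\langle\AA'_{1\TT},\ldots,\AA'_{n\TT}\rangle$ is a semiorthogonal sequence, and this part is essentially free. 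The content is in generation: I must show every $(F,h)\in\CC'_\TT$ admits a filtration by distinguished triangles with quotients in the $\AA'_{k\TT}$. Here I would take the semiorthogonal filtration of the underlying object $F\in\CC'=\langle\AA'_1,\ldots,\AA'_n\rangle$, giving a tower $0=F_0\to F_1\to\cdots\to F_n=F$ with $\mathrm{cone}(F_{k-1}\to F_k)\in\AA'_k$, and then try to lift each $F_k$ to a comodule compatibly with $h$. The lifting of the \emph{coaction} is where upper-triangularity is used: one needs the truncation maps $F\to F_k$ (projection onto $\langle\AA'_1,\ldots,\AA'_k\rangle$) to be compatible with $T$, i.e. that $T$ respects the filtration, which is exactly $T\langle\AA_1,\ldots,\AA_k\rangle\subset\langle\AA_1,\ldots,\AA_k\rangle$ — equivalent to upper-triangularity. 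Then the composite $F_k\to F\xrightarrow{h}TF\to TF_k$ should define a coaction $h_k$ on $F_k$, the comodule axioms for $(F_k,h_k)$ following from those for $(F,h)$ by naturality of the projection functors $\CC\to\langle\AA_1,\ldots,\AA_k\rangle$ and their interaction with $\e$ and $\delta$.

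The main obstacle I expect is precisely this last point: showing that the projection functor onto $\langle\AA_1,\ldots,\AA_k\rangle$ intertwines the comonad structure in the strong sense needed to produce honest $\TT$-comodules $(F_k,h_k)$, and that the resulting triangles $(F_{k-1},h_{k-1})\to(F_k,h_k)\to(C_k,\gamma_k)\to$ are distinguished in $\CC'_\TT$ in the sense of Definition~\ref{def_quasitriang} with $(C_k,\gamma_k)\in\AA'_{k\TT}$. Concretely, one must check that the coaction $\gamma_k$ induced on the cone $C_k$ of $F_{k-1}\to F_k$ is well-defined despite cones not being functorial; the rescue is that $C_k\in\AA_k$ while $T F_{k-1}\in\langle\AA_1,\ldots,\AA_{k-1}\rangle$, so $\Hom(C_k,TF_{k-1}[j])=0$ for the relevant $j$, which rigidifies the choice of $\gamma_k$ and forces the comodule axioms and the octahedral compatibilities. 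Once generation is established, the conclusion $\CC'_\TT=\langle\AA'_{1\TT},\ldots,\AA'_{n\TT}\rangle$ follows, and triangulatedness of $\CC'_\TT$ is part of the hypothesis (or follows from the hypothesis on $\CC_\TT$ by passing to the full subcategory of comodules supported on $\CC'$, using that $\CC'$ is triangulated and $T$-stable). I would close by remarking that the argument shows the projection functors $\CC'_\TT\to\AA'_{k\TT}$ are induced from those on $\CC'$, so the decomposition is compatible with the forgetful functor $\CC'_\TT\to\CC'$.
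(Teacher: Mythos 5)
Your proposal follows essentially the same route as the paper: the paper proves the $n=2$ case by lifting the semiorthogonal triangle $F_2\to F\to F_1$ of the underlying object to comodules\dash using $\Hom(F_2,TF_1)=0$ (from upper-triangularity) to extend the coaction $h$ to the outer terms and $\Hom(F_2,T^2F_1)=0$ to make that extension unique and hence force the comodule axioms\dash and then inducts on $n$, which is exactly your filtration argument unrolled two steps at a time. The only cosmetic issues are the direction of your tower (with the paper's convention $\Hom(\AA_j,\AA_i)=0$ for $j>i$, the piece with larger index is the ``sub'' and the projection onto $\langle\AA_1,\ldots,\AA_k\rangle$ is a quotient, so your arrows should run toward $0$ rather than toward $F$) and the hedge about $T(\CC')\subset\CC'$, which is not needed since the coaction $h\colon F\to TF$ is merely a morphism in $\CC$.
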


\begin{proof}
It follows directly from the definitions that the categories $\CC'_{\TT}$ and $\AA'_{i\TT}$ are triangulated.

The proof of the theorem is by induction in $n$, first we treat the case $n=2$.

It is evident that the subcategories $\AA'_{1\TT}$ and $\AA'_{2\TT}$ are semiorthogonal.
We need to check that any object $(F,h)$ in $\CC'_{\TT}$ fits into a distinguished triangle
\begin{equation}
\label{FFFh}
(F_2,h_2)\ra (F,h)\ra (F_1,h_1)\ra (F_2,h_2)[1],
\end{equation}
where $F_i\in \AA'_i$ and the morphisms lie in $\CC'_{\TT}$. Since
$\CC'=\langle\AA'_1,\AA'_2\rangle$, there exists a distinguished triangle in the category $\CC'$
\begin{equation}
\label{FFF}
F_2\ra F\ra F_1\ra F_2[1]
\end{equation}
with $F_i\in\AA'_i$. Applying the exact functor $T$  to it, we get a distinguished triangle
$$TF_2\ra TF\ra TF_1\ra TF_2[1].$$
Since $T$ is upper triangular, one has $\Hom(F_2,TF_1)=0$, therefore the morphism $h\colon F\ra TF$ extends to a morphism of triangles
\begin{equation}
\label{FFFFFF}
\xymatrix{
F_2\ar[r] \ar[d]^{h_2} & F\ar[r] \ar[d]^{h} & F_1\ar[r] \ar[d]^{h_1} &
F_2[1] \ar[d]^{h_2[1]} \\
TF_2\ar[r]  & TF\ar[r]  & TF_1\ar[r]  & TF_2[1].
}
\end{equation}
Let us show that the pairs $(F_i,h_i)$ are comodules over $\TT$.
Indeed, consider the compositions of~(\ref{FFFFFF}) with
$$
\xymatrix{
TF_2\ar[r] \ar[d]^{Th_2} & TF\ar[r] \ar[d]^{Th} & TF_1\ar[r] \ar[d]^{Th_1} &
TF_2[1] \ar[d]^{Th_2[1]} \\
TTF_2\ar[r]  & TTF\ar[r]  & TTF_1\ar[r]  & TTF_2[1]
}
$$ and
$$
\xymatrix{
TF_2\ar[r] \ar[d]^{\delta F_2} & TF\ar[r] \ar[d]^{\delta F} & TF_1\ar[r] \ar[d]^{\delta F_1} &
TF_2[1] \ar[d]^{\delta F_2[1]} \\
TTF_2\ar[r]  & TTF\ar[r]  & TTF_1\ar[r]  & TTF_2[1].
}
$$
We obtain the two morphisms of triangles
$$\xymatrix{
F_2\ar[rr] \ar@<-1mm>[d]_{Th_2\circ h_2}\ar@<1mm>[d]^{\delta F_2\circ h_2} && F\ar[rr] \ar[d]_{Th\circ h}^{\delta F\circ h} &&
F_1\ar[rr] \ar@<-1mm>[d]_{Th_1\circ h_1}\ar@<1mm>[d]^{\delta F_1\circ h_1} && F_2[1] \ \ar@<-1mm>[d]_{Th_2\circ h_2[1]}\ar@<1mm>[d]^{\delta F_2\circ h_2[1]}\\
TTF_2 \ar[rr]  && TTF\ar[rr]  && TTF_1\ar[rr]  && TTF_2[1],}$$
they coincide in the middle term.
Since $\Hom(F_2,TTF_1)=0$, a morphism  of triangles, extending a given morphism of middle terms, is unique. Hence, $\delta F_i\circ h_i=Th_i\circ h_i$.
It is checked similarly that $\e F_i\circ h_i=\Id_{F_i}$. The diagram~(\ref{FFFFFF}) shows that the demanded triangle~(\ref{FFFh}) is constructed.

Now suppose that the theorem is proved for $n\ge 2$, consider the case $n+1$. The semiorthogonal decompositions $\AA_0=\langle\AA_1,\ldots,\AA_{n}\rangle$ and $\CC=\langle\AA_0,\AA_{n+1}\rangle$
satisfy assumptions of the theorem, and we obtain
$$\CC'_{\TT}=\langle\AA'_{0\TT},\AA'_{n+1\TT}\rangle=
\langle\langle\AA'_{1\TT},\ldots,\AA'_{n\TT}\rangle,\AA'_{n+1\TT}\rangle=
\langle\AA'_{1\TT},\ldots,\AA'_{n\TT},\AA'_{n+1\TT}\rangle.$$
\end{proof}

\section{Descent for semiorthogonal decompositions: morphism of schemes}
\label{section_descentforschemes}

Let $p\colon X\ra S$ be a flat morphism of quasi-projective schemes such that $\O_S$ is a direct summand in $Rp_*\O_X$. Then (see Theorem~\ref{th_descentfor}) the derived category of sheaves on $S$ is equivalent to a descent category associated with the derived category of sheaves on $X$. This allows to use Theorem~\ref{th_sodt} and thus to construct semiorthogonal decompositions of the derived category of sheaves on $S$.
Here the role of the category $\CC$ from Theorem~\ref{th_sodt} is played by the unbounded derived category $\D(X)$. Unfortunately, the functor
$T=p^*p_*$ on $\D(X)$ usually does not preserve ``small'' subcategories like $\D^{\perf}(X)$ and $\D^b(X)$.
Hence, to use the results of the previous section we need to construct semiorthogonal decompositions of the ``big'' category $\D(X)$.

Denote by $\TT_p=(T_p,\e,\delta)$ the comonad on $\D(X)$ associated with the morphism~$p$.

\begin{theorem}
\label{th_sodschemesbig}
Let $X$ and $S$ be quasi-projective schemes and $p\colon X\ra S$ be a flat morphism such that $\O_S$ is a direct summand in $Rp_*\O_X$.
Suppose a semiorthogonal decomposition $\D(X)=\langle\AA_1,\ldots,\AA_n\rangle$ is given, and the functor $T_p=p^*p_*$ is upper triangular with respect to it. Then the category
$\D(S)$ admits a semiorthogonal decomposition
$\langle\BB_1,\dots,\BB_n\rangle$. Here $\BB_k\subset\D(S)$ denotes a full subcategory that consists of objects~$H$ such that $p^*H\in\AA_k$.
\end{theorem}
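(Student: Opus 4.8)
The plan is to deduce this theorem directly from Theorem~\ref{th_sodt} combined with the equivalence of Theorem~\ref{th_descentfor}. First I would invoke Theorem~\ref{th_descentfor}: since $p$ is flat and $\O_S \to Rp_*\O_X$ is a split embedding, the comparison functor $\Phi\colon \D(S) \to \D(X)_{\TT_p}$ is an equivalence of triangulated categories. In particular, $\D(X)_{\TT_p}$ is triangulated in the sense of Definition~\ref{def_quasitriang} (the induced triangulated structure agrees with the one of that definition by the cited \cite[prop. 3.13]{El_C4}). So it suffices to produce a semiorthogonal decomposition of $\D(X)_{\TT_p}$ and then transport it along $\Phi$.

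Next I would apply Theorem~\ref{th_sodt} with $\CC = \CC' = \D(X)$ (so $\CC' \subset \CC$ is the whole category, $\AA'_k = \AA_k$), $\TT = \TT_p$, and $T = T_p = p^*p_*$. The hypotheses to check are: $T_p$ is exact (true, as it is a composition of derived pullback and pushforward), $\D(X)_{\TT_p}$ is triangulated (just established), and $T_p$ is upper triangular with respect to $\D(X) = \langle \AA_1,\ldots,\AA_n\rangle$ (this is exactly the assumption of the present theorem). Theorem~\ref{th_sodt} then yields a semiorthogonal decomposition $\D(X)_{\TT_p} = \langle \AA_{1\TT},\ldots,\AA_{n\TT}\rangle$, where $\AA_{k\TT}$ is the full subcategory of comodules $(F,h)$ with $F \in \AA_k$.

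It remains to identify $\Phi^{-1}(\AA_{k\TT}) \subset \D(S)$ with the subcategory $\BB_k$ described in the statement. By construction $\Phi$ sends $H$ to the pair $(p^*H, h)$ with $h$ the canonical adjunction morphism, so $\Phi(H) \in \AA_{k\TT}$ if and only if $p^*H \in \AA_k$; since $\Phi$ is an equivalence, its essential image restricted to this condition is precisely $\{H : p^*H \in \AA_k\} = \BB_k$. Transporting the decomposition of $\D(X)_{\TT_p}$ through the equivalence $\Phi$ gives $\D(S) = \langle \BB_1,\ldots,\BB_n\rangle$, as semiorthogonality and the existence of the filtration triangles are preserved by any equivalence of triangulated categories.

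I do not expect a serious obstacle here: the theorem is essentially a formal consequence of the two black-box results quoted in the Preliminaries plus the already-proved Theorem~\ref{th_sodt}. The only mildly delicate point is the bookkeeping that the triangulated structure on $\D(X)_{\TT_p}$ transported from $\D(S)$ coincides with the structure of Definition~\ref{def_quasitriang} — needed so that Theorem~\ref{th_sodt} applies verbatim — but this is exactly \cite[prop. 3.13]{El_C4}, so one can cite it and move on. A secondary cosmetic point is making sure $\BB_k$ is stated as a genuine full (strict) subcategory rather than the essential image, which is handled by the final sentence about $p^*H \in \AA_k$.
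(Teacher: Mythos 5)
Your proposal is correct and follows exactly the paper's own argument: apply Theorem~\ref{th_descentfor} to identify $\D(S)$ with $\D(X)_{\TT_p}$, then apply Theorem~\ref{th_sodt} with $\CC=\CC'=\D(X)$, and identify the components via the comparison functor. The extra care you take about the triangulated structure on $\D(X)_{\TT_p}$ agreeing with Definition~\ref{def_quasitriang} is a correct and welcome clarification of a point the paper leaves implicit.
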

\begin{proof}
By Theorem~\ref{th_descentfor}, the category $\D(S)$ is equivalent to
$\D(X)_{\TT_p}$. Now the statement follows from Theorem~\ref{th_sodt} applied to $\CC=\CC'=\D(X)$, from the construction of the equivalence $\D(S)\cong \D(X)_{\TT_p}$, and from Definition~\ref{def_dxt'} of categories $\AA_{i\TT_p}$.
\end{proof}

To prove similar statements for the category of perfect complexes or for the bounded derived category, we need to extend a semiorthogonal decomposition of these categories to a semiorthogonal decomposition of the unbounded derived category. Such extension was done by A.\,Kuznetsov in~\cite{Ku}, we recall the construction.

Let $\AA\subset \D(X)$ be a subcategory.  Define $\AA^{\oplus\infty}\subset \D(X)$
to be the minimal triangulated subcategory in $\D(X)$ containing $\AA$
and closed under arbitrary direct sums.

\begin{lemma}
\label{lemma_extension}
1. Suppose the category $\D^{\perf}(X)$ has a semiorthogonal decomposition
$\langle\AA^{\perf}_1,\ldots,\AA^{\perf}_n\rangle$. Let
 $\AA_i=(\AA^{\perf}_i)^{\oplus\infty}$. Then the categories $\AA_i$ form a semiorthogonal decomposition $\D(X)=\langle \AA_1,\ldots,\AA_n\rangle$,
and $\AA_i^{\perf}=\AA_i\cap \D^{\perf}(X)$.

2. Suppose $\AA^p\subset \D^{\perf}(X)$ is a left admissible triangulated subcategory, and $\D^{\perf}(X)=\langle\AA^{p},\ZZ^{p}\rangle$ is a corresponding semiorthogonal decomposition. Then $\AA=(\AA^p)^{\oplus\infty}$ is a right orthogonal to
$\ZZ^{p}$ in $\D(X)$.

3.  Suppose $\D^b(X)=\langle\AA'_1,\ldots,\AA'_n\rangle$ is a semiorthogonal decomposition into admissible subcategories.  Then the categories $\AA_i=(\AA'_i)^{\oplus\infty}$ form a semiorthogonal decomposition $\D(X)=\langle \AA_1,\ldots,\AA_n\rangle$, and $\AA'_i=\AA_i\cap \D^b(X)$.
\end{lemma}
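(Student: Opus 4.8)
The plan is to reduce everything to a few standard facts about the unbounded derived category $\D(X)$ of a quasi-projective scheme together with Neeman's localization theory. First, $\D(X)$ is compactly generated and its subcategory of compact objects --- those $E$ for which $\Hom(E,-)$ commutes with arbitrary direct sums --- is $\D^{\perf}(X)$, see \cite{BvdB}. Second, for a \emph{set} $\SS$ of objects write $\SS^{\oplus\infty}$ for the smallest triangulated subcategory containing $\SS$ and closed under arbitrary direct sums (a \emph{localizing} subcategory; recall that such subcategories are automatically thick, i.e.\ closed under direct summands): then the inclusion $\SS^{\oplus\infty}\hookrightarrow\D(X)$ has a right adjoint by Brown representability, and if moreover $\SS$ consists of compact objects then $\SS^{\oplus\infty}$ is itself compactly generated, its compact objects form the idempotent completion of the thick subcategory generated by $\SS$, the right adjoint above preserves coproducts, and the quotient functor restricts to an equivalence of the right orthogonal $(\SS^{\oplus\infty})^{\perp}$ with the Verdier quotient $\D(X)/\SS^{\oplus\infty}$, so that $(\SS^{\oplus\infty})^{\perp}$ is compactly generated with $\big((\SS^{\oplus\infty})^{\perp}\big)^{c}$ equal to the idempotent completion of $\D^{\perf}(X)/(\SS^{\oplus\infty}\cap\D^{\perf}(X))$. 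I will also use two trivial remarks: for a fixed $F$ the class $\{E:\Hom(E,F[k])=0\text{ for all }k\}$ is localizing, whereas for a \emph{compact} $E$ the class $\{F:\Hom(E,F[k])=0\text{ for all }k\}$ is localizing.

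For part~2, set $\ZZ=(\ZZ^{p})^{\oplus\infty}$. As a component of a semiorthogonal decomposition $\ZZ^{p}$ is thick, and lying in the idempotent complete category $\D^{\perf}(X)$ it is idempotent complete; so by the facts above $\ZZ$ is compactly generated, $\ZZ\cap\D^{\perf}(X)=\ZZ^{p}$, and the quotient functor identifies $\ZZ^{\perp}$ with $\D(X)/\ZZ$, whence $(\ZZ^{\perp})^{c}$ is the idempotent completion of $\D^{\perf}(X)/\ZZ^{p}$. The decomposition $\D^{\perf}(X)=\langle\AA^{p},\ZZ^{p}\rangle$ gives an equivalence $\D^{\perf}(X)/\ZZ^{p}\xrightarrow{\sim}\AA^{p}$, and $\AA^{p}$ is already idempotent complete; concretely the projection sends a perfect complex $E$, sitting in a triangle $z\to E\to a\to z[1]$ with $z\in\ZZ^{p}$ and $a\in\AA^{p}$, to $a$, and one checks that $a\in\ZZ^{\perp}$ (first trivial remark) and that $E$ and $a$ have the same image in $\D(X)/\ZZ$. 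Therefore $(\ZZ^{\perp})^{c}=\AA^{p}$ as a subcategory of $\D(X)$, and $\ZZ^{\perp}$, being compactly generated, equals $(\AA^{p})^{\oplus\infty}$. Finally the right orthogonal of $\ZZ^{p}$ in $\D(X)$ coincides with $\ZZ^{\perp}$ by the first trivial remark, and part~2 follows.

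For part~1, first note semiorthogonality: from $\Hom(\AA^{\perf}_{i},\AA^{\perf}_{j})=0$ for $j<i$, compactness of the objects of $\AA^{\perf}_{i}$ together with the second trivial remark give $\Hom(\AA^{\perf}_{i},\AA_{j})=0$, and then the first trivial remark gives $\Hom(\AA_{i},\AA_{j})=0$, where $\AA_{k}=(\AA^{\perf}_{k})^{\oplus\infty}$. The existence of the decomposition is proved by induction on $n$, in the form: if $\D$ is compactly generated with $\D^{c}$ idempotent complete and $\D^{c}=\langle\AA^{c}_{1},\ldots,\AA^{c}_{n}\rangle$, then $\D=\langle\AA_{1},\ldots,\AA_{n}\rangle$ with $\AA_{i}=(\AA^{c}_{i})^{\oplus\infty}$ and $\AA_{i}\cap\D^{c}=\AA^{c}_{i}$. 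The case $n=1$ is clear since $\D^{c}$ generates $\D$. For the step, write $\D^{c}=\langle\AA^{c}_{1},\ZZ^{c}\rangle$ with $\ZZ^{c}=\langle\AA^{c}_{2},\ldots,\AA^{c}_{n}\rangle$; here $\AA^{c}_{1}$ is left admissible, so by (the proof of) part~2 one has $(\AA^{c}_{1})^{\oplus\infty}=\big((\ZZ^{c})^{\oplus\infty}\big)^{\perp}$, while the right adjoint to $(\ZZ^{c})^{\oplus\infty}\hookrightarrow\D$ produces for every $F$ a distinguished triangle with left term in $(\ZZ^{c})^{\oplus\infty}$ and right term in its right orthogonal $(\AA^{c}_{1})^{\oplus\infty}$; together with semiorthogonality this is the decomposition $\D=\langle(\AA^{c}_{1})^{\oplus\infty},(\ZZ^{c})^{\oplus\infty}\rangle$. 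Since $\ZZ^{c}$ is thick and idempotent complete, $(\ZZ^{c})^{\oplus\infty}$ is compactly generated with compact objects $\ZZ^{c}$, so the inductive hypothesis applies to it with its length-$(n-1)$ decomposition and yields $(\ZZ^{c})^{\oplus\infty}=\langle\AA_{2},\ldots,\AA_{n}\rangle$; combining, $\D=\langle\AA_{1},\ldots,\AA_{n}\rangle$. The equality $\AA_{i}\cap\D^{\perf}(X)=\AA^{\perf}_{i}$ holds because $\AA_{i}^{c}=\AA^{\perf}_{i}$ and an object that is compact in $\D(X)$ and lies in the localizing subcategory $\AA_{i}$ is compact in $\AA_{i}$.

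Part~3 is the delicate point, and I expect it to be the main obstacle. When $X$ is regular, $\D^{b}(X)=\D^{\perf}(X)$ and part~3 is literally part~1, so only the singular case is at stake. There the objects of $\AA'_{i}\subset\D^{b}(X)$ are in general \emph{not} compact in $\D(X)$ --- already on a non-regular affine scheme the residue field of a singular point is not compact, because mapping out of a non-perfect complex does not commute with direct sums whose terms lie in unboundedly negative cohomological degrees --- so the arguments above do not apply verbatim: in particular $\{F:\Hom(E,F[k])=0\}$ need no longer be closed under direct sums when $E\in\AA'_{i}$. One still obtains a right adjoint to each $(\AA'_{i})^{\oplus\infty}\hookrightarrow\D(X)$ from Brown representability, and $\D^{b}(X)$, containing $\D^{\perf}(X)$, still generates $\D(X)$ as a localizing subcategory, so the $\AA_{i}=(\AA'_{i})^{\oplus\infty}$ jointly generate $\D(X)$. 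The remaining task, which I would carry out following \cite{Ku}, is: reduce to $n=2$, and, using that the decomposition of $\D^{b}(X)$ into admissible subcategories provides genuine projection functors, prove that the right orthogonal of $(\AA'_{2})^{\oplus\infty}$ in $\D(X)$ is closed under direct sums and equals $(\AA'_{1})^{\oplus\infty}$; granting this, the adjoint-triangle argument of part~1 gives $\D(X)=\langle(\AA'_{1})^{\oplus\infty},(\AA'_{2})^{\oplus\infty}\rangle$, and intersecting with $\D^{b}(X)$ recovers $\AA'_{i}$. Establishing this orthogonality and coproduct-closedness without compactness at one's disposal is exactly where the real work of the lemma lies.
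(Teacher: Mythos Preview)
Your arguments for parts~1 and~2 are correct and essentially equivalent to the paper's, though you develop the Neeman--Brown-representability machinery in more detail whereas the paper simply cites \cite[prop.~4.2]{Ku} for part~1 and then deduces part~2 from part~1 in two lines: once $\D(X)=\langle(\AA^p)^{\oplus\infty},(\ZZ^p)^{\oplus\infty}\rangle$, one has $(\AA^p)^{\oplus\infty}=((\ZZ^p)^{\oplus\infty})^{\perp}=(\ZZ^p)^{\perp}$.

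For part~3 there is a genuine gap: you correctly diagnose the obstacle (objects of $\AA'_i$ need not be compact in $\D(X)$), but your proposed cure --- working directly with the projection functors of $\D^b(X)$ to control $(\AA'_2)^{\oplus\infty}$ and its orthogonal --- is left unfinished, and is not the approach the paper takes. The paper's idea is to \emph{reduce to the perfect case} rather than fight the lack of compactness. The key input you are missing is \cite[prop.~4.1]{Ku}: an \emph{admissible} semiorthogonal decomposition $\D^b(X)=\langle\AA'_1,\ldots,\AA'_n\rangle$ restricts to a semiorthogonal decomposition $\D^{\perf}(X)=\langle\AA_1^p,\ldots,\AA_n^p\rangle$ with $\AA_i^p=\AA'_i\cap\D^{\perf}(X)$. (Admissibility is used here: the two-sided adjoints to the inclusions preserve perfectness.) Once this is in hand, part~1 applied to $\langle\AA_1^p,\ldots,\AA_n^p\rangle$ gives the decomposition of $\D(X)$ into the $\AA_i=(\AA_i^p)^{\oplus\infty}$; part~2 (applied to $\D^b(X)=\langle\AA'_i,\ZZ'_i\rangle$ and its perfect restriction) shows $\AA'_i\subset(\ZZ'_i)^{\perp}\subset(\ZZ_i^p)^{\perp}=\AA_i$, hence $(\AA'_i)^{\oplus\infty}=\AA_i$; and $\AA'_i=\AA_i\cap\D^b(X)$ follows from \cite[lemma~3.2]{Ku}. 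So the ``real work'' you anticipate is bypassed entirely by first descending to $\D^{\perf}(X)$, where compactness is available.
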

\begin{proof}
1. Follows from~\cite[prop. 4.2]{Ku}.

2. By 1, there is a semiorthogonal decomposition
$\D(X)=\langle(\AA^p)^{\oplus\infty},(\ZZ^p)^{\oplus\infty}\rangle$.
Therefore,
$$(\AA^p)^{\oplus\infty}=((\ZZ^p)^{\oplus\infty})^{\perp}=
(\ZZ^p)^{\perp}.$$

3.
For any $i$ consider a semiorthogonal decomposition $\D^{b}(X)=\langle\AA'_i,\ZZ'_i\rangle$, take
$$\AA_i^p=\AA'_i\cap \D^{\perf}(X), \quad \ZZ_i^p=\ZZ'_i\cap \D^{\perf}(X),\quad    \AA_i=(\AA_i^p)^{\oplus\infty}.$$
By~\cite[prop. 4.1]{Ku}, we get a decomposition $\D^{\perf}(X)=\langle\AA_i^p,\ZZ_i^p\rangle$.
It follows from~2 that $\AA'_i\subset(\ZZ'_i)^{\perp}\subset (\ZZ^p_i)^{\perp}=
\AA_i$. Hence, $\AA_i=(\AA'_i)^{\oplus\infty}$.
Finally, \cite[prop. 4.1]{Ku} implies that there is a decomposition
$\D^{\perf}(X)=\langle\AA_1^p,\ldots,\AA_n^p\rangle$. Using 1, we obtain a semiorthogonal decomposition $\D(X)=\langle\AA_1,\ldots,\AA_n\rangle$.
The last statement follows from~\cite[lemma 3.2]{Ku}.
\end{proof}

\begin{theorem}
\label{th_sodschemes}
Let $X, S$ and $p$ be as in Theorem~\ref{th_sodschemesbig}, let
$p_1$ and $p_2$ be two projections $X\times_SX\to X$.
Suppose that the category of perfect complexes  $\D^{\perf}(X)$ has
a semiorthogonal decomposition
$\langle\AA^{\perf}_1,\ldots,\AA^{\perf}_n\rangle$. Suppose
$\Hom(p_1^*F_j,p_2^*F_i)=0$ for all $1\le i<j\le n$ and objects
$F_i\in \AA^{\perf}_i,
    F_j\in\AA^{\perf}_j$.
Then $\D^{\perf}(S)$ has a semiorthogonal decomposition $\langle\BB^{\perf}_1,\ldots,\BB^{\perf}_n\rangle$, where
$\BB^{\perf}_k\subset\D^{\perf}(S)$ denotes a full subcategory whose objects are such~$H$ that $p^*H\in\AA^{\perf}_k$.
\end{theorem}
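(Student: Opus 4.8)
The plan is to reduce the statement about perfect complexes on $S$ to the statement about the unbounded derived category already handled in Theorem~\ref{th_sodschemesbig}, using the extension procedure of Lemma~\ref{lemma_extension}. First I would set $\AA_i=(\AA_i^{\perf})^{\oplus\infty}\subset\D(X)$; by part~1 of Lemma~\ref{lemma_extension} these form a semiorthogonal decomposition $\D(X)=\langle\AA_1,\dots,\AA_n\rangle$ with $\AA_i^{\perf}=\AA_i\cap\D^{\perf}(X)$. So what must be verified is that the hypothesis $\Hom(p_1^*F_j,p_2^*F_i)=0$ for $F_i\in\AA_i^{\perf}$, $F_j\in\AA_j^{\perf}$, $i<j$, implies that the comonad functor $T_p=p^*p_*$ is upper triangular with respect to $\langle\AA_1,\dots,\AA_n\rangle$, i.e. $T_p\AA_k\subset\langle\AA_1,\dots,\AA_k\rangle$.

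The key computation is to identify $T_p=p^*p_*$ on $\D(X)$ with a Fourier--Mukai–type functor along the two projections from the fibre product. Since $p$ is flat, flat base change in the Cartesian square with sides $p,p,p_1,p_2$ gives $p^*p_*\cong p_{2*}p_1^*$ as functors $\D(X)\to\D(X)$ (this is exactly the base-change isomorphism built into the cosimplicial category with base change, cf.\ Definition~\ref{def_basechangecat}). Then for $F\in\AA_k^{\perf}$ and any $G\in\AA_j$ with $j>k$ I would compute $\Hom_{\D(X)}(T_pF,G)=\Hom(p_{2*}p_1^*F,G)=\Hom(p_1^*F,p_2^!G)$; but one should instead argue on the $\Hom$ out of $T_pF$ into the left-orthogonal complement. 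Concretely: to show $T_pF\in\langle\AA_1,\dots,\AA_k\rangle$ it suffices to show $\Hom(T_pF,G)=0$ for all $G$ in $\langle\AA_{k+1},\dots,\AA_n\rangle=\AA_k^{\perp}\cap\cdots$, equivalently for $G\in\AA_j$ with $j>k$. Using $T_pF\cong p_{2*}p_1^*F$ and adjunction $(p_2^*,p_{2*})$ is the wrong variance; the right move is $\Hom(p_{2*}p_1^*F,G)$ — rewrite via the projection/adjunction $\Hom(p_1^*F,p_2^!G)$ only if $p_2$ is proper, which fails in general. So instead I would dualize: for $F$ perfect, $T_pF=p^*p_*F$, and I test against $G\in\AA_j$, $j>k$, by $\Hom(p^*p_*F,G)\cong\Hom(p_*F,p_*G)$? — this is also not quite adjunction. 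The clean route is: $\Hom_{\D(X)}(T_pF,G)=\Hom(p^*p_*F,G)=\Hom(p_*F,Rp_*G)$ is false; correct adjunction gives $\Hom(p^*H,G)=\Hom(H,Rp_*G)$, so with $H=Rp_*F$ we get $\Hom(T_pF,G)=\Hom(Rp_*F,Rp_*G)$. Now I would instead test whether $G$ lies in the left orthogonal by computing $\Hom(T_pF,G)$ directly from $T_pF\cong p_{2*}p_1^*F$ together with the splitting $\O_S\hookrightarrow Rp_*\O_X$ — but the cleanest statement is: the hypothesis $\Hom(p_1^*F_j,p_2^*F_i)=0$ is precisely the condition that makes $p_{2*}p_1^*$ upper triangular, once one expands $\Hom(p_{2*}p_1^*F_k,?)$ over a resolution; I expect to argue that for $F_k\in\AA_k^{\perf}$ the object $p_{2*}p_1^*F_k$ has no morphisms from any object computed from $\AA_j^{\perf}$, $j>k$, by reducing everything on $S$ back to $X$ via $p^*$ (fully faithful on the relevant subcategories by Theorem~\ref{th_descentfor}).

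Having established upper triangularity, the conclusion is immediate: Theorem~\ref{th_sodschemesbig} gives $\D(S)=\langle\BB_1,\dots,\BB_n\rangle$ with $\BB_k=\{H\in\D(S):p^*H\in\AA_k\}$. It then remains to show $\BB_k^{\perf}:=\BB_k\cap\D^{\perf}(S)$ equals $\{H\in\D^{\perf}(S):p^*H\in\AA_k^{\perf}\}$ and that these form a semiorthogonal decomposition of $\D^{\perf}(S)$. For a perfect $H$ on $S$, $p^*H$ is perfect on $X$, so $p^*H\in\AA_k\Leftrightarrow p^*H\in\AA_k\cap\D^{\perf}(X)=\AA_k^{\perf}$; hence the description of $\BB_k^{\perf}$ matches the statement. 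Finally, applying Theorem~\ref{th_descentfor} in its perfect version, $\D^{\perf}(S)\cong\D^{\perf}(X)_{\TT_p}$, and under this equivalence $\BB_k^{\perf}$ corresponds to $\AA_{k\TT_p}'$ with $\AA_k'=\AA_k^{\perf}$; so the semiorthogonal decomposition of $\D^{\perf}(S)$ follows from Theorem~\ref{th_sodt} applied with $\CC=\D(X)$, $\CC'=\D^{\perf}(X)$, using part~1 of Lemma~\ref{lemma_extension} to guarantee $\AA_k^{\perf}=\AA_k\cap\CC'$ and that $\CC'=\langle\AA_1^{\perf},\dots,\AA_n^{\perf}\rangle$ is the induced decomposition.

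The main obstacle I anticipate is the middle step: translating the bilinear hypothesis $\Hom(p_1^*F_j,p_2^*F_i)=0$ on $X\times_S X$ into the upper-triangularity of $T_p=p^*p_*\cong p_{2*}p_1^*$ on the $\oplus\infty$-completed categories $\AA_i\subset\D(X)$. This requires care because $p_2$ need not be proper, so one cannot freely use $(p_2^*,p_{2*})$-adjunction in the convenient direction; I expect to handle it by first checking $T_p\AA_k^{\perf}\subset\langle\AA_1^{\perf},\dots,\AA_k^{\perf}\rangle^{\oplus\infty}$ on generators and then using that $T_p$ commutes with arbitrary direct sums (as $p_*$ does here, $p$ being flat and of finite type) to extend to all of $\AA_k=(\AA_k^{\perf})^{\oplus\infty}$, together with the fact that $\langle\AA_1,\dots,\AA_k\rangle$ is closed under direct sums.
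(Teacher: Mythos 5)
Your overall architecture is exactly the paper's: extend $\langle\AA_1^{\perf},\dots,\AA_n^{\perf}\rangle$ to $\D(X)=\langle\AA_1,\dots,\AA_n\rangle$ with $\AA_i=(\AA_i^{\perf})^{\oplus\infty}$ via Lemma~\ref{lemma_extension}.1, prove that $T_p=p^*p_*$ is upper triangular, and apply Theorem~\ref{th_sodt} with $\CC=\D(X)$, $\CC'=\D^{\perf}(X)$; the final passage from generators to all of $\AA_i$ using compactness of perfect complexes and the fact that $T_p$ commutes with direct sums is also right. But the central step, which you yourself flag as the obstacle, is left undone, and your attempts at it go in the wrong direction. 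The error is in the orthogonality test: with the convention $\Hom(\AA_j,\AA_i)=0$ for $j>i$, the subcategory $\langle\AA_1,\dots,\AA_k\rangle$ is the \emph{right} orthogonal $\langle\AA_{k+1},\dots,\AA_n\rangle^{\perp}$, i.e.\ membership of $T_pF_i$ is detected by vanishing of $\Hom(F_j,T_pF_i)$ for $F_j\in\AA_j$, $j>i$ --- morphisms \emph{into} $T_pF_i$, not out of it. Your proposal tests $\Hom(T_pF,G)$ for $G$ in the higher-indexed pieces, which is the left orthogonal and is not $\langle\AA_1,\dots,\AA_k\rangle$ in a merely semiorthogonal decomposition; this wrong orientation is precisely what forces you into $p_2^!$, properness of the projections, and the various dead ends you describe.

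Once the test is oriented correctly, the computation is a one-line chain of standard adjunctions and there is no properness issue at all: for $F_i\in\AA_i^{\perf}$, $F_j\in\AA_j^{\perf}$ with $i<j$,
\begin{equation*}
\Hom(F_j,T_pF_i)=\Hom(F_j,p^*p_*F_i)=\Hom(F_j,p_{1*}p_2^*F_i)=\Hom(p_1^*F_j,p_2^*F_i)=0,
\end{equation*}
where the second equality is flat base change for the square formed by $p,p,p_1,p_2$ (which you correctly identified) and the third is the ordinary adjunction $(p_1^*,p_{1*})$ --- a left adjoint applied on the source, available for any morphism. This recovers the hypothesis verbatim and closes the gap; the rest of your argument then goes through as in the paper.
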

\begin{proof}
According to Theorem~\ref{th_descentfor}, the category $\D^{\perf}(S)$ is equivalent to
$\D^{\perf}(X)_{\TT_p}$.  So we can use Theorem~\ref{th_sodt} about
semiorthogonal decompositions for descent categories.

Let us extend the semiorthogonal decomposition
$\D^{\perf}(X)=\langle\AA^{\perf}_1,\ldots,\AA^{\perf}_n\rangle$
to a semiorthogonal decomposition $\langle \AA_1,\ldots,\AA_n\rangle$ of
$\D(X)$, this is possible by Lemma~\ref{lemma_extension}.1.
Let us check that the functor $T_p=p^*p_*$ is upper triangular with respect to the decomposition $\langle
\AA_1,\ldots,\AA_n\rangle$.
One can show (see~\cite[prop. 4.2]{Ku}) that the category $\AA_i=(\AA^{\perf}_i)^{\oplus\infty}$ can be obtained from $\AA^{\perf}_i$
by adding arbitrary direct sums (once) and then consequently adding cones (many times). By hypothesis, for  $i<j$, $F_i\in\AA_i^{\perf}$,
$F_j\in\AA_j^{\perf}$ one has
$$\Hom(F_j,T_pF_i)=\Hom(F_j,p^*p_*F_i)=\Hom(F_j,p_{1*}p_2^*F_i)=\Hom(p_1^*F_j,p_2^*F_i)=0$$
(the second equality here is the flat base change formula).
For any families $F_i^{\a}\in\AA_i^{\perf}$,
$F_j^{\b}\in\AA_j^{\perf}$ one has
\begin{multline*}
\Hom\left(\bigoplus_{\b}F_j^{\b},T_p\left(\bigoplus_{\a}F_i^{\a}\right)\right)=
\prod_{\b}\Hom\left(F_j^{\b},\bigoplus_{\a}T_pF_i^{\a}\right)=\\
=\prod_{\b}\bigoplus_{\a}\Hom\left(F_j^{\b},T_pF_i^{\a}\right)=0,
\end{multline*}
because the objects $F_j$ are compact and the functor $T_p$ commutes with direct sums.
Taking cones does not spoil orthogonality, therefore the equality  $\Hom(F_j,T_pF_i)=0$ holds for all $F_i\in \AA_i, F_j\in \AA_j$.
It means that $T_p\AA_i\subset\langle\AA_1,\ldots,\AA_i\rangle$,
that is, $T_p$ is upper triangular.
To prove the theorem, we apply Theorem~\ref{th_sodt} to the categories $\CC=\D(X)$,
$\CC'=\D^{\perf}(X)$ and~$\AA_i$.
\end{proof}

\begin{theorem}
\label{th_sodschemescoh}
Let $X, S$ and $p$ be as in Theorem~\ref{th_sodschemesbig}.
Suppose that the bounded derived category of coherent sheaves $\D^b(X)$ has a semiorthogonal decomposition
$\langle\AA'_1,\ldots,\AA'_n\rangle$ into admissible subcategories.
Suppose that $\Hom(p_1^*F_j,p_2^*F_i)=0$ for all $1\le i<j\le n$ and objects $F_i\in \AA'_i,
    F_j\in\AA'_j$.
Then the category $\D^b(S)$ has  a semiorthogonal decomposition
$\langle\BB'_1,\ldots,\BB'_n\rangle$, where
$\BB'_k\subset\D^b(S)$ denotes a full subcategory, whose objects are such $H$ that $p^*H\in\AA'_k$.
\end{theorem}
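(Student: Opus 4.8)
The plan is to reduce Theorem~\ref{th_sodschemescoh} to Theorem~\ref{th_descentfor} and Theorem~\ref{th_sodt} in exactly the same way that Theorem~\ref{th_sodschemes} was proved, but using part~3 of Lemma~\ref{lemma_extension} in place of part~1 to pass from the bounded derived category to the unbounded one. First I would invoke Theorem~\ref{th_descentfor}, which gives an equivalence $\D^b(S)\cong\D^b(X)_{\TT_p}$, so that it suffices to produce a semiorthogonal decomposition of the comodule category $\D^b(X)_{\TT_p}$ with the prescribed components. By Lemma~\ref{lemma_extension}.3, the semiorthogonal decomposition $\D^b(X)=\langle\AA'_1,\ldots,\AA'_n\rangle$ into admissible subcategories extends to a semiorthogonal decomposition $\D(X)=\langle\AA_1,\ldots,\AA_n\rangle$ with $\AA_i=(\AA'_i)^{\oplus\infty}$ and $\AA'_i=\AA_i\cap\D^b(X)$. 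The goal is then to apply Theorem~\ref{th_sodt} with $\CC=\D(X)$ and $\CC'=\D^b(X)$.

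The key verification is that the functor $T_p=p^*p_*$ is upper triangular with respect to $\langle\AA_1,\ldots,\AA_n\rangle$; this is the step that mirrors the heart of the proof of Theorem~\ref{th_sodschemes}. For $i<j$ and $F_i\in\AA'_i$, $F_j\in\AA'_j$, the flat base change formula gives $\Hom(F_j,T_pF_i)=\Hom(F_j,p^*p_*F_i)=\Hom(F_j,p_{1*}p_2^*F_i)=\Hom(p_1^*F_j,p_2^*F_i)=0$ by hypothesis. Since (by~\cite[prop.~4.2]{Ku}) each $\AA_i$ is obtained from $\AA'_i$ by adjoining arbitrary direct sums once and then iterating cones, and since objects of $\AA'_j\subset\D^b(X)$ are compact in $\D(X)$ while $T_p$ commutes with direct sums, the vanishing $\Hom(F_j,T_pF_i)=0$ propagates first to arbitrary direct sums $\bigoplus_\b F_j^\b$ of objects of $\AA'_j$ against arbitrary direct sums $\bigoplus_\a F_i^\a$ of objects of $\AA'_i$ (using $\Hom(\bigoplus_\b F_j^\b,-)=\prod_\b\Hom(F_j^\b,-)$ and compactness to commute $\Hom(F_j^\b,-)$ past $\bigoplus_\a$), and then to cones on both sides, since taking cones preserves the vanishing of Hom. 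Hence $\Hom(F_j,T_pF_i)=0$ for all $F_i\in\AA_i$, $F_j\in\AA_j$ with $i<j$, which is precisely $T_p\AA_i\subset\langle\AA_1,\ldots,\AA_i\rangle$.

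It remains to check the hypotheses of Theorem~\ref{th_sodt} concerning $\CC'=\D^b(X)$: the functor $T_p$ is exact; the category $\D^b(X)_{\TT_p}$ is triangulated (this follows from Theorem~\ref{th_descentfor}, which identifies it with $\D^b(S)$, together with the compatibility of the two triangulated structures noted after Theorem~\ref{th_descentfor2}); and the decomposition $\D(X)=\langle\AA_1,\ldots,\AA_n\rangle$ restricts to the decomposition $\D^b(X)=\langle\AA'_1,\ldots,\AA'_n\rangle$ with $\AA'_i=\AA_i\cap\D^b(X)$, which is exactly the content of Lemma~\ref{lemma_extension}.3. Theorem~\ref{th_sodt} then yields a semiorthogonal decomposition $\D^b(X)_{\TT_p}=\langle\AA'_{1\TT_p},\ldots,\AA'_{n\TT_p}\rangle$. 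Transporting this along the equivalence $\Phi\colon\D^b(S)\xrightarrow{\sim}\D^b(X)_{\TT_p}$ of Theorem~\ref{th_descentfor} and unwinding Definition~\ref{def_dxt'}, the $k$-th component corresponds to those $H\in\D^b(S)$ with $p^*H=\Phi(H)$'s underlying object lying in $\AA'_k$, i.e.\ to $\BB'_k$ as defined in the statement. I expect the main obstacle — such as it is — to be bookkeeping: making sure that the admissibility hypothesis on the $\AA'_i$ is genuinely used (it is needed to apply Lemma~\ref{lemma_extension}.3, hence \cite[prop.~4.1]{Ku}, so that the extended decomposition restricts back correctly), and that the identification of the components $\AA'_{k\TT_p}$ with $\BB'_k$ is carried out via the explicit form of the comparison functor $\Phi$ rather than assumed. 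No essentially new analytic input beyond the proof of Theorem~\ref{th_sodschemes} is required.
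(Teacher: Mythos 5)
Your overall strategy is exactly the one the paper intends: the paper's entire proof of Theorem~\ref{th_sodschemescoh} is the single sentence ``the proof is analogous to the proof of the previous theorem, Lemma~\ref{lemma_extension}.3 is used,'' and your reduction via Theorem~\ref{th_descentfor}, Lemma~\ref{lemma_extension}.3 and Theorem~\ref{th_sodt} with $\CC=\D(X)$, $\CC'=\D^b(X)$ is that argument spelled out. However, there is one step in your verification of upper triangularity that is wrong as written: you assert that ``objects of $\AA'_j\subset\D^b(X)$ are compact in $\D(X)$.'' For a quasi-projective scheme $X$ the compact objects of $\D(\qcoh(X))$ are exactly the perfect complexes, so a bounded complex of coherent sheaves on a \emph{singular} $X$ (e.g.\ the structure sheaf of a singular point) is not compact, and the theorem is stated without any smoothness hypothesis. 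Consequently the step where you commute $\Hom(F_j^\b,-)$ past the infinite direct sum $\bigoplus_\a T_pF_i^\a$ does not go through with generators taken from $\AA'_j$.

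The repair is available inside the machinery you are already using, and is presumably what ``analogous'' means in the paper: in the proof of Lemma~\ref{lemma_extension}.3 the extension is constructed as $\AA_i=(\AA_i^p)^{\oplus\infty}$ with $\AA_i^p=\AA'_i\cap\D^{\perf}(X)$, i.e.\ $\AA_i$ is generated (by one round of arbitrary direct sums followed by cones, as in \cite[prop.~4.2]{Ku}) from \emph{perfect}, hence compact, objects of $\AA'_i$. Since $\AA_i^p\subset\AA'_i$, the hypothesis $\Hom(p_1^*F_j,p_2^*F_i)=0$ applies to these generators, the base-change computation gives $\Hom(F_j,T_pF_i)=0$ for $F_j\in\AA_j^p$, $F_i\in\AA_i^p$, and now the compactness argument is legitimate and propagates the vanishing to all of $\AA_j$ and $\AA_i$. (For the same reason you should not invoke \cite[prop.~4.2]{Ku} for the generation of $\AA_i$ from $\AA'_i$ itself: that description of $(-)^{\oplus\infty}$ is stated for compactly generated subcategories.) With this substitution the rest of your argument — triangulatedness of $\D^b(X)_{\TT_p}$ via Theorem~\ref{th_descentfor}, the restriction $\AA'_i=\AA_i\cap\D^b(X)$ from Lemma~\ref{lemma_extension}.3, and the identification of $\AA'_{k\TT_p}$ with $\BB'_k$ through the comparison functor — is correct and complete.
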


\begin{proof}
The proof is analogous to the proof of the previous theorem,
Lemma~\ref{lemma_extension}.3 is used.
\end{proof}

\section{Semiorthogonal decompositions, invariant under the group action}
\label{section_invarianceofsod}

In this section we introduce invariant semiorthogonal decompositions of the category $\D^{\perf}(X)$ with respect to the action of an affine algebraic group~$G$ on a scheme $X$. We will show that the functor $p^*p_*$ (where $p$ is a canonical morphism $X\ra X\quot G$) is upper triangular with respect to a semiorthogonal decomposition $\D^{\perf}(X)=\langle\AA^{\perf}_1,\ldots,\AA^{\perf}_n\rangle$ if and only if $p_2^*\AA^{\perf}_i=a^*\AA^{\perf}_i$ for all
$i$ in a certain sense (as before, we use $a$ and $p_2\colon G\times X\to X$ to denote the action morphism and the projection onto the second factor).

\begin{lemma}
\label{lemma_bcdecomp} Let $X$ be a quasi-projective and $Y$ be an affine scheme over $\k$. Suppose the category of perfect complexes $\D^{\perf}(X)$ admits a semiorthogonal decomposition
$\langle\AA^{\perf}_1,\ldots,\AA^{\perf}_n \rangle$. Then the following semiorthogonal decompositions take place:
\begin{align*}
\D(X)&=\langle\AA_1,\ldots,\AA_n\rangle,\\
\D(Y\times X)&=\langle p_2^*\AA_1,\ldots,p_2^*\AA_n\rangle,\\
\D^{\perf}(Y\times X)&=\langle
p_2^*\AA^{\perf}_1,\ldots,p_2^*\AA^{\perf}_n\rangle,
\end{align*}
where
$\AA_i=(\AA^{\perf}_i)^{\oplus\infty}$ and the category  $p_2^*\AA_k$ is generated as a triangulated subcategory in
$\D(Y\times X)$ by objects of the form $p_2^*F_k$, $F_k\in\AA_k$.
\end{lemma}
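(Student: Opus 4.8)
The plan is to prove the three decompositions in order, deducing the second and third from the first together with a base change argument.

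First I would establish the decomposition $\D(X)=\langle\AA_1,\ldots,\AA_n\rangle$: this is exactly Lemma~\ref{lemma_extension}.1, so nothing new is needed, and simultaneously it gives $\AA^{\perf}_i=\AA_i\cap\D^{\perf}(X)$. Next, for the category $\D^{\perf}(Y\times X)$, I would argue as follows. Since $Y$ is affine, the morphism $p_2\colon Y\times X\to X$ is affine (indeed flat of finite Tor-dimension), so $p_2^*$ is exact, sends $\D^{\perf}(X)$ to $\D^{\perf}(Y\times X)$, and has the property that $p_{2*}p_2^*F\cong F\otimes_{\k}\O(Y)$ (pushforward along the affine morphism). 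Semiorthogonality of the $p_2^*\AA^{\perf}_i$ is then immediate from flat base change: for $i<j$, $F_i\in\AA^{\perf}_i$, $F_j\in\AA^{\perf}_j$ we get $\Hom(p_2^*F_j,p_2^*F_i[k])=\Hom(F_j,p_{2*}p_2^*F_i[k])=\Hom(F_j,F_i[k]\otimes\O(Y))$, and the latter vanishes because $\O(Y)$ is a filtered colimit of finite free $\k$-modules and $\Hom(F_j,F_i[k])=0$. Generation: the objects $p_2^*F$, $F\in\D^{\perf}(X)$, together with the pullbacks of line bundles from $Y$ if needed, generate $\D^{\perf}(Y\times X)$ as a triangulated category; writing any $F\in\D^{\perf}(X)$ via its decomposition triangle into $\AA^{\perf}_1,\ldots,\AA^{\perf}_n$ and applying the exact functor $p_2^*$ shows these generating objects lie in $\langle p_2^*\AA^{\perf}_1,\ldots,p_2^*\AA^{\perf}_n\rangle$, so the decomposition of $\D^{\perf}(Y\times X)$ follows by the standard criterion (semiorthogonal collection of admissible subcategories generating the whole category).

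For the ``big'' version $\D(Y\times X)=\langle p_2^*\AA_1,\ldots,p_2^*\AA_n\rangle$, I would apply Lemma~\ref{lemma_extension}.1 to the just-obtained decomposition of $\D^{\perf}(Y\times X)$. This produces a decomposition of $\D(Y\times X)$ whose $k$-th component is $(p_2^*\AA^{\perf}_k)^{\oplus\infty}$, the minimal triangulated subcategory closed under arbitrary direct sums and containing $p_2^*\AA^{\perf}_k$. It then remains to identify $(p_2^*\AA^{\perf}_k)^{\oplus\infty}$ with the category $p_2^*\AA_k$ of the statement, i.e.\ the triangulated subcategory generated by all $p_2^*F_k$ with $F_k\in\AA_k=(\AA^{\perf}_k)^{\oplus\infty}$. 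One inclusion is clear since $p_2^*$ is exact and commutes with direct sums, so $p_2^*\AA_k\subseteq(p_2^*\AA^{\perf}_k)^{\oplus\infty}$; for the reverse, $p_2^*\AA_k$ already contains $p_2^*\AA^{\perf}_k$ and — being the image under the colimit-preserving exact functor $p_2^*$ of a subcategory closed under direct sums and cones, hence itself closed under those operations inside its essential image, but one must be slightly careful — I would instead note that $(p_2^*\AA^{\perf}_k)^{\oplus\infty}$ is by definition the smallest such subcategory, and since $p_2^*\AA_k$ is triangulated, closed under direct sums (because $p_2^*$ preserves them and $\AA_k$ is closed under them), and contains $p_2^*\AA^{\perf}_k$, we get $(p_2^*\AA^{\perf}_k)^{\oplus\infty}\subseteq p_2^*\AA_k$. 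Hence equality.

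The main obstacle is the bookkeeping around the ``$(-)^{\oplus\infty}$'' operation and commuting it past $p_2^*$: one needs $p_2^*$ to commute with arbitrary direct sums (true, since it is a left adjoint on the unbounded derived categories) and to be exact (true, since $p_2$ is flat), and one needs that the $p_2^*F_i$ for $F_i$ ranging over the $\AA^{\perf}_i$ still generate after passing to $\D^{\perf}(Y\times X)$ — here using that $Y$ is affine so that $\D^{\perf}(Y\times X)$ is generated by $\{p_2^*F \otimes q^*L\}$ with $L$ perfect on $Y$, and that $\O(Y)$-modules are built from $\k$-modules. Once those points are in place, the three decompositions and the identification $\AA^{\perf}_i=\AA_i\cap\D^{\perf}(X)$ fall out formally, and the same base-change computation that gives semiorthogonality over $Y$ is exactly the one that will later be used for the cosimplicial scheme $[X,G\times X,\ldots]$ when $Y$ is replaced by products of copies of $G$.
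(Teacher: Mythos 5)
Your proposal is correct and takes essentially the same route as the paper's proof: Kuznetsov's $(-)^{\oplus\infty}$ extension to pass between the perfect and unbounded decompositions, and the affineness of $Y$ (so that $\O_Y$ generates $\D^{\perf}(Y)$ and closure of the big subcategory under arbitrary direct sums gives closure under direct summands) to identify $p_2^*\AA_k$ with the triangulated subcategory generated by the objects $p_2^*F_k$. The only point to make explicit is that $p_2^*\AA^{\perf}_i$ must be defined as the \emph{thick} (direct-summand-closed) subcategory generated by the $p_1^*H\otimes p_2^*F_i$, $H\in\D^{\perf}(Y)$, since at the level of perfect complexes the summands cannot be recovered by the infinite-direct-sum trick, whereas in $\D(Y\times X)$ they can.
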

\begin{proof}
Most statements of this lemma are proved in the paper by Kuznetsov~\cite{Ku} about the base change for semiorthogonal decompositions. We recall his constructions in order to prove the following fact, which is specific for the case of an affine scheme $Y$: the category
$p_2^*\AA_k$ is generated by objects of the form $p_2^*F_k$, $F_k\in\AA_k$.

Define
$p_2^*\AA^{\perf}_i$ to be the subcategory in $\D^{\perf}(Y\times X)$, generated by objects of the form $p_1^*H\otimes p_2^*F_i$, where $H\in
\D^{\perf}(Y)$ and $F_i\in\AA^{\perf}_i$, using shifts, cones and taking direct summands. By~\cite[5.1]{Ku}, we get a semiorthogonal decomposition
$$\D^{\perf}(Y\times X)=\langle p_2^*\AA_1^{\perf},\ldots,p_2^*\AA_n^{\perf}\rangle.$$

Define $\AA_i$ as $(\AA^{\perf}_i)^{\oplus\infty}$, define $p_2^*\AA_i$
as $(p_2^*\AA^{\perf}_i)^{\oplus\infty}$. According to~\cite[prop. 4.2]{Ku}, these categories form semiorthogonal decompositions
$$\D(X)=\langle \AA_1,\ldots,\AA_n\rangle,\qquad\D(Y\times X)=\langle p_2^*\AA_1,\ldots,p_2^*\AA_n\rangle.$$

Now let us check that the category $p_2^*\AA_i$ is generated by the objects of the form
$p_2^*F_i$, $F_i\in\AA_i$, as a triangulated category.
First, all such objects lie in $p_2^*\AA_i$.
Then,   consider the triangulated subcategory $\D'$ in  $\D(Y\times X)$,
generated by the objects $p_2^*F_i$, $F_i\in\AA_i$, for all $1\le i\le n$. This category is closed under taking arbitrary
direct sums because
all $\AA_i$ are closed and the functor $p_2^*$ commutes with direct sums. Therefore, $\D'$ is closed under taking direct summands: they can be
expressed via countable direct sums and cones.
The category~$\D'$ contains all objects of the form $p_2^*F_i$ for
$F_i\in\AA_i^{\perf}$ and hence, all objects of the form $p_2^*F=p_1^*\O_Y\otimes p_2^*F$ for $F\in
\D^{\perf}(X)=\langle\AA_1^{\perf},\ldots,\AA_n^{\perf}\rangle$.
Since the scheme $Y$ is affine, the right orthogonal to $\O_Y$ in   $\D(Y)$ is zero. By a result of Ravenel and Neeman (see~\cite[2.1.2]{BvdB}), the object
$\O_Y$ generates the category $\D^{\perf}(Y)$ by taking shifts, cones and direct summands. The category $\D'$ is closed under direct summands, and the functor
$p_1^*(-)\otimes p_2^*F$ preserves shifts, cones and direct summands. Therefore, all object of the form $p_1^*H\otimes p_2^*F$ with
$H\in\D^{\perf}(Y)$, $F\in\D^{\perf}(X)$, lie in $\D'$.
Hence, it follows from~\cite[lemma 5.2]{Ku} that $\D'$ contains $\D^{\perf}(Y\times X)$.
Finally, since the category $\D'$ is closed under direct sums, it coincides with $\D(Y\times X)$.
Now apply~\cite[lemma 3.2]{Ku} to see that the category $p_2^*\AA_i$ is generated by objects of the form $p_2^*F_i$, $F_i\in\AA_i$.
\end{proof}

Suppose that an affine algebraic group $G$ acts on a scheme~$X$, let $p$ be the canonical morphism $X\ra X\quot G$.
Note that two morphisms $p_2$ and $a\colon G\times X\ra X$ are isomorphic in the sense that there is a commutative diagram
$$\xymatrix{
 G\times X \ar[rr]^{(g,x)\mapsto (g^{-1},gx)}_{\sim} \ar[rd]_{a} &&
G\times X \ar[ld]^{p_2} \\ &X.&}$$
Therefore, the previous lemma can be applied also to either of the maps
$p_2$ and $a\colon G\times
X\ra X$. For the same reasons the lemma holds for the maps
${\mu}\colon G\times G\ra G$, $p_{23},\mu\times 1,1\times a\colon
G\times G\times X\ra G\times X$ and $p_3,ap_{23},a(1\times
a)=a(\mu\times 1)\colon G\times G\times X\ra X$.

\medskip
We introduce the following definition:
\begin{definition}
\label{def_invariantsubcat}
For an action of an algebraic group $G$ on a  scheme $X$
and a subcategory $\AA^{\perf}\subset \D^{\perf}(X)$,
define $p_2^*\AA^{\perf}$ to be the smallest full triangulated subcategory in $\Dp(G\times X)$, containing all objects of the form $p_1^*H\otimes p_2^*F$ where $H\in \Dp(G), F\in \AA^{\perf}$ and closed under direct summands.
Likewise, define $a^*\AA^{\perf}$ to be the smallest full triangulated subcategory in $\Dp(G\times X)$, containing all objects of the form $p_1^*H\otimes a^*F$ where $H\in \Dp(G), F\in \AA^{\perf}$ and closed under direct summands.

We say that $\AA^{\perf}$ is \emph{invariant} under the action if the subcategories
 $p_2^*\AA^{\perf}$ and $a^*\AA^{\perf}$ in $\D^{\perf}(G\times X)$
coincide. We will also say in this case that the action \emph{preserves} $\AA^{\perf}$.
\end{definition}

\begin{predl}
\label{prop_Gpreserves} Suppose that the category of perfect complexes on $X$ admits a semiorthogonal decomposition $\langle\AA^{\perf}_1,\ldots,\AA^{\perf}_n\rangle$, let $\D(X)=\langle\AA_1,\ldots,\AA_n\rangle$ be the corresponding decomposition
of the unbounded derived category.
Then the following conditions are equivalent:
\begin{enumerate}
\item for any $1\le i<j\le n$ and $F_i\in \AA^{\perf}_i, F_j\in\AA^{\perf}_j$, one has
$\Hom(p_2^*F_j,a^*F_i)=0$,
\item the functor $T_p=p^*p_*$ is upper triangular with respect to the decomposition $\D(X)=\langle\AA_1,\ldots,\AA_n\rangle$,
\item $p_2^*\AA_i=a^*\AA_i$ for all $i$,
\item $p_2^*\AA^{\perf}_i=a^*\AA^{\perf}_i$ for all $i$.
\end{enumerate}
\end{predl}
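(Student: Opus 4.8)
The plan is to prove the cycle of implications $(1)\Rightarrow(3)\Rightarrow(4)\Rightarrow(1)$ together with the equivalence $(1)\Leftrightarrow(2)$, using the calculation $T_p=p^*p_*=p_{1*}p_2^*$ on the simplicial scheme $[X,G\times X,\ldots]$ and the description of the subcategories $p_2^*\AA_i$ and $a^*\AA_i$ furnished by Lemma~\ref{lemma_bcdecomp}. The first observation is that, since $X\quot G$ has $G\times X$ as the second term of its \v Cech nerve and the two face maps are $p_2$ and $a$, flat base change gives $p^*p_*F\cong p_{1*}\,a^*\!/p_2^*\ldots$; more precisely, for $F\in\D(X)$ one has a natural isomorphism $T_pF\cong p_{1*}(\alpha)$ where the relevant pullback of $F$ to $G\times X$ is taken along one face map while $p_{1}$ is the other. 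Consequently, for $F_i\in\AA_i^{\perf}$, $F_j\in\AA_j^{\perf}$ with $i<j$, adjunction $(p_1^*\dashv p_{1*})$ yields $\Hom(F_j,T_pF_i)\cong\Hom(p_2^*F_j,a^*F_i)$, and one then upgrades this from perfect generators to all of $\AA_i,\AA_j$ exactly as in the proof of Theorem~\ref{th_sodschemes}: $\AA_i$ is built from $\AA_i^{\perf}$ by one round of arbitrary direct sums followed by iterated cones, the objects of $\AA_j^{\perf}$ are compact, and $T_p$ commutes with direct sums, so the vanishing propagates. This establishes $(1)\Leftrightarrow(2)$ and will also be the engine for the remaining implications.

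For $(1)\Rightarrow(3)$: condition $(1)$ says that the subcategories $a^*\AA_i$ and $p_2^*\AA_j$ of $\Dp(G\times X)$ are semiorthogonal for $i<j$ in the same pattern; combined with Lemma~\ref{lemma_bcdecomp} applied to both maps $p_2$ and $a$ (legitimate by the isomorphism $G\times X\xrightarrow{\sim}G\times X$, $(g,x)\mapsto(g^{-1},gx)$, intertwining $a$ and $p_2$, as remarked after that lemma), we get two semiorthogonal decompositions $\Dp(G\times X)=\langle p_2^*\AA_1^{\perf},\ldots,p_2^*\AA_n^{\perf}\rangle=\langle a^*\AA_1^{\perf},\ldots,a^*\AA_n^{\perf}\rangle$. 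The key point is that condition $(1)$ forces these two decompositions to coincide componentwise: since both are semiorthogonal decompositions of the same category and $(1)$ gives $\Hom(p_2^*F_j,a^*F_i)=0$ for $i<j$, one shows by the usual triangle-filtration/induction argument on the length $n$ that $a^*\AA_i^{\perf}=p_2^*\AA_i^{\perf}$, hence (after passing to $(-)^{\oplus\infty}$) $a^*\AA_i=p_2^*\AA_i$. I expect this mutual-refinement step to be the main obstacle: one must check that semiorthogonality of the ``mixed'' pieces is exactly enough to conclude equality of the two filtrations, which is a short but slightly delicate diagram chase using uniqueness of semiorthogonal projections.

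The implication $(3)\Rightarrow(4)$ is immediate from $\AA_i^{\perf}=\AA_i\cap\D^{\perf}(X)$ and $p_2^*\AA_i^{\perf}=p_2^*\AA_i\cap\Dp(G\times X)$ (the latter again from Lemma~\ref{lemma_bcdecomp} and its analogue for $a$), and symmetrically for $a$. Finally $(4)\Rightarrow(1)$: if $p_2^*\AA_i^{\perf}=a^*\AA_i^{\perf}$ for every $i$, then for $i<j$ any $a^*F_i$ with $F_i\in\AA_i^{\perf}$ lies in $p_2^*\AA_i^{\perf}\subset\langle p_2^*\AA_1^{\perf},\ldots,p_2^*\AA_i^{\perf}\rangle$, which by semiorthogonality of the decomposition $\Dp(G\times X)=\langle p_2^*\AA_1^{\perf},\ldots,p_2^*\AA_n^{\perf}\rangle$ is right-orthogonal to $p_2^*\AA_j^{\perf}$; hence $\Hom(p_2^*F_j,a^*F_i)=0$, which is $(1)$. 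Assembling $(1)\Leftrightarrow(2)$, $(1)\Rightarrow(3)\Rightarrow(4)\Rightarrow(1)$ completes the proof.
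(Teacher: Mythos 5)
Your steps $(1)\Leftrightarrow(2)$, $(3)\Rightarrow(4)$ and $(4)\Rightarrow(1)$ are essentially the paper's own arguments and are fine (modulo the harmless notational slip $T_p=p_{1*}p_2^*$, where the two face maps $G\times X\to X$ are really $p_2$ and $a$). The problem is the step $(1)\Rightarrow(3)$, precisely the one you flag as delicate. You reduce it to the claim that two semiorthogonal decompositions $\langle B_1,\dots,B_n\rangle=\langle C_1,\dots,C_n\rangle$ of the same category satisfying only the one-sided mixed orthogonality $\Hom(B_j,C_i)=0$ for $i<j$ must coincide componentwise. That claim is false in general: if $\D=A_1\oplus A_2\oplus A_3$ is a direct sum of three mutually orthogonal subcategories, then $B_1=\langle A_1,A_2\rangle$, $B_2=A_3$ and $C_1=A_1$, $C_2=\langle A_2,A_3\rangle$ are two distinct semiorthogonal decompositions with $\Hom(B_2,C_1)=0$. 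No uniqueness-of-projections chase can close this, because the $k$-th component of a decomposition is characterized as ${}^{\perp}\langle C_1,\dots,C_{k-1}\rangle\cap\langle C_{k+1},\dots,C_n\rangle^{\perp}$, which needs orthogonality on \emph{both} sides; condition $(1)$ by itself only supplies one side.

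The missing input is exactly why the paper routes the argument through condition $(2)$: the functor $T_p=p^*p_*$ admits two base-change expressions, $T_p\cong p_{2*}a^*$ and $T_p\cong a_*p_2^*$, so upper triangularity yields, for $i<j$ and perfect $F_i,F_j$, both $\Hom(p_2^*F_j,a^*F_i)=\Hom(F_j,T_pF_i)=0$ and the mirror relation $\Hom(a^*F_j,p_2^*F_i)=\Hom(F_j,T_pF_i)=0$. (Alternatively, the mirror relations follow directly from $(1)$ via the involution $(g,x)\mapsto(g^{-1},gx)$ of $G\times X$, which interchanges $p_2$ and $a$ and hence identifies $\Hom(p_2^*F_j,a^*F_i)$ with $\Hom(a^*F_j,p_2^*F_i)$; you invoke this involution only to justify applying Lemma~\ref{lemma_bcdecomp} to $a$, not where you actually need it.) With both families of relations in hand, one gets $p_2^*F_k\in{}^{\perp}\langle a^*\AA_1,\dots,a^*\AA_{k-1}\rangle\cap\langle a^*\AA_{k+1},\dots,a^*\AA_n\rangle^{\perp}=a^*\AA_k$, and Lemma~\ref{lemma_bcdecomp} (the objects $p_2^*F_k$ generate $p_2^*\AA_k$) gives $p_2^*\AA_k\subset a^*\AA_k$, with the reverse inclusion by symmetry. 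After this repair your cycle of implications closes and coincides in substance with the paper's proof.
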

\noindent If these conditions are satisfied, we say that the action of the group \emph{preserves} the semiorthogonal decomposition.

\begin{proof}
1 $\Rightarrow$ 2 is, essentially, contained in the proof of Theorem~\ref{th_sodschemes} (with $p_2$ changed to $p_1$, and $a$ changed to $p_2$).

2 $\Rightarrow$ 3.
Take an object $F_k\in\AA_k$. For any  $i<k$ and
$F_i\in\AA_i$ we have $$\Hom(p_2^*F_k,a^*F_i)=\Hom(F_k,p_{2*}a^*F_i)=\Hom(F_k,T_pF_i)=0,$$
because
$T_pF_i\in \langle\AA_1,\ldots,\AA_i\rangle=\langle\AA_{i+1},\ldots,\AA_n\rangle^{\perp}$.
By Lemma~\ref{lemma_bcdecomp}, the objects $a^*F_i$ generate
$a^*\AA_i$ as a triangulated category, hence $p_2^*F_k\in\
^{\perp}a^*\AA_i$. Similarly, for $i>k$ and any $F_i\in\AA_i$ we have
$$\Hom(a^*F_i,p_2^*F_k)=\Hom(F_i,a_*p_2^*F_k)=\Hom(F_i,T_pF_k)=0.$$
As before, we obtain that $p_2^*F_k\in a^*\AA_i^{\perp}$. Finally,
$$p_2^*F_k\in ^{\perp}\langle a^*\AA_1,\ldots,a^*\AA_{k-1}\rangle \cap \langle a^*\AA_{k+1},\ldots,a^*\AA_{n}\rangle^{\perp}=a^*\AA_k.$$
By Lemma~\ref{lemma_bcdecomp}, the objects $p_2^*F_k$ generate the category $p_2^*\AA_k$,
hence $p_2^*\AA_k\subset a^*\AA_k$.
In the same way it is proved that $a^*\AA_k\subset p_2^*\AA_k$.

3 $\Rightarrow$ 4.
By Lemma~\ref{lemma_extension}.1,
$$p_2^*\AA^{\perf}_i=p_2^*\AA_i\cap\D^{\perf}(G\times X)=
a^*\AA_i\cap\D^{\perf}(G\times X)=a^*\AA^{\perf}_i.$$

4 $\Rightarrow$ 1. We have $p_2^*F_j\in p_2^*\AA^{\perf}_j=a^*\AA^{\perf}_j$ and $a^*F_i\in a^*\AA^{\perf}_i$.
Since $a^*\AA^{\perf}_i$ and  $a^*\AA^{\perf}_j$ are semiorthogonal, we get $\Hom(p_2^*F_j,a^*F_i)=0$.
\end{proof}

\begin{remark}
Note that conditions 1 and 2 refer to the whole semiorthogonal decomposition, while conditions 3 and  4 express invariance of a semiorthogonal decomposition through invariance of its components.
\end{remark}

\begin{remark}
There are analogs of Lemma~\ref{lemma_bcdecomp} and Proposition~\ref{prop_Gpreserves} for the bounded derived categories instead of the categories of perfect complexes, see~\cite[th. 5.6]{Ku} and Lemma~\ref{lemma_extension}.3.  The components of semiorthogonal decompositions in these analogs are supposed to be admissible.
\end{remark}

\begin{remark}
An action of a finite group $G$ preserves the decomposition $\D^{\perf}(X)=\langle\AA^p_1,\ldots,\AA^p_n\rangle$ if and only if
$g^*\AA^p_i=\AA^p_i$ for any $g\in G$ and all~$i$.
Indeed, in this case the product $G\times X$ is a disjoint union $\bigsqcup_{g\in G}X$ of several copies of
$X$, and the maps $p_2, a\colon G\times X\ra X$ have the form $(1_X)_{g\in
G}$ and $(g)_{g\in G}$ respectively. The subcategories $p_2^*\AA^p_i$ and
$a^*\AA^p_i$ in  $\D^{\perf}(G\times X)$ are the categories
$\bigoplus_{g\in G}\AA^p_i$ and $\bigoplus_{g\in G}g^*\AA^p_i$.  They coincide if and only if $g^*\AA^p_i=\AA^p_i$ for all $g$ and $i$.
\end{remark}

\section{Descent for semiorthogonal decompositions: equivariant categories}\label{section_descentforequiv}

In this section we prove main theorems that allow to construct a semiorthogonal decomposition of the derived category of equivariant sheaves on a scheme starting from a semiorthogonal decomposition of the derived category of sheaves that is preserved by the action.

For an object $F\in \D^G(X)$, denote by $\overline{F}\in \D(X)$
the object, obtained from~$F$ by forgetting of the equivariant structure.

\begin{theorem}
\label{th_sodforequivbig}
Suppose that an affine group scheme $G$ of finite type over a field   $\k$ acts on a quasi-projective scheme $X$ over $\k$. Suppose that $G$
is linearly reductive.
Suppose a semiorthogonal decomposition $$\D(X)=\langle\AA_1,\ldots,\AA_n\rangle$$
is given such that for any $1\le i<j\le n$ and $F_i\in \AA_i,\, F_j\in\AA_j$ one has
$\Hom(p_2^*F_j,a^*F_i)=0$. Denote by~$\BB_i$ the full subcategory in $\D^G(X)$, consisting of objects $F$ such that ${\overline F}\in \AA_i$:
$$\BB_i=\{F\in \D^G(X)\mid {\overline F}\in \AA_i\}.$$
Then there is a semiorthogonal decomposition $$\D^G(X)=\langle\BB_1,\ldots,\BB_n\rangle.$$
\end{theorem}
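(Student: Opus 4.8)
The plan is to reduce Theorem~\ref{th_sodforequivbig} to Theorem~\ref{th_sodt} applied to the comonad $\TT_G$ on $\D(X)$. First I would invoke Theorem~\ref{th_descentfor2}: since $G$ is linearly reductive, the comparison functor gives an equivalence $\Phi\colon \D^G(X)\xra{\sim}\D(X)_{\TT_G}$, and under this equivalence the subcategory $\BB_i=\{F\in\D^G(X)\mid \overline F\in\AA_i\}$ corresponds exactly to $\AA_{i\TT_G}$ in the sense of Definition~\ref{def_dxt'}, because the forgetful functor $\D^G(X)\to\D(X)$ is identified with the composition of $\Phi$ with the forgetful functor $\D(X)_{\TT_G}\to\D(X)$, i.e. with $P^*$. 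So it suffices to produce a semiorthogonal decomposition $\D(X)_{\TT_G}=\langle\AA_{1\TT_G},\ldots,\AA_{n\TT_G}\rangle$.

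Next I would check the hypotheses of Theorem~\ref{th_sodt} with $\CC=\CC'=\D(X)$ and $T=T_G=p^*p_*$, where $p\colon X\to X\quot G$. The functor $T_G$ is exact (it is a composition of derived functors), and the category $\D(X)_{\TT_G}$ is triangulated in the sense of Definition~\ref{def_quasitriang} by the remark following Theorem~\ref{th_descentfor2} (the triangulated structure transported from $\D^G(X)$ coincides with the one from Definition~\ref{def_quasitriang}). It remains to verify that $T_G$ is upper triangular with respect to $\D(X)=\langle\AA_1,\ldots,\AA_n\rangle$. For this I would use Proposition~\ref{prop_Gpreserves}: the hypothesis $\Hom(p_2^*F_j,a^*F_i)=0$ for $i<j$, $F_i\in\AA_i$, $F_j\in\AA_j$ is condition~(1) of that proposition (applied to the decomposition of the unbounded category, which is admissible since it arises by $\oplus\infty$-extension, or directly since the statement of Proposition~\ref{prop_Gpreserves} is phrased for $\AA_i$ as well), hence condition~(2) holds, i.e. $T_G$ is upper triangular. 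One subtlety: Proposition~\ref{prop_Gpreserves} as stated starts from a decomposition of $\D^{\perf}(X)$; here I am given a decomposition of $\D(X)$ directly. I would note that the implication (1)$\Rightarrow$(2) in its proof — which is just the base-change computation $\Hom(F_j,T_pF_i)=\Hom(p_1^*F_j,p_2^*F_i)$ together with closedness of upper-triangularity under the operations generating $\AA_i$ — goes through verbatim for the given $\AA_i$, using that the hypothesis is stated for \emph{all} $F_i\in\AA_i$, $F_j\in\AA_j$, not only perfect ones. Then Theorem~\ref{th_sodt} yields $\D(X)_{\TT_G}=\langle\AA_{1\TT_G},\ldots,\AA_{n\TT_G}\rangle$.

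Finally, transporting this decomposition along the equivalence $\Phi$ of Theorem~\ref{th_descentfor2}, and using the identification of $\Phi^{-1}(\AA_{i\TT_G})$ with $\BB_i$ noted above, gives the desired semiorthogonal decomposition $\D^G(X)=\langle\BB_1,\ldots,\BB_n\rangle$. I expect the main obstacle to be the bookkeeping of the identification $\Phi^{-1}(\AA_{i\TT_G})=\BB_i$: one must check that an object $F$ of $\D^G(X)$ lies in $\BB_i$ precisely when the comodule $\Phi(F)=(p^*F,h)$ has underlying object $p^*F$ in $\AA_i$ — which holds because $p^*$ is the forgetful functor and $\overline F=p^*F$ — and, conversely, that every comodule in $\AA_{i\TT_G}$ is $\Phi$ of such an $F$, which is exactly the essential surjectivity of $\Phi$ restricted to $\BB_i$. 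This is routine given that $\Phi$ is an equivalence, but it is the point where the precise definitions of $\Kern$, $\CC'_{\TT}$, and the comparison functor have to be matched up carefully.
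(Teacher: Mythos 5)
Your proposal is correct and follows essentially the same route as the paper: identify $\D^G(X)$ with $\D(X)_{\TT_G}$ via Theorem~\ref{th_descentfor2}, verify upper triangularity of $T_G=p^*p_*=p_{2*}a^*$ by the adjunction/base-change computation $\Hom(F_j,T_GF_i)=\Hom(p_2^*F_j,a^*F_i)=0$, and apply Theorem~\ref{th_sodt} with $\CC=\CC'=\D(X)$. Your observation that the hypothesis is stated for all objects of $\AA_i$, so the compactness and $\oplus^\infty$-extension arguments of Theorem~\ref{th_sodschemes} are not needed here, is exactly the right simplification.
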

\begin{proof}
Let $p\colon X\ra X\quot G$ be the natural morphism of
stacks. By Theorem~\ref{th_descentfor2}, $\D^G(X)$ is equivalent to the descent category $\D(X)_{\TT_G}$ of comodules over the comonad $\TT_G=(p^*p_*,\e,\delta)$ on $\D(X)$. The proof of Theorem~\ref{th_sodschemes} implies that the functor $T_G=p^*p_*=p_{2*}a^*$ is upper triangular with respect to the initial semiorthogonal decomposition. Now Theorem~\ref{th_sodt} applied to the categories $\CC=\CC'=\D(X)$, the decomposition $\D(X)=\langle\AA_1,\ldots,\AA_n\rangle$ and the comonad $\TT_G$ proves our theorem.
\end{proof}

\begin{theorem}
\label{th_sodforequiv}
Let $X$ and  $G$ be as in Theorem~\ref{th_sodforequivbig}.
Suppose that in a semiorthogonal decomposition
$$\D^{\perf}(X)=\langle\AA^{\perf}_1,\ldots,\AA^{\perf}_n\rangle$$
for any $1\le i\le n$ one has $p_2^*\AA^{\perf}_i=a^*\AA^{\perf}_i$. Denote
$$\BB^{\perf}_i=\{F\in \D^{\perf,G}(X)\mid \overline F\in \AA^{\perf}_i\}.$$
Then there is a semiorthogonal decomposition $$\D^{\perf,G}(X)=\langle\BB^{\perf}_1,\ldots,\BB^{\perf}_n\rangle.$$
\end{theorem}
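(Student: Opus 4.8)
The plan is to reduce Theorem~\ref{th_sodforequiv} to Theorem~\ref{th_sodt} exactly as was done for Theorem~\ref{th_sodforequivbig}, but now paying attention to the ``small'' category $\D^{\perf}(X)$ inside the ``big'' category $\D(X)$. First I would invoke Proposition~\ref{prop_Gpreserves}: the hypothesis $p_2^*\AA^{\perf}_i=a^*\AA^{\perf}_i$ for all $i$ is condition (4), hence condition (1) holds, i.e.\ $\Hom(p_2^*F_j,a^*F_i)=0$ for $i<j$ and $F_i\in\AA^{\perf}_i$, $F_j\in\AA^{\perf}_j$. Next I would extend the given semiorthogonal decomposition of $\D^{\perf}(X)$ to one of $\D(X)=\langle\AA_1,\ldots,\AA_n\rangle$ with $\AA_i=(\AA^{\perf}_i)^{\oplus\infty}$ by Lemma~\ref{lemma_extension}.1; this lemma also gives $\AA^{\perf}_i=\AA_i\cap\D^{\perf}(X)$, which is precisely the compatibility between the decompositions of $\CC=\D(X)$ and $\CC'=\D^{\perf}(X)$ required in Theorem~\ref{th_sodt}.

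Second, I would verify that $T_G=p^*p_*$ is upper triangular with respect to $\langle\AA_1,\ldots,\AA_n\rangle$. This is the same computation as in the proof of Theorem~\ref{th_sodschemes}: using the factorization $p^*p_*=p_{2*}a^*$ for the morphism $X\ra X\quot G$ (the simplicial scheme~(\ref{equation_X/G})) and flat base change, one has for $i<j$, $F_i\in\AA^{\perf}_i$, $F_j\in\AA^{\perf}_j$ that $\Hom(F_j,T_GF_i)=\Hom(F_j,p_{2*}a^*F_i)=\Hom(p_2^*F_j,a^*F_i)=0$, which vanishes by the previous paragraph. Since the $F_j\in\AA^{\perf}_j$ are compact in $\D(X)$ and $T_G$ commutes with direct sums, this orthogonality passes to arbitrary direct sums, and then to cones; so $\Hom(F_j,T_GF_i)=0$ for all $F_i\in\AA_i$, $F_j\in\AA_j$, whence $T_G\AA_i\subset{}^{\perp}\langle\AA_{i+1},\ldots,\AA_n\rangle=\langle\AA_1,\ldots,\AA_i\rangle$.

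Third, I would note that $T_G$ is exact and that, by Theorem~\ref{th_descentfor2}, the comparison functor $\Phi\colon\D^{\perf,G}(X)\ra\D^{\perf}(X)_{\TT_G}$ is an equivalence; in particular $\D^{\perf}(X)_{\TT_G}$ is triangulated (its triangulated structure agrees with Definition~\ref{def_quasitriang} by the discussion after Theorem~\ref{th_descentfor2}). Now all hypotheses of Theorem~\ref{th_sodt} are met with $\CC=\D(X)$, $\CC'=\D^{\perf}(X)$, comonad $\TT=\TT_G$, and the decompositions above; it yields a semiorthogonal decomposition $\D^{\perf}(X)_{\TT_G}=\langle\AA'_{1\TT_G},\ldots,\AA'_{n\TT_G}\rangle$ where $\AA'_i=\AA^{\perf}_i$. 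Finally I would identify, through the equivalence $\Phi$, the category $\AA^{\perf}_{i\TT_G}$ (comodules $(F,h)$ with $F\in\AA^{\perf}_i$) with $\BB^{\perf}_i=\{F\in\D^{\perf,G}(X)\mid\overline F\in\AA^{\perf}_i\}$: this is immediate from the construction of $\Phi$, under which the underlying object of $\Phi(F)$ is $\overline F=p^*F$.

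The routine parts are the base-change identity and the direct-sum/cone bookkeeping in the upper-triangularity check; the only point demanding care is making sure the ``big/small'' compatibility hypothesis of Theorem~\ref{th_sodt} (namely that $\langle\AA_1,\ldots,\AA_n\rangle$ restricts to $\langle\AA^{\perf}_1,\ldots,\AA^{\perf}_n\rangle$ on $\CC'=\D^{\perf}(X)$ with $\AA^{\perf}_i=\AA_i\cap\D^{\perf}(X)$) is supplied — and this is exactly Lemma~\ref{lemma_extension}.1, so there is no real obstacle. The proof is therefore essentially a citation-and-assembly argument paralleling Theorems~\ref{th_sodschemes} and~\ref{th_sodforequivbig}.
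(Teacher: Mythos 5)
Your proposal is correct and follows essentially the same route as the paper: extend the decomposition to $\D(X)$ via Lemma~\ref{lemma_extension}.1, use Proposition~\ref{prop_Gpreserves} to get upper triangularity of $T_G=p^*p_*$, and apply Theorem~\ref{th_sodt} with $\CC=\D(X)$, $\CC'=\D^{\perf}(X)$ together with the equivalence of Theorem~\ref{th_descentfor2}. Your extra care in unwinding the implication $(4)\Rightarrow(1)\Rightarrow(2)$ and in identifying $\AA^{\perf}_{i\TT_G}$ with $\BB^{\perf}_i$ only makes explicit what the paper leaves to the cited proposition and the construction of the comparison functor.
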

\begin{proof}
Following Lemma~\ref{lemma_extension}.1, extend the initial semiorthogonal decomposition to a semiorthogonal decomposition $\D(X)=\langle\AA_1,\ldots,\AA_n\rangle$ such that
$\AA^{\perf}_i=\AA_i\cap\D^{\perf}(X)$. By Proposition~\ref{prop_Gpreserves}, the functor $p^*p_*$ is upper triangular
with respect to  this extended decomposition. To prove out theorem, apply Theorem~\ref{th_sodt} to the categories $\CC=\D(X)$, $\CC'=\D^{\perf}(X)$, the decomposition $\D(X)=\langle\AA_1,\ldots,\AA_n\rangle$ and the comonad $\TT_G$.
\end{proof}

\begin{theorem}
\label{th_sodforequivcoh}
Let $X$ and $G$ be as in Theorem~\ref{th_sodforequivbig}.
Suppose that a semiorthogonal decomposition into admissible subcategories 
$$\D^b(X)=\langle\AA'_1,\ldots,\AA'_n\rangle$$
is given, and
for any $1\le i\le n$ one has $p_2^*\AA'_i=a^*\AA'_i$. Denote
$$\BB'_i=\{F\in \D^b(\coh^G(X))\mid \overline F\in \AA'_i\}.$$
Then there is a semiorthogonal decomposition
$$\D^b(\coh^G(X))=\langle\BB'_1,\ldots,\BB'_n\rangle.$$
\end{theorem}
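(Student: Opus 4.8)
The plan is to mimic exactly the proofs of Theorems~\ref{th_sodforequivbig} and \ref{th_sodforequiv}, now using the bounded-coherent version of the machinery. First I would let $p\colon X\ra X\quot G$ be the canonical morphism of stacks and recall from Theorem~\ref{th_descentfor2} that the comparison functor gives an equivalence $\D^b(\coh^G(X))\cong \D^b(X)_{\TT_G}$, where $\TT_G=(p^*p_*,\e,\delta)$ is the comonad on the big category $\D(X)$. So the whole problem reduces to producing a semiorthogonal decomposition of the comodule category $\D^b(X)_{\TT_G}$ out of the given decomposition of $\D^b(X)$, and for that we want to invoke Theorem~\ref{th_sodt} with $\CC=\D(X)$ and $\CC'=\D^b(X)$.

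To apply Theorem~\ref{th_sodt} I need two inputs: an extension of the given decomposition $\D^b(X)=\langle\AA'_1,\ldots,\AA'_n\rangle$ to a semiorthogonal decomposition $\D(X)=\langle\AA_1,\ldots,\AA_n\rangle$ with $\AA'_i=\AA_i\cap\D^b(X)$, and upper-triangularity of $T_G=p^*p_*$ with respect to it. The first is supplied by Lemma~\ref{lemma_extension}.3, which is precisely the bounded-coherent analogue used here and requires the $\AA'_i$ to be admissible (hence the admissibility hypothesis in the statement). For the second, I would use the bounded analogue of Proposition~\ref{prop_Gpreserves} mentioned in the remark after that proposition (based on \cite[th.~5.6]{Ku} and Lemma~\ref{lemma_extension}.3): the hypothesis $p_2^*\AA'_i=a^*\AA'_i$ is the invariance condition, and it yields $\Hom(p_2^*F_j,a^*F_i)=0$ for $i<j$, hence via the flat base change identity $T_G=p_{2*}a^*$ and the compactness/direct-sum argument from the proof of Theorem~\ref{th_sodschemes} that $T_G\AA_i\subset\langle\AA_1,\ldots,\AA_i\rangle$. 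Then Theorem~\ref{th_sodt} gives that $\D^b(X)_{\TT_G}=\langle\AA'_{1\TT_G},\ldots,\AA'_{n\TT_G}\rangle$, and unwinding Definition~\ref{def_dxt'} together with the description of the equivalence identifies $\AA'_{i\TT_G}$ with $\BB'_i=\{F\in\D^b(\coh^G(X))\mid\overline F\in\AA'_i\}$.

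The main obstacle I expect is making sure the bounded-coherent version of the base-change/invariance package really does give upper-triangularity of $T_G$ on the \emph{big} category. The subtlety is that $T_G=p^*p_*$ does not preserve $\D^b(X)$, so one genuinely has to pass through the extended decomposition on $\D(X)$: one must check that the vanishing $\Hom(p_2^*F_j,a^*F_i)=0$ known for perfect/bounded generators propagates to all of $\AA_i=(\AA'_i)^{\oplus\infty}$, using that the $F_j$ are compact and $T_G$ commutes with arbitrary direct sums, exactly as in the proof of Theorem~\ref{th_sodschemes}. Once that is in place everything else is formal: the triangulated structure on $\D^b(X)_{\TT_G}$ is inherited from $\D^b(\coh^G(X))$ via Theorem~\ref{th_descentfor2} (matching Definition~\ref{def_quasitriang}), and the rest is bookkeeping. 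For this reason the proof is genuinely parallel to the one of Theorem~\ref{th_sodschemescoh} relative to Theorem~\ref{th_sodschemes}, and I would simply write: ``The proof repeats the proofs of Theorems~\ref{th_sodforequivbig} and \ref{th_sodforequiv}, using Lemma~\ref{lemma_extension}.3 and the bounded analogue of Proposition~\ref{prop_Gpreserves} in place of their perfect-complex counterparts.''
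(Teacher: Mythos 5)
Your proposal is correct and follows exactly the route of the paper, which proves this theorem by repeating the argument of Theorem~\ref{th_sodforequiv} with Lemma~\ref{lemma_extension}.3 and the bounded-derived-category analogue of Proposition~\ref{prop_Gpreserves} substituted for their perfect-complex counterparts, then applying Theorem~\ref{th_sodt} to $\CC=\D(X)$, $\CC'=\D^b(X)$ and invoking the equivalence of Theorem~\ref{th_descentfor2}. Your identification of the key subtlety (upper-triangularity of $T_G$ must be established on the extended decomposition of the big category, via compactness and commutation with direct sums) is exactly the point the paper's cited machinery handles.
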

\noindent
The proof is similar to the proof of Theorem~\ref{th_sodforequiv}, the analog of Proposition~\ref{prop_Gpreserves} for bounded derived categories is used.

\section{Derived descent theory for twisted equivariant sheaves}\label{section_twistedsheavesgeneralcase}

In this section we generalize results from~\cite{El_C4} about descent for equivariant derived categories to the case of twisted equivariant sheaves. We use the notions of cocycle on an algebraic group and of twisted equivariant sheaves, introduced in~\cite{El_C1}.
Let us recall necessary definitions and facts.

Let $G$ be a group scheme of finite type over a field $\k$ with the
structure morphism
$\mu\colon G\times G\ra G$.
\begin{definition}
\label{def_la}
A \emph{cocycle} on  $G$ is a pair $(\LL,\a)$ consisting of a line bundle $\LL$ on $G$ and of an isomorphism of bundles $\a\colon p_1^*\LL\otimes p_2^*\LL\ra\mu^*\LL$
on $G\times G$, satisfying the associativity condition:
the diagram of isomorphisms of sheaves
on the product $G\times G\times G$
$$\xymatrix{
p_1^*\LL\otimes p_2^*\LL\otimes p_3^*\LL \ar[rrr]^{ 1\otimes p_{23}^*\a} \ar[d]_{p_{12}^*\a\otimes  1}&&&
p_1^*\LL\otimes (\mu p_{23})^*\LL \ar[d]^{( 1\times \mu)^*\a} \\
(\mu p_{12})^*\LL\otimes p_3^*\LL \ar[rr]^{(\mu\times  1)^*\a} &&
(\mu(\mu\times  1))^*\LL & (\mu( 1\times \mu))^*\LL \ar@{-}[l]_{\sim}
}
$$
is commutative.
\end{definition}

Let $(\LL,\a)$ be a cocycle on $G$.
Suppose the group $G$ acts on a scheme~$X$ and $a\colon G\times X\ra X$ denotes the action map.
\begin{definition}
\label{def_Lasheaf}
We say that an \emph{$(\LL,\a)$-$G$-equivariant sheaf}
on $X$ is a pair $\FF=(F,\theta)$ of a sheaf $F$ on $X$ and an isomorphism
$$\theta\colon p_1^*\LL\otimes p_2^*F\ra a^*F$$
of sheaves on $G\times X$,
satisfying the compatibility condition:
the diagram of sheaves on $G\times G \times X$
$$\xymatrix{
p_1^*\LL\otimes p_2^*\LL\otimes p_3^*F \ar[rrr]^{ 1\otimes p_{23}^*\theta} \ar[d]_{p_{12}^*\a\otimes  1}&&&
p_1^*\LL\otimes (ap_{23})^*F \ar[d]^{( 1\times a)^*\theta} \\
(\mu p_{12})^*\LL\otimes p_3^*F \ar[rr]^{(\mu\times  1)^*\theta} &&
(a(\mu\times  1))^*F & (a( 1\times a))^*F \ar@{-}[l]_{\sim}
}
$$
is commutative.
A \emph{morphism} of $(\LL,\a)$-equivariant sheaves from $(F_1,\theta_1)$ to$(F_2,\theta_2)$ is a homomorphism of sheaves $F_1\ra F_2$ compatible  with the structure isomorphisms $\theta_1$ and~$\theta_2$.
\end{definition}

As a special case when $X$ is a point, we get a definition of an $(\LL,\a)$-representation of the group $G$.

We denote the abelian category of quasi-coherent $(\LL,\a)$-$G$-equivariant sheaves on $X$ by
$\qcoh^{G,\LL,\a}(X)$, and the category of coherent $(\LL,\a)$-$G$-equivariant sheaves on $X$ by  $\coh^{G,\LL,\a}(X)$.

There is a well-defined tensor multiplication on cocycles on the group~$G$, it makes the set of isomorphism classes of cocycles on $G$ an abelian group.
We denote tensor powers of a cocycle $(\LL,\a)$ by $(\LL^{r},\a^{r})$.
As in the case of finite groups, cocycles classify central extensions of a given group by $\Gm$, the multiplicative group of the field.
That is, there is a bijection between isomorphism classes of cocycles on an algebraic group~$G$ over $\k$ and isomorphism classes of central extensions of $G$ by $\Gm$:
\begin{equation*}
1\ra \Gm\ra \~G\xra{\pi} G\ra 1.
\end{equation*}

Twisted representations and twisted equivariant sheaves can be described with the help  of the extension of the group, corresponding to the given cocycle.

\begin{predl}[{\cite[prop. 1.9]{El_C1}}]
\label{prop_aboutHH}
In our assumptions, for any integer~$r$
there is an equivalence of categories 
$$\qcoh^{G,\LL^r,\a^r}(X) \cong \qcoh_{(r)}^{\~G}(X).$$
Here
$\qcoh_{(r)}^{\~G}(X)$ denotes a full subcategory in $\qcoh^{\~G}(X)$ formed by $\~G$-equivariant sheaves $F$ such that the subgroup $\Gm\subset \~G$ acts on $F$ with the weight $r$.
The similar statement holds for coherent sheaves.
\end{predl}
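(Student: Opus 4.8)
The plan is to construct the equivalence directly from the data of a cocycle $(\LL,\a)$ and its associated central extension $\~G$, following the template of the finite-group case. First I would recall that a cocycle $(\LL,\a)$ on $G$ produces a central extension $1\ra\Gm\ra\~G\xra{\pi}G\ra 1$ together with a $\~G$-equivariant trivialization of $\pi^*\LL$: concretely, $\~G$ is the total space of the $\Gm$-torsor associated with $\LL$, and the multiplication on $\~G$ is encoded by $\a$. Pulling $\LL$ back to $\~G$ along $\pi$ gives a canonically trivialized line bundle, and the associativity condition on $\a$ becomes the statement that this trivialization is compatible with the group law on $\~G$. This is exactly the input needed to convert a twisted equivariant structure with respect to $(G,\LL,\a)$ into an honest equivariant structure with respect to $\~G$.

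Next I would describe the functor in one direction. Given an $(\LL^r,\a^r)$-$G$-equivariant sheaf $(F,\theta)$ on $X$, I pull $F$ back along the projection $\~X\ra X$ where I abusively think of $X$ as having the induced $\~G$-action through $\pi$ (the $\Gm$ acts trivially on $X$ itself). The twisted structure isomorphism $\theta\colon p_1^*\LL^r\otimes p_2^*F\ra a^*F$, combined with the canonical trivialization of $\pi^*\LL$ on $\~G$ raised to the $r$-th power, produces an ordinary $\~G$-equivariant structure $\~\theta\colon \~p_2^*F\ra\~a^*F$ on $F$ as a sheaf on $\~G\times X$; the compatibility (cocycle) diagram for $\~\theta$ follows by combining the compatibility diagram in Definition~\ref{def_Lasheaf} with the associativity diagram for the trivialization of $\pi^*\LL$. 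Because the $\Gm$-factor in $\~G$ acts on $F$ through scaling by its $r$-th power (this is read off from how $\theta$ restricts over the fiber $\Gm\subset\~G$ lying above $e\in G$), the resulting equivariant sheaf lies in $\qcoh^{\~G}_{(r)}(X)$. Conversely, given a $\~G$-equivariant sheaf on which $\Gm$ acts with weight $r$, I descend the structure isomorphism along $\pi$: the weight-$r$ condition guarantees that the $\~G$-equivariant isomorphism differs from a $G$-equivariant one precisely by the $r$-th power of the trivializing section of $\pi^*\LL$, so twisting it back produces a $\theta$ satisfying the $(\LL^r,\a^r)$-compatibility condition. These two constructions are manifestly inverse to one another on objects, and they are the identity on the underlying sheaf $F$ and on morphisms, so they visibly give an equivalence of categories; the coherent case is identical since all functors involved are pullbacks that preserve coherence.

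The main obstacle I expect is bookkeeping rather than conceptual: one must be careful that the identifications of pullbacks of $\LL$ under the various structure and projection maps $G\times G\ra G$ (and their lifts to $\~G\times\~G$) are compatible, so that the compatibility hexagon for $\~\theta$ genuinely reduces to the compatibility hexagon for $\theta$ together with the associativity pentagon for $\a$. This amounts to a diagram chase intertwining Definition~\ref{def_la} and Definition~\ref{def_Lasheaf}, using that $\~G\times_G\~G$ maps to $G\times G$ and that $\a$ is exactly the gluing datum making $\Gm\times G\times G\ra\~G\times\~G\ra\~G$ well-behaved. Once that compatibility is pinned down, the verification that $\Gm$ acts with the correct weight and that the two functors are mutually inverse is routine. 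I would therefore organize the proof as: (i) recall the extension $\~G$ and the canonical trivialization $\phi\colon\pi^*\LL\xra{\sim}\O_{\~G}$ and its multiplicativity; (ii) define the functor $(F,\theta)\mapsto(F,\~\theta)$ and check the cocycle condition and the weight; (iii) define the inverse on weight-$r$ objects; (iv) observe both are identity on underlying sheaves and morphisms, hence an equivalence, citing \cite[prop. 1.9]{El_C1} for the case this reduces to when $X$ is a point and invoking the same argument in families over $X$.
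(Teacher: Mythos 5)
Your construction is correct and is the standard one: realizing $\~G$ as the $\Gm$-torsor of $\LL$ with multiplication given by $\a$, using the tautological multiplicative trivialization of $\pi^*\LL$ to pass between $(\LL^r,\a^r)$-twisted structures on $G\times X$ and weight-$r$ honest structures on $\~G\times X$, and descending back along the torsor via the weight condition. The paper itself gives no proof of this proposition --- it is imported verbatim from \cite[prop.~1.9]{El_C1} --- and your argument matches what that reference does; the only blemish is your final step (iv), where citing \cite[prop.~1.9]{El_C1} for the point case would be circular (that proposition \emph{is} the statement at hand), but this is harmless since your steps (i)--(iii) already supply the construction independently.
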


The subgroup $\Gm\subset \~G$
is central and acts trivially on $X$, therefore the following decompositions over characters of $\Gm$ take place:
$$\coh^{\~G}(X)=\bigoplus\nolimits_{r\in\Z}\coh^{\~G}_{(r)}(X),\qquad
\qcoh^{\~G}(X)=\prod\nolimits_{r\in\Z}\qcoh^{\~G}_{(r)}(X).$$
That is, the categories of twisted $G$-equivariant sheaves are equivalent to full subcategories (and, moreover, to direct factors)
of the category of equivariant sheaves with respect to a certain extension of the group  $G$ by $\Gm$.

\medskip
As well as usual equivariant sheaves, twisted equivariant sheaves can be viewed as objects of a descent category, related to a certain cosimplicial category. \begin{definition}
\label{def_twistedp}
Consider simplicial scheme~(\ref{equation_X/G})
$$[X,G\times X,G\times G\times X,\ldots,p_{\bul}].$$
Consider cosimplicial category 
$$[\qcoh(X),\qcoh(G\times X),\qcoh(G\times  G\times X),\ldots,p_{\bul}^*],$$
that consists of categories of quasi-coherent sheaves and pullback functors between them. Twist the functors $p_f^*$ as follows: for  a sheaf $F$ on $\underbrace{G\times \ldots\times G}_{m-1\ \text{times}}\times X$
and  a map $f\colon [1,\ldots,m]\to [1,\ldots n]$ we set 
\begin{equation}
P_f'^*F=p_1^*\LL\otimes\ldots\otimes p_{n-f(m)}^*\LL\otimes p_f^*F.
\end{equation}
Here $p_i, i=1,\ldots,n-f(m)$ denotes the projection of $G\times \ldots\times G\times X$
onto the $i$-th factor~$G$.

Now we need to define isomorphisms $\epsilon_{f,g}'\colon P_f'^*P_g'^*\xra{\sim} P_{fg}'^*$ for any pair of composable maps $f, g$.  Let $f\colon [1,\ldots,n]\ra [1,\ldots,k]$
and $g\colon [1,\ldots,m]\ra [1,\ldots,n]$ be morphisms in $\Delta$.
We have 
\begin{align*}
P'^*_fP'^*_g(-)&=P'^*_f\left(p_1^*\LL\otimes \ldots \otimes p_{n-g(m)}^*\LL\otimes p_g^*(-)\right)=\\
&=p_1^*\LL\otimes \ldots \otimes p_{k-f(n)}^*\LL\otimes
p_f^*p_1^*\LL\otimes\ldots\otimes p_f^*p_{n-g(m)}^*\LL\otimes
p_f^*p_g^*(-)=\\
&=p_1^*\LL\otimes \ldots \otimes p_{k-f(n)}^*\LL\otimes
(\mu p_{k-f(n)+1,\ldots,k-f(n-1)})^*\LL\otimes\ldots\\
&\quad\ldots\otimes
(\mu p_{k-f(g(m)+1)+1,\ldots,k-f(g(m))})^*\LL\otimes
p_f^*p_g^*(-),\\
P'^*_{fg}(-)&=p_1^*\LL\otimes\ldots\otimes p_{k-f(g(m))}^*\LL\otimes p_{fg}^*(-),
\end{align*}
where $p_{i,i+1,\ldots,j-1,j}$ denotes the projection of $G\times \ldots\times G\times X$ onto the product of $i,\ldots,j$-th factors, and $\mu$ denotes the multiplication $G\times\ldots\times G\ra G$.
Define the isomorphism $\epsilon_{f,g}'$ via the isomorphism $\epsilon_{f,g}\colon p_f^*p_g^*\ra p_{fg}^*$ and the isomorphisms
$$(\mu p_{i,i+1,\ldots,j-1,j})^*\LL\ra p_i^*\LL\otimes\ldots\otimes p_j^*\LL$$
obtained by iterating the structure isomorphism 
$\a\colon p_1^*\LL\otimes p_2^*\LL\ra \mu^*\LL$ on $G\times G$.

The resulting cosimplicial categories 
\begin{gather}
\label{equation_xgla}
[\qcoh(X),\qcoh(G\times X),\qcoh(G\times  G\times X),\ldots,P_{\bul}'^*],\\
\label{equation_xglabis}
[\D(X),\D(G\times X),\D(G\times  G\times X),\ldots,P_{\bul}'^*],
\end{gather}
and similar to them will be called  
cosimplicial categories, \emph{associated with the action of $G$ on $X$, twisted in the cocycle $(\LL,\a)$.}
\end{definition}

It can be directly seen from Definition~\ref{def_dxbar} of $\Kern$ 
and from the above construction that the descent category $\Kern$, associated with~\eqref{equation_xgla}, is equivalent to the category of $(\LL,\a)$-$G$-equivariant quasi-coherent sheaves on $X$.

\begin{predl}
\label{prop_twistedbc}
Categories~\eqref{equation_xgla}~and~\eqref{equation_xglabis} are cosimplicial categories with base change in the sense of Definition~\ref{def_basechangecat}.
\end{predl}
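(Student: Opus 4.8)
The plan is to verify the two axioms of Definition~\ref{def_basechangecat} for the twisted cosimplicial categories \eqref{equation_xgla} and \eqref{equation_xglabis}. The key observation is that the twisted pullback functors $P_f'^*$ differ from the untwisted ones $p_f^*$ only by tensoring with a line bundle pulled back from the product of $G$-factors, namely $P_f'^*F = p_1^*\LL\otimes\dots\otimes p_{n-f(m)}^*\LL\otimes p_f^*F$. Since the untwisted cosimplicial categories from Example~\ref{example_standard} are already known to be cosimplicial categories with base change (this is stated in the excerpt immediately after Definition~\ref{def_basechangecat}), one only needs to track how the line-bundle twists behave under adjunction and base change.

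First I would establish axiom (1), the existence of right adjoints $P_{f*}'$. Tensoring with a line bundle $\LL'$ is an exact equivalence whose inverse (hence right adjoint) is tensoring with $\LL'^{-1}$; the line bundle in question, $p_1^*\LL\otimes\dots\otimes p_{n-f(m)}^*\LL$, lives on the target $G\times\dots\times G\times X$ ($n$ copies of $G$). Therefore $P_f'^*$ is the composite of $p_f^*$ (which has right adjoint $p_{f*}$ by the untwisted statement) with this tensoring equivalence, so $P_{f*}'F = p_{f*}\bigl((p_1^*\LL^{-1}\otimes\dots\otimes p_{n-f(m)}^*\LL^{-1})\otimes F\bigr)$ is a right adjoint. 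In the derived setting \eqref{equation_xglabis} the same reasoning applies verbatim since $\LL$ is a line bundle, hence perfect, and $\otimes\LL$ is exact.

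Next I would check axiom (2), compatibility of the twisted base change morphism for an exact Cartesian square. For the untwisted square the base change isomorphism $p_f^*p_{g*}\xra{\sim}p_{g'*}p_{f'}^*$ holds by hypothesis. For the twisted functors, both $P_f'^*P_{g*}'$ and $P_{g'*}'P_{f'}'^*$ unwind into an untwisted base-change composite sandwiched between tensorings with line bundles pulled back from $G$-factors; the resulting line bundles on the two sides are identified via iterated applications of the cocycle isomorphism $\a\colon p_1^*\LL\otimes p_2^*\LL\xra{\sim}\mu^*\LL$, exactly as in the definition of $\epsilon_{f,g}'$. The projection formula (tensoring with a line bundle pulled back from the base commutes with pushforward) is what allows the twist to be moved through $p_{g*}$ and $p_{g'*}$, and flat base change for the line-bundle factors finishes the identification.

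The main obstacle I expect is the bookkeeping in axiom (2): one must show that the two a priori different line bundles obtained by restricting and pulling back $\LL$ along the various projections in the exact Cartesian square agree, and that the cocycle isomorphisms used to identify them are compatible with the untwisted base change morphism. This is the same associativity-of-$\a$ diagram chase that underlies the well-definedness of $\epsilon_{f,g}'$, now combined with base change; it is routine but notationally heavy, and is most cleanly handled by reducing to the generating cases of face and degeneracy maps (the small $n=m\pm1$ morphisms displayed after \eqref{equation_X/G}) and invoking functoriality. No genuinely new input beyond the cocycle condition, the projection formula, and the untwisted base change property is needed.
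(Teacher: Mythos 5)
Your proposal is correct and follows essentially the same route as the paper: the right adjoint is exhibited as $P'_{f*}=p_{f*}\bigl(p_1^*\LL^{-1}\otimes\dots\otimes p_{n-f(m)}^*\LL^{-1}\otimes-\bigr)$, and the twisted base change isomorphism is verified by reducing to the generating face and degeneracy maps and then combining the projection formula, flat base change for $p_f,p_g$, and the cocycle isomorphism $\a$ to match the line-bundle twists on the two sides. The only difference is cosmetic (you obtain the adjoint by composing equivalences rather than by the paper's direct $\Hom$ computation), so no further comment is needed.
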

\begin{proof}
One has to verify two conditions: the functors $P'^*_{\bul}$ have right adjoint functors and the canonical base change morphisms are isomorphisms for exact Cartesian squares.

One can check that the functor $$P'^*_{f}=p_1^*\LL\otimes\ldots\otimes p_{n-f(m)}^*\LL\otimes p_f^*(-)$$ has a right adjoint functor $$P'_{f*}=p_{f*}(p_1^*\LL^{-1}\otimes\ldots\otimes p_{n-f(m)}^*\LL^{-1}\otimes -).$$
Indeed,
\begin{multline*}
\Hom(p_1^*\LL\otimes\ldots\otimes p_{n-f(m)}^*\LL\otimes p_f^*F_1,F_2)\cong \\
\Hom(p_f^*F_1,p_1^*\LL^{-1}\otimes\ldots\otimes p_{n-f(m)}^*\LL^{-1}\otimes F_2)\cong\\
\Hom(F_1,p_{f*}(p_1^*\LL^{-1}\otimes\ldots\otimes p_{n-f(m)}^*\LL^{-1}\otimes F_2)).
\end{multline*}

The second condition says that for a commutative square of non-decreasing maps
of finite sets 
$$\xymatrix{[1,\ldots,m+n-r]&[1,\ldots,n] \ar[l]_-{f'}\\
[1,\ldots,m] \ar[u]_{g'} & [1,\ldots,r], \ar[l]_{f}
\ar[u]_g}$$
with injective $f$ and $f'$ and $[1,\ldots,m+n-r]=\mathrm{Im}\ f'\cup \mathrm{Im}\ g'$, the base change morphism 
$P'^*_fP'_{g*}\ra P'_{g'*}P'^*_{f'}$ is an isomorphism of functors. 
Decomposing $f$ and  $g$ into the composition of basic maps that correspond to 
faces and degenerations, we come down to several simple cases. 
That is, one can assume that $m=r+1$, $f=\delta_i$ (where $\delta_i(x)=x$ for $x<i$ and $\delta_i(x)=x+1$ for $x\ge i$) and that $n=r+1$, $g=\delta_j$ or $n=r-1$, $g=s_j$ (where $s_j(x)=x$ for $x\le j$ and $s_j(x)=j-1$ for $x>j$). Essentially, 
the cases differ depending on whether $i$ and  $j$ equals $1$. In every case the 
arguments are straightforward and based on using adjunction formula, the isomorphism~$\a$ and standard base change formula.
Suppose, for example, that $f=g=\delta_1$. Then either $f'=\delta_1$,
$g'=\delta_2$ or  $f'=\delta_2$,
$g'=\delta_1$. Consider the first case.

By Definition~\ref{def_twistedp} and the first part of the proof one has 
\begin{align*}
P'_{g'*}P'^*_{f'}&=p_{g'*}(p_1^*\LL\otimes p_{f'}^*(-)) &&\text{by the definition}\\
& \cong
p_{g'*}(p_1^*\LL\otimes p_2^*\LL\otimes p_2^*\LL^{-1}\otimes p_{f'}^*(-)) &&\\
& \cong
p_{g'*}((\mu p_{12})^*\LL\otimes p_{f'}^*p_1^*\LL^{-1}\otimes p_{f'}^*(-))
&&\\
\intertext{we used the isomorphism $\a\colon p_1^*\LL\otimes p_2^*\LL\to
\mu^*\LL$ of bundles on $G\times G$ and the equality $p_1p_{f'}=p_2$}
& \cong
p_{g'*}(p_{g'}^*p_1^*\LL\otimes p_{f'}^*(p_1^*\LL^{-1}\otimes -))&&\\
\intertext{because $p_{g'}(g_{r+2},g_{r+1}, \ldots, g_2,x_1)=(g_{r+2}g_{r+1},
\ldots, g_2,x_1)$ and $p_1p_{g'}=\mu p_{12}$}
& \cong
p_1^*\LL\otimes p_{g'*}p_{f'}^*(p_1^*\LL^{-1}\otimes -))&&
\text{by the projection formula for\ } p_{g'}\\
& \cong
p_1^*\LL\otimes p_f^*p_{g*}(p_1^*\LL^{-1}\otimes -))&&\\
\intertext{by the flat base change formula for the morphisms $p_f$ and $p_g$}
& = P'^*_fP'_{g*} &&\text {by the definition.}
\end{align*}
The other cases are treated in a similar way.
\end{proof}

Denote by $T_{G,\LL,\a}$ the comonads on $\qcoh(X)$ and $\D(X)$ associated with 
the cosimplicial categories \eqref{equation_xgla} and \eqref{equation_xglabis},
see Definition~\ref{def_simpl>comonad}.
By Propositions~\ref{prop_simpl>comonad} and~\ref{prop_twistedbc},
the descent categories 
$$\qcoh(X)^{G,\LL,\a}=\Kern([\qcoh(X),\qcoh(G\times X),\qcoh(G\times  G\times X),\ldots,P_{\bul}'^*])$$
and
$$\D(X)^{G,\LL,\a}=\Kern([\D(X),\D(G\times X),\D(G\times  G\times X),\ldots,P_{\bul}'^*])$$
are equivalent to the categories of comodules 
$\qcoh(X)_{T_{G,\LL,\a}}$ and $\D(X)_{T_{G,\LL,\a}}$ respectively.

\begin{example}
For small values of $n,m$ and for $f$, corresponding to faces and degenerations, the functors $P_f'^*$ from Definition~\ref{def_twistedp} have the form
$$\xymatrix{
{\qcoh(G\times G\times X)} \ar@<-5mm>[rr]^-{(e\times 1)^*}
\ar@<5mm>[rr]^-{(p_1\times e\times p_2)^*} &&
{\qcoh(G\times X)} \ar@<10mm>[ll]_-{p_1^*\LL\otimes p_{23}^*} \ar[ll]_-{(\mu\times 1)^*} \ar@<-10mm>[ll]_-{(1\times a)^*} \ar[rr]^-{(e\times 1)^*}
&& {\qcoh(X)} \ar@<5mm>[ll]_-{p_1^*\LL\otimes p_2^*} \ar@<-5mm>[ll]_-{a^*}} .$$
For $g\colon [1]\ra [1,2], g(1)=1$ and $f\colon [1,2]\ra [1,2,3], f(1)=1, f(2)=3$ the isomorphism between the functors $P_f'^*P_g'^*$ and $P_{fg}'^*$ is the following one:
\begin{multline*}P_f'^*P_g'^*({-})=(\mu\times 1)^*(p_1^*\LL\otimes p_2^*({-}))\xra{\sim}
p_{12}^*\mu^*\LL\otimes p_3^*({-})\xra{p_{12}^*(\a)^{-1}}\\
\ra p_{12}^*(p_1^*\LL\otimes p_2^*\LL)\otimes p_3^*({-})\xra{\sim}
p_1^*\LL\otimes p_2^*\LL\otimes p_3^*({-})
=P_{fg}'^*({-}).
\end{multline*}
\end{example}

It follows from the definition that the descent category $\Kern$, 
associated with~(\ref{equation_xgla}), is equivalent to the category of twisted 
$(\LL,\a)$-$G$-equivariant quasi-coherent sheaves on~$X$.

Remark that the cosimplicial category~(\ref{equation_xgla})
cannot be obtained from a morphism of stacks via the standard construction 
from Example~\ref{example_standard}.
 
Now we come to the derived descent for twisted sheaves.
Let $\D^{G,\LL,\a}(X)=\D(\qcoh^{G,\LL,\a}(X))$ be the unbounded derived category 
of $(\LL,\a)$-$G$-equivariant quasi-coherent sheaves on $X$.
Denote by $\D^b(\coh^{G,\LL,\a}(X))$ the bounded derived category of
$\coh^{G,\LL,\a}(X)$. Denote by $\D^{\perf,G,\LL,\a}(X)$ the category of $(\LL,\a)$-$G$-equivariant perfect complexes. This is the full subcategory in 
$\D^{G,\LL,\a}(X)$ formed by objects that are perfect complexes on $X$ after forgetting of the group action.

As in the non-twisted context, we aim to show that for a linearly reductive group 
$G$ the derived category of twisted equivariant sheaves $\D^{G,\LL,\a}(X)$ 
is equivalent to the descent category 
$$\D(X)^{G,\LL,\a}=\Kern([\D(X),\D(G\times X),\D(G\times G\times X),\ldots,P_{\bul}'^*]),$$
related to the cosimplicial category from Definition~\ref{def_twistedp}.
We deduce this from the fact that the comparison functor is an equivalence for the derived category of equivariant with respect to a certain extension of the group $G$ sheaves on $X$.
Let 
\begin{equation}
\label{equation_groupextension2}
1\ra\Gm\ra\~G\xra{\pi} G\ra 1
\end{equation}
be the central extension of the group $G$ by $\Gm$ corresponding to the cocycle
$(\LL,\a)$.
The category of quasi-coherent $\~G$-equivariant sheaves on $X$ is decomposed into the direct product of categories 
\begin{equation}
\label{equation_decompovercocycles}
\qcoh^{\~G}(X)\cong\prod_{i\in \Z}\qcoh^{\~G}_{(i)}(X)\cong\prod_{i\in \Z}\qcoh^{\~G,\LL^i,\a^i}(X),
\end{equation}
where $\qcoh^{\~G}_{(i)}$  denotes the subcategory consisting of those
$\~G$-equivariant sheaves, on which the subgroup $\Gm\subset \~G$ 
acts with the weight $i$, see Proposition~\ref{prop_aboutHH}.

\begin{theorem}
\label{th_descentforequivtwist}
Let $X$ be a quasi-projective scheme over a field $\k$,
acted by an affine group scheme~$G$ of finite type over $\k$. 
Suppose that the category of representations of~$G$ over $\k$ is semisimple. 
Then the comparison functors 
\begin{align*}
\D^{G,\LL,\a}(X)&\ra \D(X)^{G,\LL,\a},\\
\D^{\perf,G,\LL,\a}(X)&\to \D^{\perf}(X)^{G,\LL,\a},\\
\D^b(\coh^{G,\LL,\a}(X))&\to \D^b(X)^{G,\LL,\a}
\end{align*}
are equivalences. Here 
$\D^{\perf}(X)^{G,\LL,\a}$ and $\D^b(X)^{G,\LL,\a}$ denote the subcategories in the descent category
$$\D(X)^{G,\LL,\a}=\Kern([\D(X),\D(G\times X),\D(G\times G\times X),\ldots,P_{\bul}'^*]),$$ associated with the subcategories $\D^{\perf}(X), \D^b(X) \subset \D(X)$.
\end{theorem}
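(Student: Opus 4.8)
The plan is to reduce the twisted statement to the already established untwisted one (Theorem~\ref{th_descentfor2}) via the central extension \eqref{equation_groupextension2}. First I would observe that, by Proposition~\ref{prop_aboutHH} and the decomposition \eqref{equation_decompovercocycles}, the category $\qcoh^{G,\LL^r,\a^r}(X)$ is a direct factor of $\qcoh^{\~G}(X)$, cut out by the weight-$r$ condition on the central $\Gm$. Since $\Gm$ is central and acts trivially on $X$, this weight decomposition is compatible with all the structure morphisms of the simplicial scheme $[X,\~G\times X,\ldots]$, hence it passes to derived categories: $\D^{\~G}(X)\cong\prod_{r\in\Z}\D^{\~G}_{(r)}(X)$ and likewise for the perfect and bounded-coherent versions. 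Applying Theorem~\ref{th_descentfor2} to $\~G$ acting on $X$ (note $\~G$ is linearly reductive whenever $G$ is, because a representation of $\~G$ decomposes over $\Gm$-weights into representations of $G$, and a central extension by $\Gm$ of a linearly reductive group is linearly reductive), we get that $\D^{\~G}(X)\cong\D(X)_{\TT_{\~G}}$, and the same for $\D^{\perf,\~G}(X)$ and $\D^b(\coh^{\~G}(X))$.

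The key step is then to match the weight-$r$ summand of the descent category for $\~G$ with the twisted descent category $\D(X)^{G,\LL^r,\a^r}$. The simplicial scheme for $\~G\to\star$ is $[X,\~G\times X,\~G\times\~G\times X,\ldots]$. The subscheme-level $\Gm$-weight decomposition of $\qcoh(\~G\times\cdots\times\~G\times X)$ identifies the weight-$r$ part with $\qcoh(G\times\cdots\times G\times X)$, but the pullback functors $p_f^*$ for $\~G$, restricted to these weight components, do \emph{not} become the ordinary $p_f^*$ for $G$: the multiplication $\mu_{\~G}$ on the extension differs from $\mu_G$ precisely by the line bundle $\LL$ and the cocycle $\a$ encoding the extension class. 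Unwinding this, I expect exactly the twisted functors $P_f'^*$ of Definition~\ref{def_twistedp} to appear, with the comparison isomorphisms $\epsilon_{f,g}'$ coming from $\a$. So one gets an equivalence of cosimplicial categories (in the sense of Definition~\ref{def_functorcosimpl}) between the weight-$r$ part of $[\qcoh(\~G^{\times\bullet}\times X),p_\bullet^*]$ and the twisted cosimplicial category \eqref{equation_xgla} for $(\LL^r,\a^r)$, and similarly at the level of derived categories for \eqref{equation_xglabis}. By Lemma~\ref{lemma_psipsi} this equivalence is compatible with the associated comonads, and by Lemma~\ref{lemma_fffunctor} it induces an equivalence of comodule categories $\D(X)_{\TT_{\~G},(r)}\cong\D(X)_{T_{G,\LL^r,\a^r}}$, compatibly with the comparison functors and with passage to the subcategories $\D^{\perf}$ and $\D^b$.

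Combining the two steps: the comparison functor $\D^{G,\LL,\a}(X)\to\D(X)^{G,\LL,\a}$ is identified, under these equivalences, with the weight-$1$ component of the comparison functor $\D^{\~G}(X)\to\D(X)_{\TT_{\~G}}$, which is an equivalence by Theorem~\ref{th_descentfor2}; being a direct factor of an equivalence, it is an equivalence. The perfect and bounded-coherent cases follow the same way, using the three cases of Theorem~\ref{th_descentfor2} and the fact (immediate from Definition~\ref{def_dxt'} and the weight decomposition) that $\D^{\perf}(X)^{G,\LL,\a}$ and $\D^b(X)^{G,\LL,\a}$ correspond respectively to $\D^{\perf}(X)_{\TT_{\~G},(1)}$ and $\D^b(X)_{\TT_{\~G},(1)}$.

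The main obstacle is the bookkeeping in the second step: verifying carefully that the $\Gm$-weight-$r$ restriction of the genuine pullback functors for the simplicial scheme of $\~G$ coincides, including the coherence isomorphisms, with the twisted functors $P_f'^*$ and the isomorphisms $\epsilon_{f,g}'$ built from $\a$ in Definition~\ref{def_twistedp}. This is a matter of chasing how the extension cocycle enters the group law on $\~G^{\times k}$ and hence the pullbacks along the face and degeneracy maps; it reduces, as in the proof of Proposition~\ref{prop_twistedbc}, to the basic face and degeneracy maps, where it becomes a direct check using the definition of $\a$ as the extension class. Everything else is formal once this compatibility is in place.
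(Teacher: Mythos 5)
Your opening and closing moves match the paper exactly: pass to the central extension $1\ra\Gm\ra\~G\ra G\ra 1$, observe that $\~G$ is again linearly reductive, apply Theorem~\ref{th_descentfor2} to $\~G$, and extract the weight-one piece at the end. The gap is in your ``key step''. First, there is no $\Gm$-weight decomposition of the term-wise categories $\qcoh(\~G^{\times k}\times X)$: a sheaf on $\~G^{\times k}\times X$ carries no equivariant structure for the torsor action of $\Gm^{k}$, so ``the weight-$r$ part of $[\qcoh(\~G^{\times\bul}\times X),p_{\bul}^*]$'' is not a cosimplicial category, and the asserted equivalence with \eqref{equation_xgla} has nothing on one side. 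Only the descent category has weight pieces $\D_{(r)}$ (objects whose gluing isomorphism restricts to $t^r$ on $\Gm\times X$), and a priori these form merely an orthogonal family, not a decomposition. Second, Lemmas~\ref{lemma_psipsi} and~\ref{lemma_fffunctor} cannot deliver the equivalence you want: the natural comparison has $\Psi_0=\Id_{\D(X)}$, but since $(\pi\times 1)_*\O_{\~G\times X}\cong\bigoplus_{r\in\Z}p_1^*\LL^{-r}$ one gets $T_{\~G}\cong\bigoplus_{r\in\Z}T_{G,\LL^{r},\a^{r}}$, so the identity functor is not compatible with $T_{\~G}$ and any single twisted comonad. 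Third, and most substantively, even granting a fully faithful functor $\D(X)^{G,\LL^{r},\a^{r}}\ra\D_{(r)}$, the hard part is essential surjectivity: one must descend the gluing isomorphism $\theta$, a morphism in the derived category $\D(\~G\times X)$, along the faithfully flat map $\~G\times X\ra G\times X$, and descent of morphisms in derived categories is exactly what is not automatic in this subject. Deferring this to ``bookkeeping'' is where the proof is missing.

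The paper is structured precisely to avoid that descent problem. It takes the fully faithful functor $\Psi_r\colon\D(X)^{G,\LL^{r},\a^{r}}\ra\D_{(r)}$ already available from \cite{El_C1}, proves directly that the $\D_{(i)}$ are pairwise orthogonal (restricting the compatibility square of a morphism to $\Gm\times X$ yields $p_2^*f\cdot(t^i-t^j)=0$, hence $f=0$), and then runs a two-out-of-three argument on the commutative square relating $\prod_r\D^{G,\LL^{r},\a^{r}}(X)\cong\D^{\~G}(X)$ to $\prod_r\D_{(r)}\ra\D(X)^{\~G}$: since $\Phi_{\~G}$ is an equivalence and $\prod_r\D_{(r)}\ra\D(X)^{\~G}$ is fully faithful by orthogonality, each composite $\Psi_r\circ\Phi_r$ is an equivalence, and full faithfulness of $\Psi_r$ then forces $\Phi_r$ to be one. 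If you insist on your more direct route, you would have to prove essential surjectivity of $\Psi_r$ by hand, e.g.\ by computing $\Hom_{\D(\~G\times X)}(\pi^*A,\pi^*B)\cong\bigoplus_{r}\Hom_{\D(G\times X)}(A,B\otimes p_1^*\LL^{-r})$ and extracting the weight-$r$ component of $\theta$ together with its cocycle condition; that is genuinely more work than the paper's diagram chase.
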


\begin{proof}
Denote by 
$$\D_{(i)}\subset \D(X)^{\~G}=\Kern([\D(X),\D(\~G\times X),\D(\~G\times \~G\times X),\ldots,p_{\bul}^*])$$
the full subcategory, formed by those objects, on which the subgroup 
$\Gm\subset \~G$ acts with the weight $i$.
These subcategories are pairwise orthogonal, i.e.,
all morphisms between objects of different subcategories are zero.
Indeed, suppose $(F_1,\theta_1),(F_2,\theta_2)$ are the objects in $\D_{(i)},\D_{(j)}$ respectively, $i\ne j$,
and $f\colon (F_1,\theta_1)\to (F_2,\theta_2)$ is a morphism. It means that we have a commutative diagram of morphisms on $\~G\times X$:
$$\xymatrix{p_2^*F_1 \ar[rr]^{\theta_1}\ar[d]^{p_2^*f}&& a^*F_1\ar[d]^{a^*f}\\
p_2^*F_2 \ar[rr]^{\theta_2}&& a^*F_2.}$$
Restricting to $\Gm\times X\subset \~G\times X$, we get 
$$\xymatrix{p_2^*F_1 \ar[rr]^{\theta_1=t^i}\ar[d]^{p_2^*f}&& p_2^*F_1\ar[d]^{p_2^*f}\\
p_2^*F_2 \ar[rr]^{\theta_2=t^j}&& p_2^*F_2.}$$
We used that the maps $p_2$ and $a\colon \Gm\times X\to X$ coincide. Also, since $F_1\in \D_{(i)}$, the map $\theta_1$ in the above diagram is the multiplication in the function 
$t^i\in \Gamma(\Gm\times X,\O)$, and $\theta_2$ is the multiplication in $t^j$.
Since $p_2^*f$ is  a morphism in $\D(\Gm\times X)$, it commutes with multiplication in functions on $\Gm\times X$. Therefore we get 
$$p_2^*f\cdot (t^i-t^j)=0,$$
it implies that $f=0$.

The group extension~(\ref{equation_groupextension2}) induces 
a fully faithful functor on the descent categories 
$\D(X)^{G,\LL^i,\a^i}\ra\D(X)^{\~G}$ whose image lies in the subcategory~$\D_{(i)}$, see~\cite[proof of Prop. 1.9]{El_C1}. Denote the resulting functor 
$\D(X)^{G,\LL^i,\a^i}\ra\D_{(i)}$ by $\Psi_i$.
Decomposition~(\ref{equation_decompovercocycles}) yields the decomposition
\begin{equation*}
\D^{\~G}(X)\cong\prod_{i\in \Z}\D^{\~G,\LL^i,\a^i}(X).
\end{equation*}
for derived categories. 
It is included into the commutative diagram of categories and functors
$$\xymatrix{
{\prod_i\D^{G,\LL^i,\a^i}(X)} \ar[rrrr]^{\sim} \ar[d]^{\prod\Phi_i} &&&&{\D^{\~G}(X)} \ar[d]^{\Phi_{\~G}}_{\sim} \\
{\prod_i \D(X)^{G,\LL^i,\a^i}} \ar[rr]^{\prod\Psi_i} && {\prod_i\D_{(i)}} \ar[rr] && {\D(X)^{\~G}.}
}$$
The group $\~G$ is linearly reductive as an extension of $G$ by a torus.
Hence, the comparison functor $\Phi_{\~G}$ is an equivalence (see Theorem~\ref{th_descentfor2}). 
Since the subcategories $\D_{(i)}\subset \D(X)^{\~G}$ are orthogonal, the natural functor $\prod_i\D_{(i)}\ra \D(X)^{\~G}$ is fully faithful. 
Therefore, every composition of functors 
$$\D^{G,\LL^i,\a^i}(X)\xra{\Phi_i} \D(X)^{G,\LL^i,\a^i}\xra{\Psi_i} \D_{(i)}$$ 
is an equivalence as well. 
Now use that $\Psi_i$ is fully faithful to conclude that $\Phi_i$ is an  equivalence.

The statements about $\D^{\perf,G,\LL,\a}(X)$ and $\D^b(\coh^{G,\LL,\a}(X))$
are straightforward corollaries.
\end{proof}

\section{Functors between descent categories}

Results of Section~\ref{section_descentforequiv} allow to construct 
semiorthogonal decompositions of equivariant derived categories. 
Later we will describe components of these decompositions in three interesting examples.
In this section necessary technique is developed.

All schemes are supposed to be quasi-projective over a field $\k$.
By a group $G$ we understand an affine group scheme of finite type over~$\k$.

Let $X$ and $Y$ be schemes, acted by a group $G$, let $(\LL_X,\a_X)$ and
$(\LL_Y,\a_Y)$ be cocycles on the group $G$, let $\Psi\colon \D(X)\ra\D(Y)$
be a functor.
\begin{definition}
Suppose that there exists a functor $\Psi_{\bul}$ between 
cosimplicial categories $[\D(X),\D(G\times
X),\ldots,P_{X\bul}'^*]$ and $[\D(Y),\D(G\times
Y),\ldots,P_{Y\bul}'^*]$, which are associated with the action of $G$ on $X$,
twisted into cocycle $(\LL_X,\a_X)$, and the action of $G$ on $Y$,
twisted into cocycle $(\LL_Y,\a_Y)$ (see Definition~\ref{def_twistedp}), 
such that $\Psi_0=\Psi$. Then we say that the functor $\Psi$ \emph{is compatible with 
the (twisted) actions} of $G$ on $X$ and $Y$. 

Compatible functors between other versions of derived category or between abelian categories 
are defined analogously.
\end{definition}
By Lemma~\ref{lemma_psipsi}, if the functor 
$\Psi\colon\D(X)\ra\D(Y)$ is compatible with the actions of $G$ on $X$ and $Y$, twisted into cocycles $(\LL_X,\a_X)$ and $(\LL_Y,\a_Y)$, then it is compatible with the comonads $\TT_{G,\LL_X,\a_X}$ and
$\TT_{G,\LL_Y,\a_Y}$. Hence, by Lemma~\ref{lemma_fffunctor}, 
the functor $\Psi$ induces a functor on descent categories: 
 $\D(X)^{G,\LL_X,\a_X}\ra\D(Y)^{G,\LL_Y,\a_Y}$.

\begin{lemma}
\label{lemma_equivmorph}
Suppose $f\colon X\ra Y$ is an equivariant morphism of schemes over $\k$, 
acted by a group $G$, and $(\LL,\a)$ is a cocycle on $G$. 
Then the pullback functors $\qcoh(Y)\ra\qcoh(X)$,
$\D(Y)\ra\D(X)$, $\D^{\perf}(Y)\ra\D^{\perf}(X)$ are compatible with the action of $G$, twisted into $(\LL,\a)$.
Consequently, there are well-defined pullback functors between descent categories:
\begin{gather*}
\qcoh^{G,\LL,\a}(Y)\xra{f^*} \qcoh^{G,\LL,\a}(X),\\
\D(Y)^{G,\LL,\a}\xra{f^*} \D(X)^{G,\LL,\a},\\
\D^{\perf}(Y)^{G,\LL,\a}\xra{f^*} \D^{\perf}(X)^{G,\LL,\a},\\
\intertext{and for a morphism $f$ of finite $\Tor$-dimension,}
\D^b(Y)^{G,\LL,\a}\xra{f^*} \D^b(X)^{G,\LL,\a}.
\end{gather*}
The analogous statements hold for the pushforward functors 
$\qcoh(X)\ra\qcoh(Y)$ and $\D(X)\ra\D(Y)$.
\end{lemma}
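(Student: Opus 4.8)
The plan is to exhibit, for each pullback functor, a functor between the relevant twisted cosimplicial categories whose degree-zero component is that pullback, and then invoke the machinery of Lemmas~\ref{lemma_psipsi} and~\ref{lemma_fffunctor} to descend it. First I would set up notation: write $X_k$ and $Y_k$ for the $k$-th terms of the simplicial schemes~\eqref{equation_X/G} attached to the actions of $G$ on $X$ and on $Y$, that is $X_k=G^{\times k}\times X$ and $Y_k=G^{\times k}\times Y$; since $f$ is $G$-equivariant, it induces morphisms $f_k=\mathrm{id}_{G^{\times k}}\times f\colon X_k\ra Y_k$ commuting with all the structure maps $p_{\bul}$. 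The components of the prospective functor $\Psi_{\bul}$ are then simply the pullbacks $f_k^*\colon \D(Y_k)\ra\D(X_k)$ (resp.\ on $\qcoh$, $\D^{\perf}$, $\D^b$).

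Second, and this is the only real content, I would construct the coherence isomorphisms $\b_h\colon f_n^*\circ P_{Y,h}'^*\xra{\sim} P_{X,h}'^*\circ f_m^*$ for each non-decreasing $h\colon[1,\dots,m]\ra[1,\dots,n]$ and check their compatibility with composition in $\Delta$. Recall from Definition~\ref{def_twistedp} that $P_h'^*F=p_1^*\LL\otimes\dots\otimes p_{n-h(m)}^*\LL\otimes p_h^*F$, where here the $\LL$-factors sit on copies of $G$ and hence are pulled back to the \emph{same} line bundle on $X_n$ and $Y_n$ under $f_n$ (the cocycle $(\LL,\a)$ lives on $G$, not on $X$ or $Y$). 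Thus
\begin{equation*}
f_n^*P_{Y,h}'^*F=f_n^*\bigl(p_1^*\LL\otimes\dots\otimes p_{n-h(m)}^*\LL\otimes p_h^*F\bigr)\xra{\sim}p_1^*\LL\otimes\dots\otimes p_{n-h(m)}^*\LL\otimes f_n^*p_h^*F,
\end{equation*}
using that $f_n^*$ commutes with $\otimes$ and that the projections $p_i$ to factors of $G$ satisfy $p_i\circ f_n=p_i$; then the equality $p_h\circ f_n=f_m\circ p_h$ (valid because $f$ is equivariant, so $f$ intertwines the $G$-action and the structure maps of~\eqref{equation_X/G}) gives a canonical isomorphism $f_n^*p_h^*F\cong p_h^*f_m^*F$, yielding $\b_h$. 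The compatibility of the collection $(\b_h)$ with composition reduces, after unwinding $\epsilon_{f,g}'$ from Definition~\ref{def_twistedp}, to two facts: the compatibility of ordinary pullback isomorphisms $f^*p_{\bul}^*\cong p_{\bul}^*f^*$ with the $\epsilon_{f,g}$ of the untwisted cosimplicial category, and the fact that the twisting isomorphisms built from $\a$ are unaffected by $f_n^*$ since they only involve the $G$-factors. Both are routine diagram chases; the mild bookkeeping over which $\LL$-factors are indexed by $h(m)$ versus $(hg)(m)$ is the fiddliest point, but it is purely combinatorial and identical on the $X$-side and the $Y$-side.

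Third, once $\Psi_{\bul}=(f_k^*)_k$ is established as a functor between the twisted cosimplicial categories~\eqref{equation_xglabis}, Lemma~\ref{lemma_psipsi} gives that its degree-zero component $f^*\colon\D(X)\to\D(Y)$... --- more precisely $\Psi_0=f^*\colon\D(Y)\to\D(X)$ --- is compatible with the associated comonads $\TT_{G,\LL,\a}$ on $\D(Y)$ and on $\D(X)$, and then Lemma~\ref{lemma_fffunctor} produces the induced functor $\D(Y)^{G,\LL,\a}\to\D(X)^{G,\LL,\a}$ on descent categories, using the identification of the latter with categories of comodules from Propositions~\ref{prop_simpl>comonad} and~\ref{prop_twistedbc}. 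For the variants: $f^*$ sends $\qcoh$ to $\qcoh$ unconditionally, sends $\D^{\perf}$ to $\D^{\perf}$ unconditionally, and sends $\D^b$ to $\D^b$ precisely when $f$ has finite $\mathrm{Tor}$-dimension (as recalled in Section~\ref{section_pre}); in each case $\Psi_{\bul}$ restricts to a functor between the corresponding sub-cosimplicial categories because each $f_k=\mathrm{id}\times f$ inherits the same boundedness property as $f$, and Lemma~\ref{lemma_fffunctor} applied to the relevant subcategories $\CC'\subset\CC$ finishes it. The pushforward case is dual: for $f$ proper (or arbitrary for $\qcoh$ and for $\D$), the $f_{k*}$ assemble into a functor of cosimplicial categories by the same argument with the projection formula replacing the preservation of $\otimes$, and flat base change (axiom~(2) of Definition~\ref{def_basechangecat}, available here by Proposition~\ref{prop_twistedbc}) guarantees the coherence isomorphisms exist; I expect the construction of these coherence data for $f_{\bul*}$ --- juggling projection formula, base change, and the $\a$-twists simultaneously --- to be the main obstacle, though it is structurally parallel to the proof of Proposition~\ref{prop_twistedbc} itself.
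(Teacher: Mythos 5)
Your proposal is correct and follows essentially the same route as the paper: the paper's proof simply declares that the functors $\Psi_k=(1\times\dots\times 1\times f)^*$ together with the canonical isomorphisms $s^*t^*\cong(ts)^*$ define a functor between the twisted cosimplicial categories (the point you elaborate — that the $\LL$-factors live on the $G$-components and are untouched by $f_k^*$, while equivariance of $f$ makes $f_\bullet$ commute with the structure maps), and then invokes the general mechanism of Lemmas~\ref{lemma_psipsi} and~\ref{lemma_fffunctor} exactly as you do, treating $\qcoh$, $\D^{\perf}$, $\D^b$ and the pushforwards as analogous. You have merely written out the coherence bookkeeping that the paper dismisses with ``evidently.''
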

\begin{proof}
Evidently, the functors 
$$\Psi_k=(\underbrace{1\times 1\times\ldots\times 1}_{k\ \text{times}}\times f)^*$$
and the canonical isomorphisms of the form $s^*t^*\cong (ts)^*$
define correctly a pullback functor between cosimplicial categories 
\begin{gather}
\label{cats_DYGYDXGX}
[\D(Y),\D(G\times Y),\ldots,P_{Y\bul}'^*]\ra
[\D(X),\D(G\times X),\ldots,P_{X\bul}'^*],
\end{gather}
associated with the twisted action of $G$ on $X$ and $Y$.

For the categories $\qcoh, \D^{\perf}, \D^b$ and for pushforward functors the 
proof is analogous.
\end{proof}

\begin{lemma}
\label{lemma_tensorE}
Let $\EE $ be an object of the category $\D^{G,\LL_0,\a_0}(X)$. Then  tensor multiplication by $\EE $ is compatible with the action of $G$ on $X$, twisted into the cocycle $(\LL,\a)$, and the action of $G$ on $X$, twisted into the cocycle $(\LL\otimes \LL_0,\a\otimes \a_0)$. It induces 
the functor $$\D(X)^{G,\LL,\a}\ra\D(X)^{G,\LL\otimes \LL_0,\a\otimes \a_0}.$$

If, in addition, $\EE\in \D^{\perf,G,\LL_0,\a_0}(X)$, then also the functors $$\D^{\perf}(X)^{G,\LL,\a}\ra\D^{\perf}(X)^{G,\LL\otimes \LL_0,\a\otimes \a_0}
\qquad\text{and}\qquad \D^b(X)^{G,\LL,\a}\ra\D^b(X)^{G,\LL\otimes \LL_0,\a\otimes \a_0}$$
are induced.
\end{lemma}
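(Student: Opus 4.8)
The plan is to mimic the proof of Lemma~\ref{lemma_equivmorph}: exhibit a functor between the relevant cosimplicial categories whose degree-zero component is the tensor multiplication functor, and then invoke Lemmas~\ref{lemma_psipsi} and~\ref{lemma_fffunctor} to descend it. First I would fix the object $\EE=(E,\phi)\in\D^{G,\LL_0,\a_0}(X)$, where $E\in\D(X)$ and $\phi\colon p_1^*\LL_0\otimes p_2^*E\xra{\sim}a^*E$ is the structure isomorphism on $G\times X$. For each $k\ge 0$ I would define
$$\Psi_k=\Bigl(\,p_{k+1}^*\LL_0\otimes \ldots\otimes p_{k+1}^*\LL_0\,\Bigr)^{\text{(no)}}$$
— more precisely, $\Psi_k\colon\D(\underbrace{G\times\ldots\times G}_{k}\times X)\ra\D(\underbrace{G\times\ldots\times G}_{k}\times X)$ should be tensoring with the pullback of $E$ under the last projection to $X$, twisted by the appropriate external tensor power of $\LL_0$ coming from the structure datum $\phi$. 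Concretely $\Psi_0=(-)\otimes E$, and $\Psi_1=(-)\otimes(p_1^*\LL_0\otimes p_2^*E)$, which via $\phi$ is also $(-)\otimes a^*E$; in general $\Psi_k$ tensors with the object of $\D(G^k\times X)$ obtained by iterating $\phi$ (this is exactly the object that the descent datum of $\EE$ produces on the $k$-th term of the simplicial scheme~\eqref{equation_X/G}).

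Next I would construct the coherence isomorphisms $\b_f\colon\Psi_n\circ P'^{*}_{X,f}\xra{\sim}P'^{*}_{X,f}\circ\Psi_m$ for each non-decreasing $f\colon[1,\ldots,m+1]\ra[1,\ldots,n+1]$. Since both $P'^{*}_{X,f}$ (on the source twisted by $(\LL,\a)$) and $P'^{*}_{X,f}$ (on the target twisted by $(\LL\otimes\LL_0,\a\otimes\a_0)$) are, up to twisting by external powers of line bundles, the same underlying pullback $p_f^*$, the isomorphism $\b_f$ amounts to commuting $p_f^*$ past a tensor product (the canonical isomorphism $p_f^*(A\otimes B)\cong p_f^*A\otimes p_f^*B$) together with the iterated structure isomorphism of $\EE$ — i.e.\ the same data that makes $\phi$ a cocycle guarantees that pulling back $\Psi_m$'s twisting object along $p_f$ matches $\Psi_n$'s twisting object, once the $\LL_0$-factors are absorbed correctly into the cocycle $\a\otimes\a_0$ versus $\a$. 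The compatibility of the family $\{\b_f\}$ with composition in $\Delta$ then follows from the associativity (cocycle) conditions in Definitions~\ref{def_la} and~\ref{def_Lasheaf} for $(\LL_0,\a_0)$ and $\phi$, combined with the already-known compatibility of the $\epsilon'_{f,g}$ for the two ambient twisted cosimplicial categories. This produces a functor $\Psi_{\bul}$ between the cosimplicial categories associated with the $(\LL,\a)$-twisted and the $(\LL\otimes\LL_0,\a\otimes\a_0)$-twisted actions, with $\Psi_0=(-)\otimes\EE$, which by definition means $(-)\otimes\EE$ is compatible with these twisted actions.

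Having established compatibility, Lemma~\ref{lemma_psipsi} gives that $\Psi_0$ is compatible with the comonads $\TT_{G,\LL,\a}$ and $\TT_{G,\LL\otimes\LL_0,\a\otimes\a_0}$, and Lemma~\ref{lemma_fffunctor} (applied with $\CC'_i=\CC_i$) then yields the induced functor $\D(X)^{G,\LL,\a}\ra\D(X)^{G,\LL\otimes\LL_0,\a\otimes\a_0}$ on descent categories. For the last assertion, if $\EE\in\D^{\perf,G,\LL_0,\a_0}(X)$ then each twisting object lives in $\D^{\perf}$ of the corresponding product, tensoring with a perfect complex preserves $\D^{\perf}(X)$ and $\D^b(X)$, so $\Psi_{\bul}$ restricts to a functor between the sub-cosimplicial categories built from $\D^{\perf}$ and from $\D^b$; applying Lemma~\ref{lemma_fffunctor} with $\CC'_i=\D^{\perf}(G^i\times X)$ or $\D^b(G^i\times X)$ gives the two remaining induced functors.

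The main obstacle I expect is purely bookkeeping: keeping track of exactly which external tensor factors of $\LL_0$ appear on each term $G^k\times X$ and checking that the iterated structure isomorphism $\phi$ intertwines the twisting objects so that $\a\otimes\a_0$ (rather than a mismatched twist) governs the target cosimplicial category. Once one sets up the notation so that the ``twisting object'' on the $k$-th term is literally the $k$-th component of the descent datum corresponding to $\EE$ under the equivalence $\Kern\bigl([\qcoh(X),\ldots,P'^{*}_{\bul}]\bigr)\cong\qcoh^{G,\LL_0,\a_0}(X)$, every required commutativity is a formal consequence of the cocycle conditions already in hand, so there is no genuine new difficulty — only the routine verification that the diagrams defining a functor of cosimplicial categories commute.
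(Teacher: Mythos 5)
Your proposal follows essentially the same route as the paper's proof: the paper defines $\Psi_k(-)=p_1^*\LL_0\otimes\ldots\otimes p_k^*\LL_0\otimes(-)\otimes p_{k+1}^*E$ (exactly the twisting object you describe as the $k$-th component of the descent datum of $\EE$), builds the coherence isomorphisms $\b_f$ from iterations of $\a_0$ and of the structure isomorphism of $\EE$, checks compatibility via the cocycle conditions, and then invokes the same general machinery (Lemmas~\ref{lemma_psipsi} and~\ref{lemma_fffunctor}) to descend, with the perfect and bounded cases handled by the same restriction argument. The only blemish is the garbled displayed formula for $\Psi_k$, which your subsequent prose corrects.
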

\begin{proof}
Consider the cosimplicial categories 
$$[\D(X),\D(G\times X),\D(G\times G\times X),\ldots,P_{\bul}'^*]$$
and
$$[\D(X),\D(G\times X),\D(G\times G\times X),\ldots,P_{\bul}''^*],$$
associated with the actions of $G$ on $X$, twisted into the cocycles 
$(\LL,\a)$ and $(\LL\otimes \LL_0,\a\otimes\a_0)$ respectively, see Definition~\ref{def_twistedp}. Let $E$ be an object in $\D(X)$, obtained from $\EE$ by forgetting of the equivariant structure. 
Define the functors 
$$\Psi_k\colon \D(\underbrace{G\times\ldots\times G}_{k \text{ times}}\times X)\ra \D(\underbrace{G\times\ldots\times G}_{k \text{ times}}\times X),$$
by the formula 
$$\Psi_k(-)=p_1^*\LL_0\otimes\ldots\otimes p_k^*\LL_0\otimes (-) \otimes p_{k+1}^*E .$$
Define the isomorphism of functors 
$$\b_f\colon \Psi_n\circ P_f'^*\ra P_f''^*\circ \Psi_m$$
for any map $f\colon [1,\ldots,m+1]\ra [1,\ldots,n+1]$ in $\Delta$.
We have 
\begin{multline*}
\Psi_n\circ P_f'^*(-)=p_1^*\LL_0\otimes\ldots\otimes p_n^*\LL_0\otimes P_f'^*(-)\otimes p_{n+1}^*E =\\
=p_1^*\LL_0\otimes\ldots\otimes p_n^*\LL_0\otimes
p_1^*\LL\otimes\ldots\otimes p_{n+1-f(m+1)}^*\LL\otimes p_f^*(-)\otimes
p_{n+1}^*E ,
\end{multline*}
\begin{multline*}
P_f''^*\circ\Psi_m(-)=\\
=p_1^*(\LL\otimes\LL_0)\otimes\ldots\otimes p_{n+1-f(m+1)}^*(\LL\otimes\LL_0)\otimes p_f^*(p_1^*\LL_0\otimes\ldots\otimes p_m^*\LL_0\otimes - \otimes p_{m+1}^*E )\cong\\
\cong p_1^*\LL_0\otimes\ldots\otimes p_{n+1-f(m+1)}^*\LL_0\otimes
(\mu p_{n+1-f(m+1)+1,\ldots,n+1-f(m)})^*\LL_0\otimes\ldots\\
\ldots\otimes
(\mu p_{n+1-f(2)+1,\ldots,n+1-f(1)})^*\LL_0
\otimes p_1^*\LL\otimes\ldots\otimes p_{n+1-f(m+1)}^*\LL\otimes
p_f^*(-)\otimes\\
\otimes (ap_{n+1-f(1)+1,\ldots,n+1})^*E ,
\end{multline*}
where $p_{i,i+1,\ldots,j-1,j}$ denotes the projection of $G\times \ldots\times G\times X$ onto the product of $i,\ldots,j$-th factors,  $\mu$ denotes the multiplication $G\times\ldots\times G\ra G$, and
$a$ denotes the iterated action morphism
$G\times\ldots\times G\times X\ra X$.
To define $\b_f$, we use the isomorphisms 
$$(\mu p_{i,i+1,\ldots,j-1,j})^*\LL_0\ra p_i^*\LL_0\otimes\ldots\otimes p_j^*\LL_0,$$
obtained by iterating of the structure isomorphism 
$\a_0\colon p_1^*\LL_0\otimes p_2^*\LL_0\ra \mu^*\LL_0$ on $G\times G$,
and the isomorphism 
$$(ap_{n+1-f(1)+1,\ldots,n+1})^*E \ra p_{n+1-f(1)+1}^*\LL_0\otimes\ldots\otimes p_n^*\LL_0\otimes p_{n+1}^*E ,$$
obtained by consequent applying of the structure isomorphism 
$\theta\colon p_1^*\LL_0\otimes p_2^*E \ra a^*E $ on $G\times X$. 
The cocycle conditions for $\a_0$ and the compatibility of $\a_0$ with $\theta$
imply that the functors $\Psi_k$ and the isomorphisms $\b_f$ do define a functor 
between the cosimplicial categories. Therefore, the functor $\Psi_0=-\otimes E
\colon \D(X)\ra \D(X)$ is compatible with the twisted actions of $G$ on
$X$.

By the arguments from the beginning of this section, we get 
a functor $\D(X)^{G,\LL,\a}\ra \D(X)^{G,\LL\otimes \LL_0,\a\otimes \a_0}$ on the descent
categories. For the categories $\D^{\perf}(X)$ and $\D^b(X)$ the proof is analogous.
\end{proof}

\begin{predl}
\label{prop_kernel}
Let $X$ and $Y$ be quasi-projective schemes over $\k$, acted by a group scheme $G$
of finite type over $\k$. Let 
$\Psi_{E}\colon\D(X)\ra \D(Y)$ be the functor, defined by the kernel 
$E\in\D(X\times Y)$:
$$\Psi_{E}(-)=p_{2*}(p_1^*(-)\otimes E).$$
Suppose that there is an object  $\EE\in\D^{G,\LL_0,\a_0}(X\times Y)$, which is 
$E$ with the action of $G$ forgotten, suppose  $(\LL,\a)$ is a cocycle.

1. 
Then the functor $\Psi_{E}$ is compatible with the action of $G$ on $X$, twisted into $(\LL,\a)$, and the action of $G$ on $Y$, twisted into $(\LL\otimes
\LL_0,\a\otimes\a_0)$.

2. 
Let $\TT_{G,\LL,\a}$ and  $\TT_{G,\LL\otimes \LL_0,\a\otimes\a_0}$ be the comonads on $\D(X)$ and $\D(Y)$, associated with the action of $G$ on $X$, twisted into $(\LL,\a)$, and the action of $G$ on $Y$, twisted into $(\LL\otimes \LL_0,\a\otimes\a_0)$.
Then the functor 
$\Psi_{E}$ induces a functor on descent categories $\D(X)_{\TT_{G,\LL,\a}}\ra\D(Y)_{\TT_{G,\LL\otimes \LL_0,\a\otimes\a_0}}$.
If $\Psi_{E}$ is fully faithful, then the induced functor on the descent categories is also fully faithful.

3. 
Suppose $\Psi_{E}$ is fully faithful and sends $\D^{\perf}(X)$ to $\D^{\perf}(Y)$.
Suppose the image of the functor 
$\Psi_{E}\colon\D^{\perf}(X)\ra \D^{\perf}(Y)$ is a subcategory 
$\AA\subset \D^{\perf}(Y)$.
Then
 $\Psi_{E}$ induces a fully faithful functor from 
$\D^{\perf}(X)_{\TT_{G,\LL,\a}}$ to $\D^{\perf}(Y)_{\TT_{G,\LL\otimes \LL_0,\a\otimes\a_0}}$,
whose image is the subcategory $\AA_{\TT_{G,\LL\otimes \LL_0,\a\otimes\a_0}}\subset \D^{\perf}(Y)_{\TT_{G,\LL\otimes \LL_0,\a\otimes\a_0}}$.
Also, the subcategory $\AA$ is invariant under the action of $G$.
\end{predl}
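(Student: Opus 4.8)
The plan is to build everything on the general machinery of Sections~6--7: the statement is really a combination of Lemma~\ref{lemma_tensorE}, Lemma~\ref{lemma_equivmorph} and Lemma~\ref{lemma_fffunctor}, together with the observation that the kernel functor $\Psi_E$ factors through the external product. First I would prove part~1 by decomposing $\Psi_E$ as
$$\D(X)\xra{p_1^*}\D(X\times Y)\xra{-\otimes E}\D(X\times Y)\xra{p_{2*}}\D(Y),$$
where $p_1,p_2$ are the (equivariant) projections of $X\times Y$. The first and last arrows are pullback and pushforward along equivariant morphisms, hence compatible with the twisted actions by Lemma~\ref{lemma_equivmorph} (here $X\times Y$ carries the diagonal $G$-action, and the cocycle on $X\times Y$ is pulled back from $G$, so it is the same $(\LL,\a)$ throughout these two steps). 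The middle arrow is tensoring by the object $\EE\in\D^{G,\LL_0,\a_0}(X\times Y)$, which by Lemma~\ref{lemma_tensorE} is compatible with the twisted action on $X\times Y$ for cocycle $(\LL,\a)$ and the twisted action for cocycle $(\LL\otimes\LL_0,\a\otimes\a_0)$. Composing compatible functors of cosimplicial categories gives a compatible functor, so $\Psi_E$ is compatible with the action of $G$ on $X$ twisted into $(\LL,\a)$ and the action on $Y$ twisted into $(\LL\otimes\LL_0,\a\otimes\a_0)$; this is part~1. The one point requiring a little care is checking that the three intermediate cocycles match up correctly, i.e.\ that $p_1^*\LL$ and $p_2^*(\LL\otimes\LL_0)$ are consistent with the cocycle $\LL_0$ living on $X\times Y$ via $p_1$; this is a bookkeeping verification at the level of Definition~\ref{def_twistedp}.

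For part~2, once $\Psi_E$ is compatible with the twisted actions, Lemma~\ref{lemma_psipsi} says $\Psi_E$ is compatible with the associated comonads $\TT_{G,\LL,\a}$ and $\TT_{G,\LL\otimes\LL_0,\a\otimes\a_0}$, and then Lemma~\ref{lemma_fffunctor} (with $\CC_i'=\CC_i$) produces the induced functor on descent categories $\D(X)_{\TT_{G,\LL,\a}}\ra\D(Y)_{\TT_{G,\LL\otimes\LL_0,\a\otimes\a_0}}$. The claim that this induced functor is fully faithful whenever $\Psi_E$ is follows again from Lemma~\ref{lemma_fffunctor}: if $\Psi_E$ is fully faithful on all of $\D(X)$, then it is an equivalence onto its essential image $\AA\subset\D(Y)$, and the lemma gives that the induced functor is an equivalence onto $\AA_{\TT}\subset\D(Y)_{\TT}$, in particular fully faithful.

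For part~3, the argument is the same as part~2 but applied with the subcategories $\CC_1'=\D^{\perf}(X)\subset\CC_1=\D(X)$ and $\CC_2'=\D^{\perf}(Y)\subset\CC_2=\D(Y)$: by hypothesis $\Psi_E$ sends $\D^{\perf}(X)$ into $\D^{\perf}(Y)$ with essential image $\AA$, and $\Psi_E$ is fully faithful, so by the second half of Lemma~\ref{lemma_fffunctor} the induced functor $\D^{\perf}(X)_{\TT_{G,\LL,\a}}\ra\D^{\perf}(Y)_{\TT_{G,\LL\otimes\LL_0,\a\otimes\a_0}}$ is fully faithful with image $\AA_{\TT_{G,\LL\otimes\LL_0,\a\otimes\a_0}}$. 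The remaining assertion, that $\AA$ is invariant under the $G$-action in the sense of Definition~\ref{def_invariantsubcat}, follows from the compatibility of $\Psi_E$ with the twisted actions: the functor of cosimplicial categories $\Psi_\bullet$ restricts in degree~$1$ to a functor $\D^{\perf}(G\times X)\ra\D^{\perf}(G\times Y)$ commuting up to the isomorphisms $\b_f$ with both the $p_2^*$-type and the $a^*$-type structure functors, and since $p_2^*\D^{\perf}(X)=a^*\D^{\perf}(X)$ trivially (both equal all of $\D^{\perf}(X)$ when $\AA^{\perf}$ is the whole category — more precisely one applies this to the relevant subcategory), applying $\Psi_1$ carries $p_2^*\AA^{\perf}_X$ and $a^*\AA^{\perf}_X$ to $p_2^*\AA$ and $a^*\AA$ respectively, forcing $p_2^*\AA=a^*\AA$.

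I expect the main obstacle to be purely notational: organizing the cocycle bookkeeping in part~1 so that the three compatible-functor steps compose with matching twists, and then, for the invariance claim in part~3, unwinding exactly which subcategory of $\D^{\perf}(G\times X)$ one applies $\Psi_1$ to. The conceptual content is entirely contained in the cited lemmas; the work is in verifying that the hypotheses of Lemma~\ref{lemma_tensorE} and Lemma~\ref{lemma_equivmorph} apply to the pieces of the factorization of $\Psi_E$ with the correct cocycles.
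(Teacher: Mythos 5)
Your proposal is correct and follows essentially the same route as the paper: part~1 via the factorization $\Psi_E=p_{2*}\circ(-\otimes E)\circ p_1^*$ and Lemmas~\ref{lemma_equivmorph} and~\ref{lemma_tensorE}, parts~2 and~3 via Lemmas~\ref{lemma_psipsi} and~\ref{lemma_fffunctor}. The only place where you package things differently is the invariance of $\AA$ in part~3: you argue abstractly that the degree-one functor $\Psi_1\colon\Dp(G\times X)\to\Dp(G\times Y)$ intertwines the $p_2^*$- and $a^*$-type structure functors and that $p_2^*\Dp(X)=a^*\Dp(X)=\Dp(G\times X)$, whereas the paper runs an explicit chain of projection-formula and base-change isomorphisms. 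Note that the equality $p_2^*\Dp(X)=a^*\Dp(X)=\Dp(G\times X)$ is not ``trivial''\,---\,it is Lemma~\ref{lemma_bcdecomp}, resting on affineness of $G$\,---\,and your step ``$\Psi_1$ carries $p_2^*\Dp(X)$ onto the generators of $p_2^*\AA$'' silently uses that $\Psi_1$ is $\Dp(G)$-linear, i.e.\ $\Psi_1(p_1^*H\otimes-)\cong p_1^*H'\otimes\Psi_1(-)$; establishing this via the projection formula for the projection $G\times X\times Y\to G\times Y$ is exactly the computation the paper's proof carries out with the auxiliary functor $\Xi$, so you should supply it rather than leave it as bookkeeping.
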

\begin{proof}
1.~Compatibility of $\Psi_{E}$ with the twisted actions of $G$ on $X$ and~$Y$ follows from Lemma~\ref{lemma_equivmorph} and Lemma~\ref{lemma_tensorE}.

2.~By the definition of a functor, compatible with twisted actions of $G$ on $X$ and $Y$, $\Psi_{E}$ can be extended to a functor  between cosimplicial categories, associated with the twisted actions of $G$ on $X$ and~$Y$. Therefore, by Lemma~\ref{lemma_psipsi}, the functor $\Psi_{E}$ is compatible with the comonads $\TT_{G,\LL,\a}$ and $\TT_{G,\LL\otimes \LL_0,\a\otimes\a_0}$. By Lemma~\ref{lemma_fffunctor},
the functor $\Psi_{E}$ induces a functor on descent categories $\D(X)_{\TT_{G,\LL,\a}}\ra\D(Y)_{\TT_{G,\LL\otimes\LL_0,\a\otimes\a_0}}$. The latter functor is fully faithful if $\Psi_{E}$ is fully faithful.

3.~The first statement follows from Lemma~\ref{lemma_fffunctor}
applied to the categories $\D(X)$, $\D(Y)$, their subcategories $\D^{\perf}(X)$,
$\D^{\perf}(Y)$, comonads $\TT_{G,\LL,\a}$ and $\TT_{G,\LL\otimes \LL_0,\a\otimes\a_0}$, 
and the subcategory $\AA\subset\D^{\perf}(Y)$.
Now we will show that the subcategory $\AA=\Psi_E(\Dp(X))$ is invariant under the action.
Consider the commutative diagram (where $p_X$ and $p_Y$ denote the projections
onto $X$ and $Y$):
$$\xymatrix{G\times X  \ar@<1mm>[d]^a \ar@<-1mm>[d]_{p_X} & G\times X \times Y\ar[r]^-{p_{13}} \ar[l]_-{p_{12}}\ar@<1mm>[d]^a \ar@<-1mm>[d]_{p_{23}} &
G\times Y  \ar@<1mm>[d]^a \ar@<-1mm>[d]_{p_Y} \\
X& X\times Y \ar[r]^{p_2} \ar[l]_{p_1} & Y.}$$ 
By Definition~\ref{def_invariantsubcat} we need to check that 
the subcategories $p_Y^*\AA$ and
$a^*\AA$ in $\D^{\perf}(G\times Y)$ coincide. The category $\AA$ is formed by the objects 
$\Psi_E(F)=p_{2*}(p_1^*F\otimes E)$, where $F\in \Dp(X)$. Hence 
the category $p_Y^*\AA$ is generated by shifts, cones and direct summands by the objects of the form $p_1^*H\otimes
p_Y^*p_{2*}(p_1^*F\otimes E)$, where $H\in\D^{\perf}(G)$,
$F\in\D^{\perf}(X)$. One has
$$
p_1^*H\otimes p_Y^*p_{2*}(p_1^*F\otimes  E)\cong 
p_1^*H\otimes p_{13*}p_{23}^*(p_1^*F\otimes  E)\cong 
p_1^*H\otimes p_{13*}(p_{12}^*p_X^*F\otimes p_1^*\LL^*_0\otimes a^* E).
$$
Due to Lemma~\ref{lemma_bcdecomp}, $a^*\D^{\perf}(X)=\Dp(G\times X)$.
Therefore, the object $p_X^*F$ can be obtained from the objects of the form
$p_1^*H'\otimes a^*F'$, where $H'\in \Dp(G)$, $F'\in \Dp(X)$, by taking shifts, cones and  direct summands. Consider the functor 
$$\Xi(-)= p_1^*H\otimes p_{13*}(p_{12}^*(-)\otimes p_1^*\LL^*_0\otimes a^* E)
\colon \Dp(G\times X)\to \Dp(G\times Y),$$
it is exact and preserves direct summands. So, the object
$$p_1^*H\otimes
p_Y^*p_{2*}(p_1^*F\otimes E)\cong \Xi(p_X^*F)$$
can be obtained from the objects of the form
$$\Xi(p_1^*H'\otimes a^*F')$$
by taking shifts, cones and direct summands. 
Further, we have 
\begin{multline*}
\Xi(p_1^*H'\otimes a^*F')=
p_1^*H\otimes p_{13*}(p_{12}^*(p_1^*H'\otimes a^*F')\otimes p_1^*\LL^*_0\otimes a^* E)\cong\\
\cong
p_1^*H\otimes p_{13*}(p_{12}^*a^*F'\otimes p_1^*(\LL^*_0\otimes H')\otimes a^* E)\cong
p_1^*H\otimes p_{13*}(a^*(p_{1}^*F'\otimes  E)\otimes
p_{13}^*p_1^*(\LL^*_0\otimes H'))\cong \\
\cong p_1^*H\otimes p_1^*(\LL^*_0\otimes H') \otimes
p_{13*}a^*(p_{1}^*F'\otimes  E)\cong p_1^*(H\otimes H'\otimes \LL_0^*)
\otimes a^*p_{2*}(p_{1}^*F'\otimes  E).
\end{multline*}
Since $p_{2*}(p_{1}^*F'\otimes  E)=\Psi_E(F')\in\AA$, the object $\Xi(p_1^*H'\otimes a^*F')$ belongs to $a^*\AA$.
The subcategory $a^*\AA$ is triangulated and closed under direct summands, hence 
all objects $p_1^*H\otimes
p_Y^*p_{2*}(p_1^*F\otimes E)$ for $H\in \Dp(G), F\in \Dp(X)$ lie in $a^*\AA$. 
Thus, $p_Y^*\AA\subset a^*\AA$. 
The opposite inclusion is checked in the same way.
\end{proof}

\section{Semiorthogonal decompositions for  varieties with an invariant exceptional collection}
\label{section_sodforexcoll}

In this section we describe components of the semiorthogonal decomposition from Theorem~\ref{th_sodforequiv} in the case when the invariant semiorthogonal decomposition of the derived category of coherent sheaves on $X$ is
generated by an exceptional collection.

Let $X$ be a quasi-projective scheme over a field $\k$.
By a group $G$ in this section we understand a reduced affine group scheme over $\k$.

Suppose that $E$ is an exceptional object in the category $\D^{\perf}(X)$. It generates the subcategory $\langle E\rangle\subset\D^{\perf}(X)$.
We will show that invariance of this subcategory under an action of $G$ on $X$ (see Definition~\ref{def_invariantsubcat}) is related with invariance of the object $E$ in the sense, introduced below.

For an action of a finite group $G$ on $X$ it is natural to say that a sheaf~$F$ on $X$ is preserved by the action if $g^*F\cong F$
for all $g\in G$. This definition does not work well for algebraic groups because the group may have too little rational points. To deal with invariant objects we introduce the following
\begin{definition}
\label{def_preserves}
An action of an algebraic group $G$ on a scheme $X$ \emph{preserves } an object of the derived category $F\in\D(X)$ if there is a line bundle
$\LL$ on $G$ such that the objects $p_1^*\LL\otimes p_2^*F$ and $a^*F$ on $G\times X$ are quasi-isomorphic.
\end{definition}

\begin{predl}
\label{prop_invariantsheaf2} For an exceptional object $E\in\D^{\perf}(X)$ on a projective scheme $X$ over a field $\k$, acted by a reduced affine group scheme~$G$ of finite type over $\k$, the below conditions are equivalent:
\begin{enumerate}
\item for some line bundle $\LL$ on $G$ there exists an isomorphism $p_1^*\LL\otimes p_2^*E\cong a^*E$ in the category $\D^{\perf}(G\times X)$;
\item for any closed point $g$ of the scheme $G$ with the residue field $\k(g)$ one has
$g^*E'\cong E'$, where the object $E'$ is obtained from $E$ by the extension of scalars $\k\ra \k(g)$;
\item the subcategories $p_2^*\langle E\rangle$ and $a^*\langle E\rangle$
in $\D^{\perf}(G\times X)$ coincide.
\end{enumerate}
\end{predl}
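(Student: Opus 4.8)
## Proof plan

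The plan is to prove the three equivalences in the cycle $(1)\Rightarrow(2)\Rightarrow(1)$ and $(1)\Leftrightarrow(3)$, treating $(1)\Leftrightarrow(3)$ as the conceptual heart (it is essentially a special case of Proposition~\ref{prop_Gpreserves} / Definition~\ref{def_invariantsubcat}, applied to the one-term semiorthogonal decomposition piece $\langle E\rangle$), and $(1)\Leftrightarrow(2)$ as the point-wise reformulation, which is where the hypotheses ``$X$ projective'' and ``$G$ reduced'' are actually needed.

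First I would do $(1)\Rightarrow(3)$. Given an isomorphism $p_1^*\LL\otimes p_2^*E\cong a^*E$, the functor $p_1^*\LL\otimes p_2^*(-)$ and the functor $a^*(-)$ agree on the generator $E$ of $\langle E\rangle$, hence (since both functors are exact, commute with shifts and cones, and preserve direct summands, and since $p_2^*\langle E\rangle$ is generated by $p_2^*E$ by Lemma~\ref{lemma_bcdecomp}) the triangulated subcategories $a^*\langle E\rangle$ and $p_2^*(\LL\otimes\langle E\rangle)=p_2^*\langle E\rangle$ coincide — here I use that tensoring by the line bundle $p_1^*\LL$ is an autoequivalence of $\Dp(G\times X)$ fixing the subcategory generated by $p_2^*E$, because $p_1^*\LL$ is pulled back from $G$ and $\langle E\rangle$ absorbs the twist. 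For $(3)\Rightarrow(1)$, invariance $p_2^*\langle E\rangle=a^*\langle E\rangle$ forces $a^*E\in p_2^*\langle E\rangle$; I then need to identify which object of $p_2^*\langle E\rangle$ it is. Since $E$ is exceptional, $p_2^*E$ is exceptional in $\Dp(G\times X)$ with $\Hom(p_2^*E,p_2^*E[m])=\H^m(G,\O)$-modules — more precisely, using the base-change/projection computation $\Hom_{G\times X}(p_2^*E, p_2^*E\otimes p_1^*M)\cong \Hom_G(\O_G, M)$ for $M\in\Dp(G)$ — so $\langle E\rangle\subset\Dp(X)$ pulls back to the subcategory generated by the exceptional object $p_2^*E$, and any object of it isomorphic to $a^*E$ (which is likewise ``locally like $E$'' over $G$, being a pullback along an isomorphism-on-fibers map) must be of the form $p_2^*E\otimes p_1^*\LL'$ for some invertible $\LL'$ on $G$; running through the identification gives the line bundle $\LL$.

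Next, $(1)\Leftrightarrow(2)$. The implication $(1)\Rightarrow(2)$ is immediate: restrict the isomorphism $p_1^*\LL\otimes p_2^*E\cong a^*E$ to the fiber $\{g\}\times X\cong X_{\k(g)}$ over a closed point $g\in G$; the composite $\{g\}\times X\hookrightarrow G\times X\xrightarrow{a} X$ is the automorphism ``act by $g$'' of $X_{\k(g)}$ up to base change, $p_2$ restricts to the identity (after base change to $\k(g)$), and $p_1^*\LL$ restricts to the trivial line bundle $\LL|_g\otimes_{\k(g)}\O_{X_{\k(g)}}$, so we get $E'\cong g^*E'$. The reverse $(2)\Rightarrow(1)$ is the main obstacle. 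The object $\HHom(a^*E, p_2^*E)$ on $G\times X$ — or rather the relative derived Hom complex $R\pi_*R\HHom(a^*E,p_2^*E)$ pushed to $G$, where $\pi\colon G\times X\to G$ — is, fiberwise over each closed point $g$, the complex $R\Gamma(X_{\k(g)}, R\HHom(g^*E',E'))$, which by exceptionality of $E$ and condition $(2)$ is concentrated in degree $0$ and one-dimensional over $\k(g)$. Since $G$ is reduced (and $X$ projective, so the pushforward is coherent and cohomology-and-base-change applies), a complex on $G$ whose fiberwise cohomology is a $1$-dimensional vector space in a single degree at every closed point is quasi-isomorphic to a line bundle $\LL^{-1}$ placed in that degree. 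Evaluating the universal element of $\Hom_{G\times X}(a^*E,\; p_1^*\LL\otimes p_2^*E)$ corresponding to $1\in\Hom(\LL^{-1},\LL^{-1})$ gives a morphism $a^*E\to p_1^*\LL\otimes p_2^*E$ which is fiberwise an isomorphism over every closed point of $G$, hence an isomorphism (both sides perfect, $G\times X$ reasonable); this is $(1)$.

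The delicate points to be careful about in a full write-up: that cohomology-and-base-change is legitimate here (projectivity of $X$ gives properness, so $R\pi_*$ of a perfect complex is perfect on $G$; reducedness of $G$ is what turns ``fiberwise rank $1$ in one degree'' into ``honestly a line bundle in one degree,'' via semicontinuity plus the reduced structure forcing the complex to have no higher Tor); and the bookkeeping of base change to residue fields $\k(g)$ when $\k$ is not algebraically closed, which is why the statement is phrased in terms of $E'$ over $\k(g)$ rather than naive pullback by rational points. I expect $(2)\Rightarrow(1)$, and within it the cohomology-and-base-change step over the possibly-nonreduced-looking but actually-reduced base $G$, to be the part requiring the most care; everything else is formal manipulation with the results already established (Lemma~\ref{lemma_bcdecomp}, Proposition~\ref{prop_Gpreserves}, and standard projection/base-change formulae).
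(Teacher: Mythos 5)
Your proposal is correct in substance, but it is organized differently from the paper and is considerably more self-contained. The paper proves the single cycle $2\Rightarrow 1\Rightarrow 3\Rightarrow 2$: the implication $2\Rightarrow 1$ is not proved at all but cited from \cite[prop.~2.17]{El_C1}; $1\Rightarrow 3$ is the same one-line generator argument you give; and $3\Rightarrow 2$ is done exactly as your $(1)\Rightarrow(2)$, by restricting to the fiber $\{g\}\times X\cong X_{\k(g)}$ and observing that the two restricted subcategories are generated by the exceptional objects $E'$ and $g^*E'$. What you do differently is (a) actually supply the cohomology-and-base-change proof of $2\Rightarrow 1$ (pushing $R\HHom(a^*E,p_2^*E)$ down to $G$, using properness of $X$ for perfection of the pushforward and reducedness of $G$ to convert fiberwise rank one in a single degree into an honest line bundle, then checking the universal morphism is a fiberwise isomorphism) --- this is almost certainly the content of the cited proposition, and it is the only place where projectivity and reducedness are used, so spelling it out is a genuine improvement in self-containedness; and (b) attempt a direct $3\Rightarrow 1$. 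The latter is the one soft spot: the claim that any object of $p_2^*\langle E\rangle\cong\Dp(G)$ isomorphic to $a^*E$ must correspond to a line bundle on $G$ is exactly the $2\Rightarrow 1$ machinery in disguise (one still has to restrict to fibers and invoke reducedness to rule out $M\in\Dp(G)$ of higher rank or with several cohomology sheaves), so your direct $3\Rightarrow 1$ is neither easier nor independent; the paper's detour $3\Rightarrow 2\Rightarrow 1$ is the cleaner packaging, and since your fiber-restriction argument for $(1)\Rightarrow(2)$ adapts verbatim to give $3\Rightarrow 2$, nothing is logically missing from your write-up even if the direct $3\Rightarrow 1$ is dropped.
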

\begin{proof}
2 $\Rightarrow$ 1 is proved in~\cite[prop. 2.17]{El_C1}.

1 $\Rightarrow$ 3. By the definition, the subcategory $p_2^*\langle E\rangle$ in $\D^{\perf}(G\times X)$ is generated by the objects of the form $p_1^*F\otimes p_2^*F'$,
where $F\in \D^{\perf}(G)$, $F'\in\langle E\rangle$. Particularly, $p_2^*\langle E\rangle$ contains the object $a^*E\cong p_1^*\LL\otimes p_2^*E$. Therefore,
$p_2^*\langle E\rangle\supset a^*\langle E\rangle $. The inverse inclusion is proved in the same way.

3 $\Rightarrow$ 2. Let $g$ be a closed point of the scheme $G$.
Restrict two coinciding subcategories $p_2^*\langle E\rangle$ and $a^*\langle E\rangle $
in $\D^{\perf}(G\times X)$ on the fiber $X'=g\times X$. We get two subcategories in  $\D^{\perf}(X')$ generated by the exceptional objects $E'$ and  $g^*E'$ respectively. We conclude that $E'$ and
$g^*E'$ are isomorphic.
\end{proof}

The following fact is crucial for us: any exceptional object $E$ on $X$ preserved by the group action possesses an equivariant structure for some cocycle on $G$.
\begin{predl}
\label{prop_invar>equiv}
Let $E$ be an exceptional object in $\D(X)$ preserved by the action of  a group $G$ on $X$.
Then for some cocycle $(\LL,\a)$ on~$G$ there is an
$(\LL,\a)$-equivariant structure on $E$. That is, there is an isomorphism
$\theta\colon p_1^*\LL\otimes p_2^*E\ra a^*E$ compatible with $\a$
in the sense of Definition~\ref{def_Lasheaf}.
\end{predl}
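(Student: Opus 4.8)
The plan is to start from the isomorphism $\theta_0\colon p_1^*\LL\otimes p_2^*E\ra a^*E$ on $G\times X$ provided by the hypothesis that the action preserves $E$ (Definition~\ref{def_preserves}), and then to \emph{correct} $\theta_0$ by an automorphism so that it satisfies the cocycle-compatibility condition of Definition~\ref{def_Lasheaf}. The crucial input is that $E$ is exceptional, so $\Hom(E,E)=\k$ and, more importantly, after any flat base change $\Hom(p^*E,p^*E)=\O$ for the relevant product schemes; this forces every relevant automorphism group to be as small as possible.

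First I would compare the two isomorphisms on $G\times G\times X$ obtained from $\theta_0$ by the two sides of the diagram in Definition~\ref{def_Lasheaf}. Both are isomorphisms from $p_1^*\LL\otimes p_2^*\LL\otimes p_3^*E$ to $(a(\mu\times 1))^*E$ (using a fixed isomorphism $p_1^*\LL\otimes p_2^*\LL\cong \mu^*\LL$ coming from some chosen $\a_0$ on $\LL$ — note $\a_0$ itself need not yet be a cocycle, and we are free to modify $\LL$ by pulling back a line bundle from the base). Their ``difference'' is an automorphism of $p_3^*E$ on $G\times G\times X$, hence — since $E$ is exceptional and $G\times G$ is (say) connected, or after passing to connected components — a global invertible function $c\in \O^*(G\times G\times X)=\O^*(G\times G)$ (using that $X$ is projective so $\O^*(X)=\k^*$, and absorbing the $\k^*$-ambiguity carefully). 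The associativity condition on $\theta$ then becomes a cocycle equation for $c$, i.e.\ $c$ is a $2$-cocycle on $G$ with values in $\Gm$. The next step is to use this $c$ to \emph{define} the cocycle structure: one sets $\a=\a_0\cdot c$ on $\LL$ (interpreting $c\in \O^*(G\times G)$ as an automorphism of $\mu^*\LL$), and checks that the associativity pentagon for $(\LL,\a)$ is exactly the statement that the coboundary of $c$ is trivial — which one verifies by chasing the $G\times G\times G$ diagram, again using that the relevant object there has no nontrivial automorphisms beyond scalars.

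With $(\LL,\a)$ so defined, $\theta_0$ itself should now satisfy the compatibility diagram of Definition~\ref{def_Lasheaf} by construction, giving the desired $(\LL,\a)$-equivariant structure on $E$. I would also need a small normalization step at the identity section $e\colon \Spec\k\ra G$: restricting $\theta_0$ along $e\times 1\colon X\ra G\times X$ gives an automorphism of $E$, i.e.\ a scalar, and one rescales $\theta_0$ (and correspondingly twists $\LL$ by a pullback from the base, or simply rescales) so that this scalar is $1$; this is harmless and ensures the unit axiom.

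The main obstacle I expect is the bookkeeping around \emph{which} automorphism groups are actually scalars: one must be careful that $\O^*(G\times X)=\O^*(G)$ (true since $X$ is projective and connected — one may reduce to that case), that $\End(p_2^*E\otimes p_1^*\LL)$ on $G\times X$ is $\O(G)$ rather than something larger (this uses $\Hom(E,E)=\k$ together with flat base change along $G\times X\ra X$ and that $\Hom^{<0}$ vanishes, which holds for an exceptional object in $\D^{\perf}$), and that the passage between ``automorphism = invertible function'' and ``$2$-cocycle'' matches the sign/ordering conventions in Definition~\ref{def_la}. A secondary subtlety is the reducedness hypothesis on $G$ and possible disconnectedness: one handles non-connected $G$ by working component-wise, and reducedness guarantees that an invertible regular function on $G\times G$ really is determined by its values, so that the cocycle identity for $c$ can be checked pointwise if convenient. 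Once these identifications are in place, every diagram reduces to an identity of scalars/functions that follows formally, so no hard computation remains.
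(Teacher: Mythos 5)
Your proposal is correct and follows essentially the same strategy as the paper: use exceptionality of $E$ to identify morphisms on $G\times G\times X$ between line-bundle twists of $p_3^*E$ with morphisms of line bundles on $G\times G$ (via the projection formula and flat base change), extract from the compatibility diagram an isomorphism $\a\colon p_1^*\LL\otimes p_2^*\LL\to\mu^*\LL$, and verify associativity on $G\times G\times G\times X$ by the same rigidity. The one structural difference is that you first \emph{posit} an auxiliary isomorphism $\a_0\colon p_1^*\LL\otimes p_2^*\LL\to\mu^*\LL$ and then correct it by a $2$-cocycle $c\in\O^*(G\times G)$, whereas the paper defines $\a$ in one step: the two legs of the diagram already give an isomorphism $p_1^*\LL\otimes p_2^*\LL\otimes p_3^*E\to(\mu p_{12})^*\LL\otimes p_3^*E$, and the computation
$\Hom(p_1^*\LL\otimes p_2^*\LL\otimes p_3^*E,(\mu p_{12})^*\LL\otimes p_3^*E)\cong\Hom(p_1^*\LL\otimes p_2^*\LL,\mu^*\LL)$
shows it is induced by an isomorphism of line bundles. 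Note that the existence of your $\a_0$ is not a priori clear (a line bundle on $G$ need not satisfy $p_1^*\LL\otimes p_2^*\LL\cong\mu^*\LL$); it is itself a consequence of the hypothesis, established by exactly this Hom computation --- so once you have done that computation the detour through $c$ is unnecessary. A second minor point: the paper's identification goes through $\Hom(E,E)=\k$, the projection formula and affineness of $G\times G$ rather than through $\O^*(G\times G\times X)=\O^*(G\times G)$, so it does not need projectivity or connectedness of $X$; and since Definitions~\ref{def_la} and~\ref{def_Lasheaf} impose only associativity, your normalization at the identity section, while harmless, is not required.
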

\begin{proof}
Let us fix an isomorphism
$\theta\colon p_1^*\LL\otimes  p_2^*E\ra a^*E$. On the product
$G\times G\times X$  we have a commutative diagram
$$\xymatrix{
p_1^*\LL\otimes p_2^*\LL\otimes p_3^*E \ar[rr]^{1\otimes p_{23}^*\theta}
\ar[d]_{\a'}&&
p_1^*\LL\otimes (ap_{23})^*E \ar[rr]^{(1\times a)^*\theta} &&
(a(1\times a))^*E  \ar@{=}[d]\\
(\mu p_{12})^*\LL\otimes p_3^*E \ar[rrrr]^{(\mu \times 1)^*\theta} &&&&
(a(\mu \times 1))^*E,
}
$$
where
$$\a'\colon
p_1^*\LL\otimes p_2^*\LL\otimes p_3^*E\ra
(\mu p_{12})^*\LL\otimes p_3^*E
$$
is a certain isomorphism.
Let us check that $\a'$
have the form $p_{12}^*\a\otimes 1$ for some isomorphism $\a\colon  p_1^*\LL\otimes p_2^*\LL\ra\mu^*\LL$ on $G\times G$.
Indeed, since $E$ is exceptional,
\begin{multline*}
\Hom(p_1^*\LL\otimes p_2^*\LL\otimes p_3^*E,
(\mu p_{12})^*\LL\otimes p_3^*E)=\\
=\Hom(p_3^*E,
(p_1^*\LL\otimes p_2^*\LL)^{-1}\otimes(\mu p_{12})^*\LL\otimes p_3^*E)=\\
=\Hom(E,
p_{3*}((p_1^*\LL\otimes p_2^*\LL)^{-1}\otimes(\mu p_{12})^*\LL\otimes p_3^*E))=\\
=\Hom(E,
p_{3*}((p_1^*\LL\otimes p_2^*\LL)^{-1}\otimes(\mu p_{12})^*\LL)\otimes E)=\\
=\Hom(E,H^0(G\times G,(p_1^*\LL\otimes p_2^*\LL)^{-1}\otimes
\mu^*\LL)\otimes E)=
\Hom(p_1^*\LL\otimes p_2^*\LL,\mu^*\LL).
\end{multline*}
The last but one equality here is due to the flat base change theorem.
Considering sheaves and their morphisms on $G\times G\times G\times X$,
we get the associativity condition: on $G\times G\times G$
the isomorphisms $$(1 \times \mu)^*\a\circ (1\otimes p_{23}^*\a) \quad\text{and}\quad
(\mu \times 1)^*\a\circ (p_{12}^*\a\otimes 1)
$$
between the sheaves
$p_1^*\LL\otimes p_2^*\LL\otimes p_3^*\LL$ and $(\mu(1\times \mu))^*\LL$
are equal. Hence, the pair $(\LL,\a)$ is a cocycle on the group $G$
in the sense of Definition~\ref{def_la} and $\EE=(E,\theta)$ is an equivariant object.
\end{proof}

Suppose the category of perfect complexes $\D^{\perf}(X)$ possesses a full exceptional collection $(E_1,\ldots,E_n)$. This collection induces a semiorthogonal decomposition $$\D^{\perf}(X)=\left\langle\langle E_1\rangle,\ldots,\langle E_n\rangle\right\rangle$$
into the categories, equivalent to  $\D^b(\k\mmod)$. Suppose that the objects~$E_i$ are invariant under the action of $G$.
Proposition~\ref{prop_invariantsheaf2} implies that the components of the above decomposition are preserved by the action.
By Proposition~\ref{prop_invar>equiv}, the object $E_i$ can be endowed with an equivariant structure for a certain cocycle $(\LL_i,\a_i)$ on~$G$. Denote the corresponding twisted equivariant object of the category $\D^{\perf}(X)^{G,\LL_i,\a_i}$ by $\EE_i$. Suppose that the group $G$ is linearly reductive. Then by theorem~\ref{th_descentforequivtwist}, $\EE_i$ corresponds to an object of the derived  category $\D^{\perf,G,\LL_i,\a_i}(X)$, which will also by denoted by~$\EE_i$.

\begin{theorem}
\label{th_sodforexcoll}
The category $\D^{\perf,G}(X)$ admits a semiorthogonal decomposition $$\D^{\perf,G}(X)=\langle\EE_1\otimes\D^b(\repr(G,\LL_1^{-1},\a_1^{-1})),\ldots,
\EE_n\otimes\D^b(\repr(G,\LL_n^{-1},\a_n^{-1}))\rangle.$$
\end{theorem}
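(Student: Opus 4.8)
The plan is to deduce Theorem~\ref{th_sodforexcoll} by combining three ingredients already in place: the semiorthogonal decomposition of $\D^{\perf,G}(X)$ coming from Theorem~\ref{th_sodforequiv}, the identification of each component as a comodule category via descent theory, and an explicit computation of the comonad on the one-object category $\langle E_i\rangle\simeq\D^b(\k\mmod)$. First I would verify the hypotheses of Theorem~\ref{th_sodforequiv} for the decomposition $\D^{\perf}(X)=\langle\langle E_1\rangle,\ldots,\langle E_n\rangle\rangle$: since each $E_i$ is preserved by the action, Proposition~\ref{prop_invariantsheaf2} gives $p_2^*\langle E_i\rangle = a^*\langle E_i\rangle$ in $\D^{\perf}(G\times X)$, which is exactly the condition $p_2^*\AA_i^{\perf}=a^*\AA_i^{\perf}$ needed. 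Hence Theorem~\ref{th_sodforequiv} yields $\D^{\perf,G}(X)=\langle\BB_1^{\perf},\ldots,\BB_n^{\perf}\rangle$ with $\BB_i^{\perf}=\{F\in\D^{\perf,G}(X)\mid \overline F\in\langle E_i\rangle\}$. It remains to identify $\BB_i^{\perf}$ with $\EE_i\otimes\D^b(\repr(G,\LL_i^{-1},\a_i^{-1}))$.

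For the identification I would use the descent picture. By Theorem~\ref{th_descentfor2}, $\BB_i^{\perf}$ is equivalent to the comodule category $\langle E_i\rangle_{\TT_G}$, where $\TT_G$ is the restriction to $\langle E_i\rangle$ of the comonad $p^*p_*=p_{2*}a^*$ on $\D^{\perf}(X)$ (upper triangularity, established in the proof of Theorem~\ref{th_sodforequiv}, guarantees this restriction makes sense). Next, the functor $-\otimes\EE_i\colon\D^b(\k\mmod)\to\D^{\perf}(X)$, i.e.\ $V\mapsto V\otimes_{\k}E_i$, identifies $\D^b(\k\mmod)$ with $\langle E_i\rangle$ as triangulated categories. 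This functor is the one associated with the kernel $E_i\in\D^{\perf}(\mathrm{pt}\times X)$, equipped with its $(\LL_i,\a_i)$-equivariant structure $\EE_i$; by Proposition~\ref{prop_kernel} it is compatible with the twisted actions of $G$ on the point (twisted by the cocycle $(\LL_i,\a_i)$, by construction of $\EE_i$) and on $X$ (twisted by the trivial cocycle). Therefore $-\otimes\EE_i$ induces an equivalence between the descent category of $G$ acting on the point twisted by $(\LL_i^{-1},\a_i^{-1})$ — which by Theorem~\ref{th_descentforequivtwist} (and its point case) is $\D^b(\repr(G,\LL_i^{-1},\a_i^{-1}))$ — and the subcategory $\langle E_i\rangle_{\TT_G}\subset\D^{\perf}(X)_{\TT_G}$, i.e.\ $\BB_i^{\perf}$. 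The sign of the cocycle on the point side is the inverse of the one on $\EE_i$ because the descent comonad for a kernel functor twists the source by the \emph{inverse} of the bundle carried by the kernel (this is visible in the formula $P'_{f*}=p_{f*}(p^*\LL^{-1}\otimes-)$ from Proposition~\ref{prop_twistedbc} and in the $\LL_0^*$ appearing throughout the proof of Proposition~\ref{prop_kernel}).

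Concretely I would organize the argument as: (1) check the hypothesis and invoke Theorem~\ref{th_sodforequiv}; (2) rewrite each $\BB_i^{\perf}\simeq\langle E_i\rangle_{\TT_G}$ via Theorem~\ref{th_descentfor2}; (3) apply Proposition~\ref{prop_kernel}(3) with $X$ replaced by a point, $Y$ by $X$, the kernel $\EE_i$, the cocycle $(\LL_i,\a_i)$ on the source and the trivial cocycle on $X$ — its conclusion is precisely that $-\otimes\EE_i$ gives an equivalence of the source descent category onto $\AA_{\TT}$ where $\AA=\langle E_i\rangle$; (4) identify the source descent category with $\D^b(\repr(G,\LL_i^{-1},\a_i^{-1}))$ using Theorem~\ref{th_descentforequivtwist} and $\qcoh^{G,\LL,\a}(\mathrm{pt})=\repr(G,\LL,\a)$; (5) assemble the pieces and relabel. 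Linear reductivity of $G$ is used in steps (2) and (4).

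The main obstacle is step (3)–(4): pinning down exactly which cocycle (and in particular the sign of its exponent) governs the descent category on the point side, and checking that the comonad induced by the kernel functor $-\otimes\EE_i$ on $\langle E_i\rangle$ really is the restriction of $\TT_G$. This amounts to tracing the equivariant/twisted structures through Proposition~\ref{prop_kernel} and unwinding that the $(\LL_i,\a_i)$-equivariant structure $\theta_i$ on $E_i$ from Proposition~\ref{prop_invar>equiv} produces, under $p_{2*}a^*$, the $(\LL_i^{-1},\a_i^{-1})$-twisted comonad on $\D^b(\k\mmod)$; once this bookkeeping is done, the rest is formal. I would also remark that when the cocycles $(\LL_i,\a_i)$ are trivial (e.g.\ $G$ connected with $\Pic(G)=0$ and $H^2(G,\Gm)=0$) the decomposition reduces to $\langle\EE_i\otimes\D^b(\repr G)\rangle$, recovering the main result of~\cite{El_C1} without the hypothesis that the exceptional collection consist of sheaves.
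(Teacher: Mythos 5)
Your proposal is correct and follows essentially the same route as the paper: invoke Theorem~\ref{th_sodforequiv} for the decomposition into the $\langle E_i\rangle_{\TT_G}$, then identify each component via Proposition~\ref{prop_kernel} applied to the kernel $E_i\in\D^{\perf}(\Spec\k\times X)$ with its $(\LL_i,\a_i)$-equivariant structure, and finally use Theorem~\ref{th_descentforequivtwist} to recognize the source descent category as $\D^b(\repr(G,\LL_i^{-1},\a_i^{-1}))$. The only wrinkle is a transient inconsistency in which cocycle twists the point (you first say $(\LL_i,\a_i)$, then correctly land on $(\LL_i^{-1},\a_i^{-1})$ so that the target twist $\LL\otimes\LL_0$ is trivial), but you flag and resolve this yourself, so it is not a gap.
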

\begin{remark}
This fact was proved in~\cite[th. 2.11]{El_C1}
under different assumptions: the objects of the exceptional collection were supposed to be sheaves (this allowed to avoid using descent theory for derived categories) and the group was not assumed to be linearly reductive.
\end{remark}
\begin{proof}
Let $\TT_G$ be the comonad on the category $\D(X)$ associated with the morphism of stacks $X\ra X\quot G$.
By Theorem~\ref{th_descentfor2}, the category $\D^{\perf,G}(X)$ is equivalent to the descent category $\D^{\perf}(X)_{\TT_{G}}$.
Theorems~\ref{th_sodforequiv} imply that $\D^{\perf}(X)_{\TT_{G}}$ has the semiorthogonal decomposition $\langle\langle E_1\rangle_{\TT_{G}},\ldots,\langle E_n\rangle_{\TT_{G}}\rangle$.
Here $\langle E_i\rangle_{\TT_{G}}$ is the descent category, related to the subcategory $\langle E_i\rangle\subset \D^{\perf}(X)$, see Definition~\ref{def_dxt'}.
Let us describe the categories $\langle E_i\rangle_{\TT_{G}}$ explicitly using Lemma~\ref{lemma_fffunctor}.

Consider the functor $\Psi_i= - \otimes E_i\colon \D^b(\k\mmod)=\D^{\perf}(\Spec\k)\ra \D^{\perf}(X)$. Its image is the subcategory $\langle E_i\rangle\subset\D^{\perf}(X)$. This functor extends to a fully faithful functor $\D(\k\Mod)\cong\D(\Spec\k)\ra \D(X)$, which also will be denoted by $\Psi_i$.
Clearly, $\Psi_i$ is a kernel functor, it can be defined by the kernel $E_i\in\D^{\perf}(\Spec \k\times X)$. This kernel admits a structure of an equivariant object twisted into the cocycle $(\LL_i,\a_i)$. By Proposition~\ref{prop_kernel}, the functor $\Psi_i$
induces a fully faithful functor $$\D^b(\repr(G,\LL_i^{-1},\a_i^{-1}))\cong\D^{\perf}(\Spec\k)_{\TT_{G,\LL_i^{-1},\a_i^{-1}}}
\longrightarrow \D^{\perf}(X)_{\TT_G}\cong\D^{\perf,G}(X),$$
its image is the subcategory $\langle E_i\rangle_{\TT_G}$.
\end{proof}

\section{Semiorthogonal decompositions for projective bundles and for blow-ups}\label{section_sodforprojbund}

In this section we apply Theorem~\ref{th_sodforequiv} in two special cases of group actions on a scheme and describe explicitly the components of the resulting semiorthogonal decompositions.

The first example is the equivariant derived category of a projective bundle, studied in~\cite{El_C3} in the case of finite groups. The semiorthogonal decomposition of the derived category of projective bundles, from which we start, was constructed by D.\,Orlov in~\cite{Or_projbundles}. Here we consider the simple case when the group action on the projective bundle is induces by an equivariant structure on the base.

Let $S$ be a quasi-projective scheme over a field $\k$, let $E$ be a vector bundle of rank $r$ on $S$, let $X=\P_S(E)$ be its projectivization. Denote by $\pi\colon X\ra S$ the natural projection.
Then the semiorthogonal decomposition by Orlov has the following form:
\begin{equation}
\label{equation_decompproj}
\D^{\perf}(X)
=\langle \pi^*\D^{\perf}(S),
\O_{X/S}(1)\otimes \pi^*\D^{\perf}(S),\ldots, \O_{X/S}(r-1) \otimes
\pi^*\D^{\perf}(S)\rangle.
\end{equation}
Suppose that a group $G$ (more precisely, an affine group scheme of finite type over $\k$) acts on $S$ and that $E$ admits a $G$-equivariant structure twisted into the cocycle $(\LL,\a)$. We denote the corresponding $(\LL,\a)$-$G$-equivariant bundle by $\EE$. Then the action of the group $G$ on~$X$ is well-defined, and the projection $\pi$ is an equivariant map.
Suppose that the group~$G$ is linearly reductive.
\begin{theorem}
\label{th_sodforprojbund}
In the above assumptions the following semiorthogonal decomposition exists
\begin{multline*}
\D^{\perf,G}(X)=\langle \pi^*\D^{\perf,G}(S), \O_{X/S}(1)\otimes \pi^*\D^{\perf,G,\LL,\a}(S),\ldots\\
\ldots,\O_{X/S}(r-1)\otimes \pi^*\D^{\perf,G,\LL^{r-1},\a^{r-1}}(S)\rangle.
\end{multline*}
\end{theorem}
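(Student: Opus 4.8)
The strategy is to show that Orlov's decomposition~\eqref{equation_decompproj} is preserved by the action of $G$, apply Theorem~\ref{th_sodforequiv} to it, and then identify the resulting components by means of Proposition~\ref{prop_kernel}. For $0\le k\le r-1$ let $\AA^{\perf}_k=\O_{X/S}(k)\otimes\pi^*\D^{\perf}(S)$ be the corresponding component of~\eqref{equation_decompproj} and $\Psi_k=\O_{X/S}(k)\otimes\pi^*(-)\colon\D^{\perf}(S)\ra\D^{\perf}(X)$ the corresponding functor. Since $R\pi_*\O_X=\O_S$ and tensoring by a line bundle is an autoequivalence, $\Psi_k$ is fully faithful with image $\AA^{\perf}_k$; moreover it is a kernel functor, with kernel $E_k:=\Gamma_{\pi*}\O_{X/S}(k)\in\D^{\perf}(S\times X)$, where $\Gamma_\pi\colon X\ra S\times X$ is the graph of $\pi$ (indeed $p_{2*}(p_1^*(-)\otimes E_k)\cong\O_{X/S}(k)\otimes\pi^*(-)$ by the projection formula, using $p_1\Gamma_\pi=\pi$ and $p_2\Gamma_\pi=\mathrm{id}_X$). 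The morphism $\pi$ is $G$-equivariant, hence so is the closed immersion $\Gamma_\pi$; and $\pi^*E$ is $(\LL,\a)$-$G$-equivariant by Lemma~\ref{lemma_equivmorph}, so its tautological line subbundle $\O_{X/S}(-1)\subset\pi^*E$ inherits an $(\LL,\a)$-equivariant structure, $\O_{X/S}(k)$ is $(\LL^{-k},\a^{-k})$-$G$-equivariant, and therefore $E_k$ is an object of $\D^{\perf,G,\LL^{-k},\a^{-k}}(S\times X)$.

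Now apply Proposition~\ref{prop_kernel}.3 to the functor $\Psi_k$, taking the source category to be $\D^{\perf}(S)$ twisted into the cocycle $(\LL^k,\a^k)$ and the kernel twisted into $(\LL_0,\a_0)=(\LL^{-k},\a^{-k})$; then $(\LL^k,\a^k)\otimes(\LL^{-k},\a^{-k})$ is trivial, so the target is the untwisted category. The proposition yields at once that (a) $\AA^{\perf}_k\subset\D^{\perf}(X)$ is invariant under the action, i.e. $p_2^*\AA^{\perf}_k=a^*\AA^{\perf}_k$; and that (b) $\Psi_k$ induces a fully faithful functor $\D^{\perf,G,\LL^k,\a^k}(S)\ra\D^{\perf,G}(X)$ with image $\{F\in\D^{\perf,G}(X)\mid\overline F\in\AA^{\perf}_k\}$ — here we invoke Theorem~\ref{th_descentforequivtwist} and the linear reductivity of $G$ to identify the descent categories $\D^{\perf}(S)_{\TT_{G,\LL^k,\a^k}}$ and $\D^{\perf}(X)_{\TT_G}$ with $\D^{\perf,G,\LL^k,\a^k}(S)$ and $\D^{\perf,G}(X)$ respectively. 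By the very construction of the compatible structure on $\Psi_k$ (assembled from pullback along $\pi$ and tensoring with $\O_{X/S}(k)$ via Lemmas~\ref{lemma_equivmorph} and~\ref{lemma_tensorE}), this induced functor is $F\mapsto\O_{X/S}(k)\otimes\pi^*F$, so its image equals $\O_{X/S}(k)\otimes\pi^*\D^{\perf,G,\LL^k,\a^k}(S)$.

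Finally, by Orlov $\D^{\perf}(X)=\langle\AA^{\perf}_0,\ldots,\AA^{\perf}_{r-1}\rangle$, and by~(a) every component is preserved by the action, so Theorem~\ref{th_sodforequiv} produces the semiorthogonal decomposition $\D^{\perf,G}(X)=\langle\BB^{\perf}_0,\ldots,\BB^{\perf}_{r-1}\rangle$ with $\BB^{\perf}_k=\{F\in\D^{\perf,G}(X)\mid\overline F\in\AA^{\perf}_k\}$; combined with~(b) this is precisely $\O_{X/S}(k)\otimes\pi^*\D^{\perf,G,\LL^k,\a^k}(S)$, which is the assertion. The homological content is fully carried by Theorems~\ref{th_sodforequiv}, \ref{th_descentforequivtwist} and Proposition~\ref{prop_kernel}; the only genuinely delicate point is the bookkeeping of the twisting cocycles — above all the verification that the tautological subbundle $\O_{X/S}(-1)$ of $\pi^*E$ really carries the $(\LL,\a)$-equivariant structure, since an error of sign there would change the twist in every component.
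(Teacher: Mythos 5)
Your proof is correct and follows essentially the same route as the paper: realize each Orlov component as the image of the kernel functor $\Psi_k=\O_{X/S}(k)\otimes\pi^*(-)$ with kernel $\Gamma_{\pi*}\O_{X/S}(k)$ carrying an $(\LL^{-k},\a^{-k})$-equivariant structure, then invoke Proposition~\ref{prop_kernel} for invariance and identification of the components, Theorem~\ref{th_descentforequivtwist} to translate descent categories into twisted equivariant categories, and Theorem~\ref{th_sodforequiv} to assemble the decomposition. Your explicit tracking of the cocycle twists (including the sign on $\O_{X/S}(-1)\subset\pi^*\EE$) matches the paper's bookkeeping exactly.
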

\begin{proof}
First, note that $\pi^*\EE$ is an $(\LL,\a)$-equivariant bundle on $X$,
and  $\O_{X/S}(-1)$ is its $1$-dimensional $(\LL,\a)$-equivariant subbundle. Therefore, the bundles $\O_{X/S}(k)$ are twisted
$(\LL,\a)^{-k}$-$G$-equivariant subbundles. For any integer $k, 0\le k\le r-1$, consider the fully faithful functor
$$\Psi_k=\O_{X/S}(k)\otimes \pi^*(-)\colon \D(S)\ra \D(X).$$
This functor is kernel, the corresponding kernel is the sheaf
$\O_{X/S}(k)$,  located on the graph of the map $\pi\colon X\ra
S$. Then, $\Psi_k$ sends perfect complexes to perfect complexes, the category $\Psi_k(\D^{\perf}(S))$ is the subcategory
$\O_{X/S}(k)\otimes \pi^*\D^{\perf}(S)$
in the decomposition~(\ref{equation_decompproj}). Let $\TT_{G,\LL^k,\a^k}$ and
$\TT_G$ be the comonads on the categories $\D(S)$ and $\D(X)$, associated with the actions of $G$ on $S$ (twisted into the cocycle $(\LL^k,\a^k)$)
and on $X$
(the standard one). By Proposition~\ref{prop_kernel}, the functor
$\Psi_k$ induces a fully faithful functor
$$\D^{\perf,G,\LL^k,\a^k}(S)\cong\D^{\perf}(S)_{\TT_{G,\LL^k,\a^k}}\ra
\D^{\perf}(X)_{\TT_G}\cong\D^{\perf,G}(X),$$
whose image is the descent category $(\O_{X/S}(k)\otimes \pi^*\D^{\perf}(S))_{\TT_G}$.
To complete the proof, note that the subcategories $\O_{X/S}(k)\otimes \pi^*\D^{\perf}(S)$
of the decomposition~(\ref{equation_decompproj}) are invariant under the group action, and apply Theorem~\ref{th_sodforequiv} to the decomposition~(\ref{equation_decompproj}).
\end{proof}

The second example is a blow-up of a smooth invariant subvariety in a smooth variety.

Let $Z\subset X$ be a smooth subscheme of codimension $r$ of a smooth quasi-projective scheme over a field $\k$, let $\s\colon \overline X\ra X$ be the blow-up of $X$ along $Z$. Denote by $\overline Z$ the preimage of $Z$ under $\s$, then $\overline Z$ is the projectivization of the normal bundle $\NN_{Z/X}$ on~$Z$. Denote by $\s_Z$ the restriction of $\s$ to $\overline Z$. As in the previous example, the starting semiorthogonal decomposition for the blow-up was constructed by Orlov in~\cite{Or_projbundles}:
\begin{equation}
\label{equation_decompblowup}
\D^{\perf}(\overline X)=\langle \O_{\overline Z/Z}(-r+1)\otimes \s_Z^*\D^{\perf}(Z), \ldots, \O_{\overline Z/Z}(-1)\otimes \s_Z^*\D^{\perf}(Z), \s^*\D^{\perf}(X) \rangle.
\end{equation}

Let an affine linearly reductive group scheme $G$ of finite type over $\k$  act on the scheme $X$ such that the subscheme $Z$ is invariant.
\begin{theorem}
\label{th_blowups}
In the above assumptions there exists the following semiorthogonal decomposition
$$
\D^{\perf,G}(\overline X)=\langle \O_{\overline Z/Z}(-r+1)\otimes \s_Z^*\D^{\perf,G}(Z),
\ldots, \O_{\overline Z/Z}(-1)\otimes \s_Z^*\D^{\perf,G}(Z), \s^*\D^{\perf,G}(X) \rangle,
$$
whose components $\O_{\overline Z/Z}(-i)\otimes \s_Z^*\D^{\perf,G}(Z)$
are equivalent to the category $\D^{\perf,G}(Z)$,  and the component $\s^*\D^{\perf,G}(X)$ -- to the category $\D^{\perf,G}(X)$.
\end{theorem}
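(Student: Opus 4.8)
The plan is to follow the proof of Theorem~\ref{th_sodforprojbund} almost verbatim. Since the center $Z$ is $G$-invariant, blowing up is $G$-functorial: the scheme $\overline X$ carries an action of $G$ for which $\s\colon\overline X\ra X$ is equivariant, the exceptional divisor $\overline Z=\s^{-1}(Z)$ is $G$-invariant and is equivariantly identified with $\P_Z(\NN_{Z/X})$ (the conormal sequence makes $\NN_{Z/X}$ a $G$-equivariant bundle on $Z$, untwisted, because $X$ is smooth and $Z$ is genuinely invariant), and $\s_Z\colon\overline Z\ra Z$ is equivariant. Consequently the tautological bundle $\O_{\overline Z/Z}(-1)$ and all its powers $\O_{\overline Z/Z}(-i)$ are $G$-equivariant with no cocycle twist --- this is why, in contrast to Theorem~\ref{th_sodforprojbund}, no cocycle appears in the statement.

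First I would check that every component of Orlov's decomposition~\eqref{equation_decompblowup} is invariant in the sense of Definition~\ref{def_invariantsubcat}. The component $\s^*\D^{\perf}(X)$ is the essential image of the fully faithful functor $\s^*\colon\D^{\perf}(X)\ra\D^{\perf}(\overline X)$, which is the kernel functor with kernel $\O_{\Gamma_\s}\in\D^{\perf}(X\times\overline X)$, the structure sheaf of the (invariant) graph of $\s$; this kernel is $G$-equivariant for the trivial cocycle, so Proposition~\ref{prop_kernel}.3 (applied with $(\LL_0,\a_0)$ and $(\LL,\a)$ trivial) yields that $\s^*\D^{\perf}(X)$ is invariant. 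For $1\le i\le r-1$, the component $\O_{\overline Z/Z}(-i)\otimes\s_Z^*\D^{\perf}(Z)$ is the essential image of the fully faithful functor $F\mapsto j_*\bigl(\O_{\overline Z/Z}(-i)\otimes\s_Z^*F\bigr)$, where $j\colon\overline Z\hookrightarrow\overline X$ is the inclusion (this lands in $\D^{\perf}(\overline X)=\D^b(\overline X)$ since $\overline X$ is smooth); it is the kernel functor with kernel $(\s_Z\times j)_*\O_{\overline Z/Z}(-i)\in\D^{\perf}(Z\times\overline X)$, which is $G$-equivariant for the trivial cocycle because $\s_Z$, $j$ and $\O_{\overline Z/Z}(-i)$ all are. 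Proposition~\ref{prop_kernel}.3 again gives invariance.

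With all components of~\eqref{equation_decompblowup} invariant, I would apply Theorem~\ref{th_sodforequiv} to this decomposition, obtaining a semiorthogonal decomposition $\D^{\perf,G}(\overline X)=\langle\BB^{\perf}_1,\ldots,\BB^{\perf}_r\rangle$ with $\BB^{\perf}_k=\{F\in\D^{\perf,G}(\overline X)\mid\overline F\in\AA^{\perf}_k\}$, where the $\AA^{\perf}_k$ are the components of~\eqref{equation_decompblowup}. It remains to identify the $\BB^{\perf}_k$. Under the equivalence $\D^{\perf,G}(\overline X)\cong\D^{\perf}(\overline X)_{\TT_G}$ of Theorem~\ref{th_descentfor2}, the subcategory $\BB^{\perf}_k$ corresponds to $\AA^{\perf}_{k\TT_G}$ in the sense of Definition~\ref{def_dxt'}. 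The same equivariant kernel functors induce, by Proposition~\ref{prop_kernel}.3, fully faithful functors on the descent categories: $\s^*$ induces $\D^{\perf,G}(X)\cong\D^{\perf}(X)_{\TT_G}\ra\D^{\perf}(\overline X)_{\TT_G}$ with image $(\s^*\D^{\perf}(X))_{\TT_G}$, and $F\mapsto j_*(\O_{\overline Z/Z}(-i)\otimes\s_Z^*F)$ induces $\D^{\perf,G}(Z)\cong\D^{\perf}(Z)_{\TT_G}\ra\D^{\perf}(\overline X)_{\TT_G}$ with image $(\O_{\overline Z/Z}(-i)\otimes\s_Z^*\D^{\perf}(Z))_{\TT_G}$. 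Transporting these back through Theorem~\ref{th_descentfor2}, the component $(\s^*\D^{\perf}(X))_{\TT_G}$ is the equivariant subcategory $\s^*\D^{\perf,G}(X)$, equivalent to $\D^{\perf,G}(X)$, and each component $(\O_{\overline Z/Z}(-i)\otimes\s_Z^*\D^{\perf}(Z))_{\TT_G}$ is $\O_{\overline Z/Z}(-i)\otimes\s_Z^*\D^{\perf,G}(Z)$, equivalent to $\D^{\perf,G}(Z)$. This is the asserted decomposition.

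The principal obstacle is organizational rather than conceptual: one must verify carefully that blowing up an invariant smooth center is $G$-functorial, that $\overline Z\cong\P_Z(\NN_{Z/X})$ equivariantly with $\O_{\overline Z/Z}(1)$ a genuine untwisted $G$-equivariant bundle, and --- the crucial point --- that Orlov's fully faithful embeddings in~\eqref{equation_decompblowup} are of the kernel type required by Proposition~\ref{prop_kernel}, with kernels admitting the evident $G$-equivariant structures. Once these compatibilities are in place the conclusion follows formally from Theorems~\ref{th_sodforequiv} and~\ref{th_descentfor2} together with Proposition~\ref{prop_kernel}, in complete parallel with the projective bundle case.
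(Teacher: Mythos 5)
Your proposal is correct and follows essentially the same route as the paper: the paper's own (sketched) proof likewise treats each component of Orlov's decomposition~(\ref{equation_decompblowup}) as the image of a $G$-compatible kernel functor (with the untwisted equivariant kernels $\O_{\Gamma_\s}$ and $j_*(\O_{\overline Z/Z}(-i)\otimes\s_Z^*(-))$, the latter equivariant because $\NN_{Z/X}$ is), deduces invariance and the identification of the components from Proposition~\ref{prop_kernel}, and then applies Theorems~\ref{th_sodforequiv} and~\ref{th_descentfor2}. You in fact supply more of the routine verifications than the paper, which simply refers to the argument for Theorem~\ref{th_sodforprojbund}.
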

\begin{proof}
Let us sketch the proof, details are analogous to the proof of Theorem~\ref{th_sodforprojbund}.

The component $\s^*\D^{\perf}(X)$ of  decomposition~(\ref{equation_decompblowup}) is the image
of the fully faithful functor $$\s^*\colon \D^{\perf}(X)\ra \D^{\perf}(\overline X).$$ This functor is compatible with the actions of $G$ on $X$ and on $\overline X$, therefore the subcategory $\s^*\D^{\perf}(X)\subset \D^{\perf}(\overline X)$ is preserved by the action and the corresponding component of the semiorthogonal decomposition of
$\D^{\perf,G}(\overline X)$ is equivalent to $\D^{\perf,G}(X)$.

Similarly, the component $\O_{\overline Z/Z}(-i)\otimes \s_Z^*\D^{\perf}(Z)$
of  decomposition~(\ref{equation_decompblowup})
is the image of the fully faithful functor
$$\Psi_i=j_*(\O_{\overline Z/Z}(-i)\otimes \s_Z^*(-))\colon\D^{\perf}(Z)\ra \D^{\perf}(\overline X),$$
where $j$ denoted the embedding of $\overline Z$ into $\overline X$.
The normal bundle $\mathcal N_{Z/X}$  on~$Z$ is equivariant,
hence the bundles $\O_{\overline Z/Z}(i)$ on $\overline Z$ are also equivariant.
The functors~$\Psi_i$ are compatible with the actions of $G$ on $Z$ and on  $\overline X$, hence the components $\O_{\overline Z/Z}(-i)\otimes \s_Z^*\D^{\perf}(Z)$ of the decomposition~(\ref{equation_decompblowup}) are preserved by the action and the components $\O_{\overline Z/Z}(-i)\otimes \s_Z^*\D^{\perf,G}(Z)$
are equivalent to the category $\D^{\perf,G}(Z)$.
\end{proof}

\end{document}